\theoremstyle{plain}
\newtheorem{theorem}{Theorem}[section]
\newtheorem{lemma}[theorem]{Lemma}
\newtheorem{proposition}[theorem]{Proposition}
\newtheorem{remark}[theorem]{Remark}
\def\section{\@startsection{section}{1}%
  \z@{1.5\linespacing\@plus\linespacing}{.5\linespacing}%
  {\normalfont\bfseries\large\centering}}
\def\CC{{\mathbb C}}% complex numbers
\def\RR{{\mathbb R}}% real numbers
\def\NN{{\mathbb N}}% nonnegative integers
\def\ZZ{{\mathbb Z}}% integers
\def\({\left(}
\def\){\right)}
\def\<{\left\langle}
\def\>{\right\rangle}
\def\eps{\varepsilon}
\DeclareMathOperator{\RE}{Re}
\DeclareMathOperator{\IM}{Im}
\DeclareMathOperator{\DIV}{div}
\numberwithin{equation}{section}
\newcommand{\be}{\begin{equation}}
\newcommand{\ee}{\end{equation}}
\newcommand{\bea}{\begin{eqnarray}}
\newcommand{\eea}{\end{eqnarray}}
\newcommand{\bee}{\begin{eqnarray*}}
\newcommand{\eee}{\end{eqnarray*}}
\newcommand{\bse}{\begin{subequations}}
\newcommand{\ese}{\end{subequations}}
\def\eps{\varepsilon}
\def\pa{\partial}
\def\pe{{\Psi^{\eps}}}
\def\ae{{A^\eps}}
\def\se{{S^\eps}}
\def\ao{{A^0}}
\def\so{{S^0}}
\def\N{{\mathcal{N}}}
\begin{document}

\title{Uniformly accurate time-splitting methods for the semiclassical linear Schr\"{o}dinger equation}
\author[P. Chartier]{Philippe Chartier}
\address[P. Chartier]{INRIA Rennes, IRMAR and ENS Rennes, IPSO Project Team, Campus de Beaulieu, F-35042 Rennes}
\email{Philippe.Chartier@inria.fr}
\author[L. Le Treust]{Lo\"{i}c Le Treust}
\address[L. Le Treust]{IRMAR, Universit\'e de Rennes 1 and INRIA, IPSO Project}
\email{loic.le-treust@univ-amu.fr}
\author[F. M\'ehats]{Florian M\'ehats}
\address[F. M\'ehats]{IRMAR, Universit\'e de Rennes 1 and INRIA, IPSO Project}
\email{florian.mehats@univ-rennes1.fr}

\thanks{This work was supported by the ANR-FWF Project Lodiquas ANR-11-IS01-0003 and by the ANR project Moonrise ANR-14-CE23-0007-01.
}
\begin{abstract} This article is devoted to the construction of 
numerical methods which remain insensitive to the smallness of the semiclassical parameter for the linear Schr\"odinger equation in the semiclassical limit. We specifically analyse the convergence behavior of the first-order splitting. Our main result is a proof of  uniform accuracy. We illustrate the properties of our methods with simulations.
	%In passing, we introduce a new time-splitting method for the eikonal equation, whose precision is spectral in space and of second or fourth-order in time.
\end{abstract}

\keywords{Schr\"odinger equation, semiclassical limit, numerical simulation, uniformly accurate, Madelung transform, splitting schemes}
\subjclass{35Q55, 35F21, 65M99, 76A02, 76Y05, 81Q20, 82D50}
\maketitle
%\tableofcontents

%=====================================================================================================
% Section
%=====================================================================================================

\section{Introduction}
%This paper is the follow-up of a first part which introduces high-order uniformly accurate schemes in the non-linear case. 
We are concerned here with the uniformly accurate numerical approximation of the solution $\pe: \RR_+\times \RR^d\to \CC$, $d \geq 1$, of the {\em linear} Schrödinger equation in its semiclassical limit  
\be \label{eq:GPElinear}
i\eps \pa_t \pe =  - \frac{\eps^2}{2} \Delta \pe +\mathcal{V}\pe
\ee
where $\mathcal{V}$ is a smooth potential which does not depend on time.
The initial datum is assumed to be of the form
\be\label{initdataLS}
	\pe(0,\cdot) = A_0(\cdot)e^{iS_0(\cdot)/\eps} \quad \mbox{ with } \quad 
	\|A_0\|_{L^2(\RR^d)} = 1.
\ee
Note that the $L^2$-norm, the energy and the momentum of $\pe(0,\cdot)$, namely
\begin{eqnarray}\label{eq:invariants}
&\mbox{Mass: }&\|\pe(t,\cdot)\|_{L^2(\RR^d)}^2,\\
&\mbox{Energy: }&\int_{\RR^d}\left(\eps^2|\nabla\pe(t,x)|^2 + \mathcal{V}|\pe(t,x)|^2\right)dx,\\
&\mbox{Momentum: }&\eps\IM\int_{\RR^d}\overline{\pe(t,x)}\nabla\pe(t,x)dx,
\end{eqnarray}
are all preserved by the flow of %equation 
\eqref{eq:GPElinear}, whenever $\pe(0,\cdot)\in H^1(\RR^d)$. 

Owing to its numerous occurrences in a vast number of domains of applications in physics, equation \eqref{eq:GPElinear} has been widely studied (see for instance \cite{MR1335452,Pazy1986} and the references therein). In the semiclassical regime where the rescaled Planck constant $\eps$ is small, its asymptotic study allows for an appropriate description of the observables of $\pe$ through the laws of hydrodynamics. We refer to \cite{bookRemi} for a detailed presentation of the semiclassical analysis and to \cite{MR2805153} for a review of both theoretical and numerical issues.

%\begin{remark}
Let us also mention that we do not consider the case where the initial datas are Gaussian wave packets for which efficient schemes have already been  developed \cite{MR2247684,faou2009computing,Gradinaru2014}.
%\end{remark}

%
%
\subsection{Motivation}
Generally speaking, numerical methods for equation \eqref{eq:GPElinear} exhibit an error of size 
$\Delta t^p/\eps^r + \Delta x^q/\eps^s$, 
where $\Delta t$ and $\Delta x$ are the time and space steps and $p,q,r,s$ strictly positive numbers. For time-splitting methods for instance, the error on the wave function behaves like $\Delta x/\eps+\Delta t^p/\eps$ \cite{MR1880116,MR2739463}. Even if we content ourselves with observables\footnote{These authors performed extensive numerical tests in both linear and nonlinear cases \cite{MR1880116,MR2047194}.},
 the error of a splitting method of Bao, Jin, and Markowich \cite{MR1880116} grows like $\Delta x/\eps+\Delta t^p$. Now, 
achieving a fixed accuracy for varying values of $\eps$ requires to keep both ratios $\Delta t / \eps^{r/p}$ and $\Delta x/ \eps^{s/q}$ constant, and  becomes {\em prohibitively costly} when $\eps \to 0$. Our aim, in this article, is thus to develop {\em new} numerical schemes that are {\em Uniformly Accurate (UA)} w.r.t. $\eps$, i.e. whose accuracy does not deteriorate for vanishing $\eps$. In other words, schemes for which $r, s=0$. This seems highly desirable as all available methods with the exception of \cite{BCM2013}, namely finite difference methods \cite{Akrivis1993,DFP1981,KOAD1993,wu1996}, splitting methods \cite{MR1921908,MR2739463,MR3047949,MR2429878,PM1990,MR842641}, asymptotic splitting methods \cite{Bader2014,MR3562216}, relaxation schemes \cite{MR1785963} and symplectic methods \cite{MR967679} fail to be UA.

It is the belief of the authors that, prior to the construction of UA-schemes, it is necessary to reformulate \eqref{eq:GPElinear} as in \cite{BCM2013} and we now describe how this can be done.

\subsection{Reformulation of the problem}
In the spirit of the {\em Wentzel-Kramers-Brillouin (WKB)} techniques, we decompose $\pe$ as the product of a slowly varying amplitude and a fast oscillating factor\footnote{Considering 
the WKB-ansatz \eqref{eq:WKBansatz} transforms the invariants \eqref{eq:invariants} into respectively
	\be %\label{eq:invariantsWKB}
\|\ae\|_{L^2(\RR^d)}^2, \quad \int_{\RR^d}\left(|\eps\nabla\ae + i\ae\nabla\se|^2+ \mathcal{V}|\ae|^2\right)dx \; \mbox{ and } \; 
\IM\int_{\RR^d}\overline{\ae}\(\eps\nabla\ae + i\ae\nabla\se\)dx.
	\ee
}
\be\label{eq:WKBansatz}
	\pe(t,\cdot) = \ae(t,\cdot)e^{i\se(t,\cdot)/\eps}.
\ee
From this point onwards, various choices are possible, depending on whether $\ae$ is complex or not\footnote{The Madelung transform \cite{madelung} relates the semiclassical limit of \eqref{eq:GPElinear} to hydrodynamic equations 
	\begin{equation*}%\label{eq:madelung}
		\pe(t,\cdot) = \sqrt{\rho^\eps(t,\cdot)}e^{i\se(t,\cdot)/\eps}
	\end{equation*}
	and amounts to choosing $\ae\in \RR_+$.
	However, this formulation leads to both analytical and numerical difficulties 
	in the presence of vacuum, \textit{i.e.} whenever $\rho^\eps$ vanishes
	\cite{CDS2012,MR2375117}.
}: taking $\ae\in \CC$ leads to the following system \cite{bookRemi} 
\bse\label{eq:SCv1}
	\begin{align}
	&\label{eq:SCv1phase}\pa_t \se + \frac{|\nabla \se|^2}{2}  + \mathcal{V}= 0,\\
	&\label{eq:SCv1amplitude}\pa_t \ae +\nabla \se\cdot \nabla \ae +\frac{\ae}{2}\Delta \se
	 = \frac{i\eps\Delta \ae}{2}
	 \end{align}
\ese
with $ \se(0,\cdot) = S_0(\cdot)$ and $\ae(0,\cdot) = A_0(\cdot)$. Under appropriate smoothness assumptions, $(\ae, \se)  \in \CC \times \RR$ converges when  $\eps \to 0$ to the solution $(\ao, \so)$ of
\bse\label{eq:SCv2}
	\begin{align}
	&\label{eq:SCv2phase}\pa_t \so+ \frac{|\nabla \so|^2}{2}  + \mathcal{V}= 0,\\
	&\label{eq:SCv2amplitude}\pa_t \ao +\nabla \so\cdot \nabla \ao +\frac{\ao}{2}\Delta \so
	 = 0.
	 \end{align}
\ese
Notice that $(\rho, v) = (|\ao|^2,\nabla \so)$ is then solution of the Euler system %
\bse\label{eq:Euler}
	\begin{align}
	&\label{eq:Eulerphase}\pa_t v + v\cdot \nabla v  + \nabla\mathcal{V}= 0,\\
	&\label{eq:Euleramplitude}\pa_t \rho +\DIV(\rho v) = 0.
	 \end{align}
\ese

Now, an important drawback of \eqref{eq:SCv1} stems from the formation of caustics in finite time \cite{bookRemi}:  the solution of \eqref{eq:SCv1} may indeed cease to be smooth even though $\pe$ is globally well-defined for $\eps>0$. In order to obtain global existence for $\eps>0$, Besse, Carles and Méhats \cite{BCM2013} suggested an alternative formulation by introducing an asymptotically-vanishing viscosity term in the eikonal equation \eqref{eq:SCv1phase}. Therein, system \eqref{eq:SCv1} is replaced by
\bse\label{eq:SClinear}
	\begin{align}
	&\label{eq:SClinearphase}\pa_t \se + \frac{|\nabla \se|^2}{2}  + \mathcal{V}= \eps^{2}\Delta \se,\\
	&\label{eq:SClinearamplitude}\pa_t \ae +\nabla \se\cdot \nabla \ae +\frac{\ae}{2}\Delta \se
	 = \frac{i\eps\Delta \ae}{2} -i\eps\ae\Delta\se
	 \end{align}
\ese
where $\se(0,x) = S_0(x)$, $\ae(0,x) = A_0(x)$ and where $x\in \RR^d$.
Let us emphasize that both  \eqref{eq:SCv1} and \eqref{eq:SClinear} are equivalent to \eqref{eq:GPElinear} in the following sense: as long as the solution $(\se,\ae)$ of \eqref{eq:SCv1} (resp. \eqref{eq:SClinear}) is smooth, the function $\pe$ defined by \eqref{eq:WKBansatz} solves \eqref{eq:GPElinear}. The well-posedeness of \eqref{eq:SClinear} and the uniform control of the solutions with respect to $\eps$ are stated in Theorem \ref{thm:existencecontrole} below.
%The following existence and uniqueness result given from \cite{bookRemi}
%is thus perfectly satisfactory for our purpose (see also Section \ref{sec:thm:existencecontrole}). 
%%
%\begin{theorem}\label{thm:Carles}
%	Let $\eps_{\rm max}>0$. Assume that $(S_0,A_0)$ belongs to $H^{s+2}(\RR^d)\times H^s(\RR^d)$ where $s>d/2+1$. Then, there exist $T>0$, independent of $\eps \in(0,\eps_{\rm max}]$ and a unique solution
%	%
%	\[
%		(\se,\ae)\in C([0,T]; H^{s+2}(\RR^d)\times H^s(\RR^d))
%	\]
%	%
%	of system of equations \eqref{eq:SCv1}, resp. \eqref{eq:SClinear}. Moreover, $(\se,\ae)$ is bounded in 
%	\[
%		C([0,T]; H^{s+2}(\RR^d)\times H^s(\RR^d))
%	\]
%	 uniformly in $\eps$ and $\pe(t,\cdot) = \ae(t,\cdot)e^{i\se(t,\cdot)/\eps}$
%	is the unique solution of \eqref{eq:GPElinear} with the initial datum \eqref{initdataLS}.
%\end{theorem}
%%%%%%%%

The main advantage of the WKB reformulation \eqref{eq:SClinear} over \eqref{eq:GPElinear} is apparent: the semiclassical parameter $\eps$ does not give rise to singular perturbations\footnote{
The Cole-Hopf transformation \cite[Section $4.4.1$]{evans1998partial}
\begin{eqnarray} \label{rem:cole-hopf}
w^\eps = \exp\(-\frac{\se}{2\eps^{2}}\) - 1
\end{eqnarray}
transforms \eqref{eq:SClinearphase} into $\pa_tw^\eps - \frac{\mathcal{V}}{2\eps^2}(w^\eps+1) = \eps^2\Delta w^\eps$ 
for which the regularizing effect of the viscosity term becomes arguably more apparent.}. 
Hence, it constitutes a good basis for the development of UA schemes (at least prior to the appearance of caustics), as witnessed by the methods introduced later in this paper.  

\subsection{Construction of the schemes}\label{sec:schemeconstr}
First and only (up to our knowledge) UA schemes are based on the formulation \eqref{eq:SClinear} introduced in \cite{BCM2013}. Nevertheless, these schemes are still subject to CFL stability conditions and are of low order in time and space. In this paper, we consider, in lieu of finite differences as in \cite{BCM2013}, time-splitting methods, for they enjoy the following favorable features: 
\begin{enumerate}[(i)]
	\item they do not suffer from stability restrictions on the time step;
	\item they are easy to implement;
	\item they preserve exactly the $L^2$-norm;
	\item \label{pt:nonlinear} they can be adapted to semilinear Schrödinger equations;
	\item  \label{pt:high} they can be composed to attain high-order of convergence in time while remaining spectrally convergent in space.
\end{enumerate}

	Points \eqref{pt:nonlinear} and \eqref{pt:high} will be addressed in a forthcoming work using complex time steps (see \cite{BCCM11-2}), while, in this paper,  we introduce first and second order in time splitting-schemes and concentrate on the numerical analysis of the first-order one for the sake of clarity.

	System \eqref{eq:SClinear} is split into four pieces as follows:
		\smallskip
	
	\noindent
	{\em First flow: }  We denote $\varphi^1_h$  the approximate flow at time $h\in \RR$ of the system 
	\bse\label{eq:SCflot1}
		\begin{align}
		&\label{eq:SCflot1phase}\pa_t S + \frac{|\nabla S|^2}{2}  = 0,\\
		&\label{eq:SCflot1amplitude}\pa_t A +\nabla S\cdot \nabla A +\frac{A}{2}\Delta S
	 	= \frac{i\Delta A}{2}.
		\end{align}
	\ese
	The eikonal equation \eqref{eq:SCflot1phase} is solved by means of the method of characteristics, while equation \eqref{eq:SCflot1amplitude} is dealt with by noticing that $w = A \exp\(iS\)$ satisfies the free Schr\"odinger equation $i\pa_t w = -\frac{1}{2}\Delta w$. 
		\smallskip
	
	\noindent
	{\em Second flow: } We define $\varphi^2_h$ as the exact flow at time $h\in\RR$ of the system
	\bse\label{eq:SCflot2}
		\begin{align}
		&\label{eq:SCflot2phase}\pa_t S = 0,\\
		&\label{eq:SCflot2amplitude}\pa_t A 
	 	= \frac{i\(\eps-1\)\Delta A}{2},
		\end{align}
	\ese
	which is solved in the Fourier space.
		\smallskip
	
	\noindent
	{\em Third flow: }   The third flow $\varphi^3_h$ is defined as the exact flow at time $h\in\RR$ of system
	\bse\label{eq:SCflot3linear}
		\begin{align}
		&\label{eq:SCflot3linearphase}\pa_t S  = -\mathcal{V}, \\
		&\label{eq:SCflot3linearamplitude}\pa_t A  =0.
		\end{align}
	\ese
	\smallskip
	
	\noindent
	{\em Fourth flow: }   The fourth flow $\varphi^4_h$ is defined as the exact flow at time $h\in\RR_+$ of 
	\bse\label{eq:SCflot4}
		\begin{align}
		&\label{eq:SCflot4phase}\pa_t S  = \eps^2\Delta S, \\
		&\label{eq:SCflot4amplitude}\pa_t A  =-i\eps A\Delta S.
		\end{align}
	\ese
	Equation \eqref{eq:SCflot4phase} is solved in Fourier space and the solution of \eqref{eq:SCflot4amplitude} is simply obtained through the formula
$A(h,\cdot) = \exp\(-i\eps^{-1}(S(h,\cdot)-S(0,\cdot))\)A(0,\cdot)$. 
	Notice that $\varphi^4_h$ can thus be viewed as a regularizing flow. \\
	%
%	Let us notice that the three flows $\varphi^1_h$, $\varphi^2_h$ and $\varphi^3_h$ render $S$ a real-valued (note that we have taken the real part in \eqref{eq:schemephase1} and in \eqref{eq:schemephase2}). Hence, \eqref{eq:SCflot3} is such that $\pa_t |\a|^2 = 0$ and the solution of \eqref{eq:SCflot3} reads
%	\begin{align*}
%		& \s(h,\cdot) = \s(0,\cdot) - h|\a(0,\cdot)|^2\\
%		& \a(h,\cdot) = \exp\(-i\eps(1-\eps)\(h\Delta\s(0,\cdot) - \frac{h^2\Delta|\a(0,\cdot)|^2}{2}\)\)\a(0,\cdot).
%	\end{align*}

%		\begin{remark}\label{rem:defprop}
%		Let us stress that $\varphi^4_h$ is not defined for $h$ such that $\RE h<0$. As a matter of fact, the propagator $e^{z\Delta}$ is well-defined, in the distributional sense, if and only if $\RE(z)\geq0$.
%		\end{remark}
	%
	%
	%
	%\subsubsection{Splitting schemes for system of equations  \eqref{eq:SC}}
	%
	%
	%
	\smallskip
The first-order scheme that we consider for  \eqref{eq:SClinear} is then the concatenation of all previous flows
	\be\label{scheme1}
		 \varphi^1_h\circ\varphi^2_h\circ\varphi^3_h\circ \varphi^4_h
	\ee
	while the second-order scheme is given by
	\be\label{scheme2}
		 \varphi^1_{h/2}\circ\varphi^2_{h/2}\circ\varphi^3_{h/2}\circ \varphi^4_h\circ\varphi^3_{h/2}\circ\varphi^2_{h/2}\circ\varphi^1_{h/2}.
	\ee	

    	\subsection{Main result}
	 The main result of this paper is the following theorem: it  states that $\varphi^1_h\circ\varphi^2_h\circ\varphi^3_h\circ \varphi^4_h$  is uniformly accurate w.r.t. the semi-classical parameter $\eps$. The proper statement of the result uses the norm $\|\cdot\|_{s}$ on the set  $\Sigma_s = H^{s+2}(\RR^d)\times H^s(\RR^d)$ defined 	for $s\geq0$ and $u = (S,A)$ by
	
    	\[
        		\left\|u\right\|_s = \(\|S\|_{H^{s+2}(\RR^d)}^2 + \|A\|_{H^s(\RR^d)}^2\)^{1/2}.
    	\]

	 \begin{theorem}\label{thm:conv}
		 Let $s>d/2+1$, $\eps_{max}>0$, $u_0\in\Sigma_{s+2}$ and $0<T<T_{max}$ where
		 \[
            		T_{max} = \sup\{t>0 : \tau \mapsto \phi^0_\tau(u_0)\in L^\infty([0,t];\Sigma_{s+2})\}
        		\]
		and 
		$\phi_\tau^\eps$ denotes the flow at time $\tau$ of \eqref{eq:SClinear}.
        %is well-defined and positive.
        %$($see \eqref{eq:Tmax}$)$. 
		 There exists $C>0$  and $h_0>0$ such that the following error estimate holds true for any $\eps\in(0,\eps_{max}]$, any $h\in[0,h_0]$ and $n\in\NN$ satisfying $nh\leq T$:
		\[
			\begin{split}
				&\|(\varphi^1_h\circ\varphi^2_h\circ\varphi^3_h\circ \varphi^4_h)^n(u_0)-\phi^\eps_{nh}(u_0)\|_{s}\leq Ch.
			\end{split}
		\]
		The constants $C$ and $h_0$ do not depend on $\eps$.
	 \end{theorem}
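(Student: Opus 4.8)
The plan is to follow the classical Lady Windermere's fan argument for splitting methods: bound the error by the accumulated local errors, propagated by the exact flow, using the stability of $\phi^\eps_\tau$ on $\Sigma_s$. First I would establish uniform bounds: since $u_0 \in \Sigma_{s+2}$ and $T < T_{max}$, the exact solution $\tau \mapsto \phi^\eps_\tau(u_0)$ stays in a bounded set $B \subset \Sigma_{s+2}$ uniformly for $\eps \in (0,\eps_{max}]$ and $\tau \in [0,T]$ (this uniform-in-$\eps$ well-posedness is exactly what Section~\ref{sec:thm:existencecontrole} provides). Then I would show that each of the four flows $\varphi^i_h$ maps a neighbourhood of $B$ into a slightly larger neighbourhood, with constants independent of $\eps$ and $h \le h_0$, and that the numerical flow $\Phi_h = \varphi^1_h\circ\varphi^2_h\circ\varphi^3_h\circ\varphi^4_h$ is Lipschitz on such a neighbourhood in the $\|\cdot\|_s$ norm, again uniformly in $\eps, h$.

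The core estimate is the local error: I would prove that for $v$ in the relevant bounded set of $\Sigma_{s+2}$,
\[
\|\Phi_h(v) - \phi^\eps_h(v)\|_s \le C h^2,
\]
with $C$ independent of $\eps$. The natural route is a Taylor/Lie-derivative expansion of both $\Phi_h(v)$ and $\phi^\eps_h(v)$ around $h=0$: both agree to order $h$ because the vector field of \eqref{eq:SClinear} is the sum of the four vector fields generating $\varphi^1,\dots,\varphi^4$ (one checks that \eqref{eq:SCflot1}+\eqref{eq:SCflot2}+\eqref{eq:SCflot3linear}+\eqref{eq:SCflot4} reconstitutes \eqref{eq:SClinear}, with the $\eps^2\Delta S$ and the $i\eps\Delta A/2 - i\eps A\Delta S$ terms correctly distributed), so the $O(h^2)$ remainder is governed by commutators of these vector fields and by their second derivatives along the flows. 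The delicate point is that several of these terms carry negative powers of $\eps$ hidden in $\varphi^4_h$ — the amplitude update $A(h) = \exp(-i\eps^{-1}(S(h)-S(0)))A(0)$ looks singular — but since $S(h)-S(0) = O(\eps^2 h)$ for the flow \eqref{eq:SCflot4phase}, the exponent is in fact $O(\eps h)$, so this flow is $\eps$-close to the identity and contributes harmlessly; similarly $\varphi^2_h$ is $O((\eps-1)h) = O(h)$-close to the free-Schrödinger-type evolution. One must track all such cancellations to confirm that no uncompensated $\eps^{-1}$ or $\eps^{-2}$ survives in the local error constant; this is the main obstacle.

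Granting the uniform local error bound and the uniform Lipschitz stability of $\phi^\eps$ on $\Sigma_s$ (which follows from the same energy estimates, applied to the difference of two solutions, losing at most the two extra derivatives of regularity — this is why the hypothesis is $u_0 \in \Sigma_{s+2}$ while the conclusion is in $\|\cdot\|_s$), the telescoping identity
\[
\Phi_h^n(u_0) - \phi^\eps_{nh}(u_0) = \sum_{k=0}^{n-1} \phi^\eps_{(n-1-k)h}\big(\Phi_h^{k+1}(u_0)\big) - \phi^\eps_{(n-k)h}\big(\Phi_h^{k}(u_0)\big)
\]
gives, after inserting $\pm\phi^\eps_{(n-1-k)h}(\phi^\eps_h(\Phi_h^k(u_0)))$ in each summand and using $nh\le T$,
\[
\|\Phi_h^n(u_0) - \phi^\eps_{nh}(u_0)\|_s \le e^{LT}\sum_{k=0}^{n-1}\|\Phi_h(\Phi_h^k(u_0)) - \phi^\eps_h(\Phi_h^k(u_0))\|_s \le e^{LT}\cdot n \cdot Ch^2 \le C' T h,
\]
provided one also checks by a discrete Gronwall/bootstrap argument that the iterates $\Phi_h^k(u_0)$ remain in the bounded set on which the local error and stability estimates hold, for $nh\le T$ and $h\le h_0$ small enough. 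Choosing $h_0$ so that this invariant region is not exited closes the argument, and all constants produced this way depend only on $T$, $\eps_{max}$, $s$ and the bounds on the exact solution, not on $\eps$ or $h$.
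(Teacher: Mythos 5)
Your overall architecture (uniform bounds, $O(h^2)$ local error uniform in $\eps$, stability, accumulation) matches the paper's, and your observation that the phase increment in $\varphi^4_h$ is $O(\eps^2 h)$ so that the amplitude factor $\exp(-i\eps^{-1}(S(h)-S(0)))$ is harmless is exactly the kind of cancellation the paper exploits. But the accumulation step as you wrote it has a genuine gap. Your telescoping sum requires applying the exact flow $\phi^\eps_\tau$ for times $\tau=(n-1-k)h$ up to $T$ \emph{starting from the numerical iterates} $\Phi_h^{k+1}(u_0)$, and then a Lipschitz bound for $\phi^\eps_\tau$ over $[0,T]$ on a neighbourhood of the trajectory. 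Neither is available here: the system \eqref{eq:SClinear} is only locally well-posed (the eikonal part can form caustics, which is why $T_{max}$ is defined along the exact trajectory from $u_0$ only), so $\phi^\eps_{(n-1-k)h}(\Phi_h^{k+1}(u_0))$ need not exist; and the stability estimate one can actually prove (Lemma \ref{lem:1}) is \emph{asymmetric} — it gives Lipschitz dependence in $\Sigma_s$ only when one of the two solutions being compared is controlled in the stronger norm $\Sigma_{s+1}$ on the whole time interval. In your fan, both arguments of the long-time flows are numerical iterates, and you have no uniform $\Sigma_{s+1}$ control of the exact flow emanating from them over times of order $T$; trying to get it by "closeness to the exact solution" becomes circular unless organized carefully.

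The paper resolves exactly this by replacing the fan with a one-step induction: at step $k$ it compares $\phi_{h(k+1)}(u_0)=\phi_h(\phi_{hk}(u_0))$ with $\phi_h((\varphi^{1234}_h)^k(u_0))$, i.e.\ the exact flow is applied for a \emph{single} step $h$ from the numerical iterate (existence over $[0,h]$ from Lemma \ref{lem:0}), and Lemma \ref{lem:1} is invoked with $u_1=\phi_{hk}(u_0)$ — the exact trajectory, which is bounded in $\Sigma_{s+1}$ uniformly in $\eps$ by Theorem \ref{thm:existencecontrole} — playing the role of the more regular solution. This yields the recursion $e_{k+1}\le e^{C'h}e_k+K_4 h^2$, whose geometric sum gives $Ch$; the induction simultaneously propagates a $\Sigma_{s+2}$ bound on the iterates (Lemma \ref{lem:2}) so that the local error constant stays uniform. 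You should also note that your local error bound is asserted rather than proved: you correctly identify tracking the $\eps$-dependence of the commutators as "the main obstacle", but the theorem's content lives precisely there, and the paper devotes Lemmas \ref{lem:flotdiff}--\ref{lem:commutateurs} (defect representation of $\pa_h\mathscr{R}$ via the $\chi_{ij}$ and explicit computation of $[\N_i,\N_j]$) to establishing it.
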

	 \begin{remark}
		The constant $T_{max}$ appearing in Theorem \ref{thm:conv} is well-defined and positive since $\tau\mapsto \phi^0_\tau$ exists locally in time (see Theorem \ref{thm:existencecontrole}). 
	 \end{remark}
	 \begin{remark}
		The numerical analysis performed for the proof of Theorem \ref{thm:conv} can immediately be extended after the caustics for  $T_{max}\leq T$ and $\eps>0$ since the solution of \eqref{eq:SClinear} (as the one of \eqref{eq:GPElinear}) are global. Nevertheless, the constants $C$ and $h_0$ appearing in the result will not be independent on $\eps$ anymore. This point is illustrated in Section \ref{sec:num}.
	 \end{remark}
	  \begin{remark}
		The proof of Theorem \ref{thm:conv} can be adapted to any time-splitting method of order $1$ obtained after permutation of the four flows in \eqref{scheme1}. 
	 \end{remark}
	 Our proof is reminiscent of two previous results related to, on the one hand, splitting schemes for equations with Burgers nonlinearity \cite{2012_Holden_Lubich_Risebro} and on the other hand, splitting scheme for NLS in the semiclassical limit with \cite{MR3138106}.  Nonetheless, due to the finite-time existence of both exact and approximate flows, and to the peculiarity of the Lipschitz-type stability of the exact flows (see Lemma \ref{lem:1}), our proof follows a different path. In particular, we lean the approximate solutions on the exact one to  ensure that they do not  blow up. Besides, the application of Lady Windermere's fan argument is somehow hidden in an induction procedure. Finally, let us mention that, in spite of the fact that we do not specifically address this case, it is our belief that this result  
can be extended to Schrödinger equations with time dependent potentials, to the second-order scheme, to the Schrödinger equation with a nonlinearity of Hartree-type and to the weakly nonlinear Schrödinger equation (see also \cite[Remark  4.5]{MR3138106}).

The paper is organized as follows. In Section \ref{sec:proof}, we first give a theorem of well-posedness of the Cauchy problem for \eqref{eq:SClinear}. Then Theorem \ref{thm:conv} is proved using four technical lemmas. Section \ref{sect:proofs} is devoted to the proof of these lemmas. We illustrate the properties of our methods in Section \ref{sec:num}.
%
%=================================================================
%=================================================================
\section{Preparatiory results and proof of Theorem \ref{thm:conv}}%Numerical study of the scheme}
\label{sec:proof}%}
	\subsection{Notations}
	Assume that $\eps\in(0,\eps_{max}]$ and $s>d/2+1$. For the sake of simplicity, we keep the notation of all the flows independent of $\eps$. All the constants appearing in the proof depend on $\mathcal{V}$ but not on $\eps>0$.
	We denote 
	\[\begin{split}
		&\varphi^{ij}_h = \varphi^i_h\circ\varphi^j_h,\; \varphi^{ijk}_h = \varphi^{i}_h\circ\varphi^{jk}_h,\; 
		\\
		&\varphi^{1234}_h = \varphi^{1}_h\circ\varphi^{234}_h = \varphi^1_h\circ\varphi^2_h\circ\varphi^3_h\circ \varphi^4_h,
		\end{split}
	\]
	$\N_i$ is the possibly nonlinear operator related to $\varphi^i_h$ so that 
	\[
	\pa_h \varphi^i_h = \N_i \varphi^i_h.
	\]
	The quantities $\partial_h\varphi_h(u)$ and $\partial_2\varphi_h(u) $ are the Fréchet derivatives of $\varphi$ with respect  to $h$ and $u$.
	The commutator of the nonlinear operators $\N_i$ and $\N_j$ is given by
	\[
		[\N_i,\N_j](u) = D\N_i(u)\cdot \N_j(u) - D\N_j(u)\cdot \N_i(u).
	\]
	\subsection{Existence, uniqueness  and uniform boundedness results}
	The following theorem study some properties of the solutions of equations \eqref{eq:SClinear}.% that will be used in the proof.
	 \begin{theorem}\label{thm:existencecontrole}
    		Let $\eps_{max}>0$, $s>d/2+1$ and $u_0\in\Sigma_{s+2}$. The following two points are true.
    		\begin{enumerate}[(i)]
        		\item The quantity \be\label{eq:Tmax}
            	T_{max} = \sup\{t>0 :\phi^0(u_0)\in L^\infty([0,t];\Sigma_{s+2})\}
        		\ee
        		is well-defined and positive.
        		\item Let $0<T<T_{max}$. For all $\eps\in[0,\eps_{max}]$, there exists a unique  solution
       		 \[
           		\phi^\eps(u_0)\in C([0,T], \Sigma_{s+2})
        		\]
        	 	of the systems of equations \eqref{eq:SClinear}. Moreover, $\phi^\eps(u_0)$ is bounded in
        		\[
            		C([0,T],\Sigma_{s+2})
       		 \]
     		uniformly in $\eps\in[0,\eps_{max}]$.
    		\end{enumerate}
    	\end{theorem}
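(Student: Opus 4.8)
The plan is to treat the two points by a standard energy-method / fixed-point argument, paying attention to uniformity in $\eps$. For point (i), one first observes that for $\eps=0$ system \eqref{eq:SClinear} reduces to a quasilinear symmetric-hyperbolic-type system (the eikonal/transport pair \eqref{eq:SCflot1phase}--\eqref{eq:SCflot1amplitude} with the Laplacian terms switched off, i.e. \eqref{eq:SClinear} at $\eps=0$), for which local well-posedness in $\Sigma_{s+2}=H^{s+4}\times H^{s+2}$ follows from the classical theory of quasilinear systems: the map $S\mapsto |\nabla S|^2/2$ and the coupling $\nabla S\cdot\nabla A + \tfrac{A}{2}\Delta S$ are smooth in the right spaces once $s>d/2+1$ (so that $H^{s+1}\hookrightarrow W^{1,\infty}$ and the space is an algebra). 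A Picard iteration on a short time interval, with the energy estimate $\tfrac{d}{dt}\|\phi^0_t(u_0)\|_{s+2}^2 \lesssim \|\phi^0_t(u_0)\|_{s+2}^2(1+\|\phi^0_t(u_0)\|_{s+2})$ obtained by differentiating $s+4$ (resp. $s+2$) times and commuting derivatives past the transport operator, gives existence on $[0,t]$ for $t$ small; $T_{max}$ is then the supremum of such times and is positive by construction.

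For point (ii), I would set up the same scheme for general $\eps\in[0,\eps_{max}]$ and show that the extra terms $\eps^2\Delta S$, $\tfrac{i\eps}{2}\Delta A$, $-i\eps A\Delta S$ do not destroy the estimate, \emph{uniformly} in $\eps$. The key point is that the two genuinely delicate terms are skew-adjoint or can be made so up to lower order: $i\eps\Delta A/2$ is skew-adjoint in $L^2$ after multiplying \eqref{eq:SClinearamplitude} by $\bar A$ and taking real parts (it contributes nothing to $\tfrac{d}{dt}\|A\|_{L^2}^2$, and after applying $\partial^\alpha$ the commutator $[\partial^\alpha,\eps\Delta]=0$ since constant-coefficient), while $\eps^2\Delta S$ in the phase equation is a \emph{good} parabolic-type term — when one does the $H^{s+4}$ estimate for $S$ it produces $-\eps^2\|\nabla\partial^\alpha S\|_{L^2}^2\le 0$ on the left, which one simply discards. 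The term $-i\eps A\Delta S$ is lower order: $\|\eps A\Delta S\|_{H^{s+2}}\lesssim \eps\,\|A\|_{H^{s+2}}\|S\|_{H^{s+4}}$ using the algebra property, with a constant independent of $\eps$, hence absorbed into the Gronwall bound. Thus one obtains, on any $[0,T]$ with $T<T_{max}$, an estimate $\|\phi^\eps_t(u_0)\|_{s+2}\le R(T)$ with $R(T)$ depending only on $\|u_0\|_{s+2}$, $T$ and $\mathcal V$, not on $\eps$; uniqueness follows from the same computation applied to the difference of two solutions (this is linear in the difference, so a plain Gronwall suffices).

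The main subtlety — which I expect to be the real obstacle — is not the existence on a \emph{short} $\eps$-dependent interval (that is routine) but propagating the bound up to a time $T$ that is \emph{independent of $\eps$}, namely any $T<T_{max}$ where $T_{max}$ is defined purely through the limit flow $\phi^0$. The natural way is a continuation/bootstrap argument: fix $T<T_{max}$, let $R_0=\sup_{[0,T]}\|\phi^0_t(u_0)\|_{s+2}+1$, and show by a continuity argument in $t$ that $\|\phi^\eps_t(u_0)\|_{s+2}$ cannot exceed, say, $R_0+1$ on $[0,T]$ provided $\eps$ is small — but since we want \emph{all} $\eps\in[0,\eps_{max}]$, not just small $\eps$, one instead runs the $\eps$-uniform energy estimate directly: the differential inequality $\tfrac{d}{dt}y \le C_1 y + C_2 y^{3/2}$ with $C_1,C_2$ independent of $\eps$ has a solution blowing up no sooner than a time $T^\star$ independent of $\eps$, and by comparing with the $\eps=0$ case (where the solution genuinely exists up to $T_{max}$) one checks $T^\star\ge T_{max}$, or more cleanly, one shows the $\eps$-solutions exist and stay bounded on exactly the maximal interval of the majorant ODE, which contains $[0,T]$. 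Care must be taken that the constants in the nonlinear estimates ($H^s$ algebra, Moser-type inequalities, commutator estimates) depend only on $s$ and $d$, so that the whole chain is genuinely $\eps$-free; this bookkeeping is the part that requires attention rather than ingenuity.
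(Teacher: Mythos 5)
Your energy estimates are essentially the right ones (they are the content of Lemmas \ref{eq:burger}, \ref{eq:transport} and \ref{lem:0}), but the decisive step --- propagating the uniform bound from a short, norm-dependent interval up to an \emph{arbitrary} $T<T_{max}$ --- is where your argument breaks. The majorant ODE $\dot y\le C_1y+C_2y^{3/2}$ blows up at a time $T^\star$ determined by $\|u_0\|_{s+2}$ and the Moser constants, whereas $T_{max}$ is determined by the actual lifespan of $\phi^0(u_0)$; nothing forces $T^\star\ge T_{max}$, and iterating the local existence time with a doubled norm loses geometrically, so the pure energy route cannot reach all of $[0,T]$. The resolution is the one you dismiss too quickly: prove a Lipschitz-type stability estimate in $\eps$ (Lemma \ref{lem:stabaux}, leading to \eqref{eq:inequSClinear}), deduce $\|\phi^\eps_t(u_0)-\phi^0_t(u_0)\|_{s}\le C\eps$, and lean the $\eps$-solutions on $\phi^0$ by induction over finitely many subintervals of uniform length $h_1\bigl(2\sup_{[0,T]}\|\phi^0_t(u_0)\|_{s}\bigr)$, so the norm never leaves a fixed ball for $\eps\le\eps_0$; the remaining compact range $\eps\in[\eps_0,\eps_{max}]$ is covered by continuity in $\eps$ together with the fact that for each fixed $\eps>0$ the solution is global. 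Your worry that smallness of $\eps$ is not enough is thus answered by compactness, not by the ODE comparison.

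A second, lesser gap is the existence mechanism for $\eps>0$: a Picard iteration on \eqref{eq:SClinear} does not close in $\Sigma_{s+2}$, because the transport terms lose a derivative relative to the group $e^{i\eps t\Delta/2}$ (the classical quasilinear Schr\"odinger obstruction). The paper sidesteps this entirely: $\ae e^{i\se/\eps}$ solves the globally well-posed equation \eqref{eq:GPElinear}, and the Cole--Hopf transform linearizes the viscous eikonal equation \eqref{eq:SClinearphase}, so for $\eps>0$ existence is free and only the $\eps$-uniform bounds require work; $\phi^0$ is then obtained as the $\eps\to0$ limit via the same stability estimate. Your direct hyperbolic construction of $\phi^0$ is legitimate, but since the stability estimate is indispensable for the propagation step anyway, the paper's route is both shorter and self-contained.
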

	 The proof of Theorem \ref{thm:existencecontrole} is given in Section \ref{sec:thm:existencecontrole}.
	\subsection{The main lemmas}
	In this subsection, we present the main ingredients needed in the proof of Theorem \ref{thm:conv}. Their proof is postponed to Section \ref{sect:proofs}. 
	\begin{lemma}\label{lem:0}
		Let $M>0$ and $s>d/2+1$. There exist $h_1 = h_1(M)>0$ such that for any $\eps\in(0,\eps_{max}]$ and any $u_0\in \Sigma_{s}$ satisfying 
		\[
			\|u_0\|_{{s}}\leq M,
		\]
		we have that the solution $\phi_t(u_0)$ of equation \eqref{eq:SClinear} is well-defined on $[0,h_1]$ and for all $t\in[0,h_1]$
		\[
			\|\phi_t(u_0)\|_{{s}}\leq 2M.
		\]
	\end{lemma}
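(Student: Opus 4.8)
The statement is a standard short-time existence and a-priori bound for the system \eqref{eq:SClinear}, with the crucial point that the bound and the lifespan $h_1$ depend only on the size $M$ of the data and are independent of $\eps\in(0,\eps_{max}]$. The plan is to run a classical energy estimate for the coupled system on $\Sigma_s = H^{s+2}\times H^s$, combined with a continuity (bootstrap) argument. First I would recall that local well-posedness in $\Sigma_s$ for fixed $\eps>0$ is classical (the $\eps$-dissipative terms $\eps^2\Delta S$ in \eqref{eq:SClinearphase} and $\tfrac{i\eps}{2}\Delta A$ in \eqref{eq:SClinearamplitude} only help, and the transport/Burgers-type terms are handled by standard quasilinear theory since $s>d/2+1$), so a solution exists on some maximal interval; the content is the uniform bound. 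Let $y(t)=\|\phi_t(u_0)\|_s^2 = \|S(t)\|_{H^{s+2}}^2 + \|A(t)\|_{H^s}^2$ and differentiate in time.

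The key computation is the differential inequality. For the phase equation, apply $\langle D\rangle^{s+2}$, pair with $\langle D\rangle^{s+2}S$ in $L^2$: the term $\eps^2\langle \Delta \langle D\rangle^{s+2}S,\langle D\rangle^{s+2}S\rangle = -\eps^2\|\nabla \langle D\rangle^{s+2}S\|_{L^2}^2 \le 0$ is favorable and can be dropped; the nonlinear term $\langle \nabla(|\nabla S|^2/2),\ldots\rangle$ is estimated by the Moser/commutator inequality $\|[\langle D\rangle^{s+1},\nabla S\cdot\nabla]S\|_{L^2}\lesssim \|\nabla S\|_{H^{s+1}}^2$ (valid for $s+1>d/2+1$, i.e. $s>d/2$), and $\|\mathcal V\|_{H^{s+2}}$ contributes a fixed constant $c_{\mathcal V}$. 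For the amplitude equation, apply $\langle D\rangle^s$, pair with $\langle D\rangle^s A$: the terms $\tfrac{i\eps}{2}\langle \Delta\langle D\rangle^s A,\langle D\rangle^s A\rangle$ and $-i\eps\langle \langle D\rangle^s(A\Delta S),\langle D\rangle^s A\rangle$ — after moving one derivative off $\Delta S$ — are controlled because they are purely imaginary up to commutators and lower-order terms; the transport term $\nabla S\cdot\nabla A$ and the $\tfrac{A}{2}\Delta S$ term are handled by the same Moser estimates, producing a bound $\lesssim \|S\|_{H^{s+2}}\|A\|_{H^s}^2 + \|A\|_{H^s}(1+\|S\|_{H^{s+2}})\|A\|_{H^s}$, using that $H^s$ and $H^{s+1}$ are algebras for $s>d/2$. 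Collecting everything, one obtains
\[
	\frac{d}{dt}\,y(t) \le C_0\big(1 + y(t)\big)^{3/2}
\]
for a constant $C_0$ depending only on $\mathcal V$ and $d$, uniformly in $\eps\in[0,\eps_{max}]$ (the $\eps$-terms having been discarded or absorbed).

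Finally I would close the argument by a standard ODE comparison: the scalar problem $z'=C_0(1+z)^{3/2}$, $z(0)=M^2$, has a solution that stays below, say, $4M^2$ on an interval $[0,h_1]$ with $h_1 = h_1(M)>0$ explicit, and a continuity argument (the set of times where $y(t)\le 4M^2$ is nonempty, closed, and open in $[0,\min(h_1,T_{\max}^\eps))$) shows that the $\Sigma_s$-norm cannot reach $\sqrt{4M^2}=2M$ before $h_1$; in particular the maximal solution extends past $h_1$ and satisfies $\|\phi_t(u_0)\|_s\le 2M$ there. The main obstacle — and the only place where care is genuinely needed — is verifying that the $\eps$-dependent terms, especially $-i\eps A\Delta S$ in the amplitude equation (which carries two derivatives on $S$ but is multiplied by $A$, not differentiated), do not destroy the uniformity: one must integrate by parts so that at most $s+1$ derivatives ever fall on $S$ when tested against $H^s\times H^s$, exploiting the skew-symmetry of the leading $i\eps\Delta$ piece, so that the net contribution is $O(\eps\,\|S\|_{H^{s+2}}\|A\|_{H^s}^2)$ with the $\eps$ harmlessly bounded by $\eps_{max}$; this is precisely why one works in $\Sigma_{s}$ with the two-derivative gap between the $S$ and $A$ components.
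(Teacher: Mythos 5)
Your proposal is correct and follows essentially the same route as the paper: an energy estimate in $\Sigma_s$ via Kato--Ponce/Moser commutator bounds, discarding the dissipative $\eps^2\Delta S$ contribution and exploiting the skew-symmetry of $\tfrac{i\eps}{2}\Delta$ and of multiplication by $-i\eps\Delta S$ so that only commutators (hence $O(\|S\|_{H^{s+2}}\|A\|_{H^s}^2)$ terms) survive, leading to a Riccati-type differential inequality $\pa_t\|\phi_t(u_0)\|_s\le c_1\|\mathcal V\|_{H^{s+2}}+c_2\|\phi_t(u_0)\|_s^2$ closed by ODE comparison. The only cosmetic difference is that the paper obtains existence for fixed $\eps$ concretely (Cole--Hopf for the phase, equivalence with the linear Schr\"odinger equation for the amplitude) rather than citing general quasilinear theory, but this does not affect the uniform bound, which is the substance of the lemma.
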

	\begin{lemma}\label{lem:1}
		Let $M>0$ and $s>d/2+1$. There exist $C_2 = C_2(M)>0$ such that for any $\eps\in(0,\eps_{max}]$, any solutions $\phi_t(u_1)\in L^\infty([0,T],\Sigma_{s+1})$ and $\phi_t(u_2)\in L^\infty([0,T],\Sigma_{s})$ of equation \eqref{eq:SClinear}, satisfying for all $t\in[0,T]$
		\[
			\| \phi_t(u_1)\|_{{s+1}}+ \|\phi_t(u_2)\|_{{s}}\leq M
		\]
		we have
		\[
			\|\phi_t(u_1)-\phi_t(u_2)\|_{{s}}\leq \| u_1-u_2\|_{{s}} \exp(C_2t).
		\]
	\end{lemma}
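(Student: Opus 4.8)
The plan is to derive an energy estimate for the difference $v(t) = \phi_t(u_1) - \phi_t(u_2)$, writing $\phi_t(u_j) = (S_j, A_j)$ and $v = (\sigma, \alpha)$ with $\sigma = S_1 - S_2$, $\alpha = A_1 - A_2$. First I would subtract the two copies of the system \eqref{eq:SClinear}. The phase equation gives

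\[
\pa_t \sigma + \frac{\nabla(S_1+S_2)}{2}\cdot\nabla\sigma = \eps^2\Delta\sigma,
\]

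which is a transport equation for $\sigma$ with a smooth (indeed $H^{s+1}$-controlled) drift plus a dissipative term of the favorable sign; the amplitude equation gives

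\[
\pa_t\alpha + \nabla S_1\cdot\nabla\alpha + \nabla\sigma\cdot\nabla A_2 + \frac{\alpha}{2}\Delta S_1 + \frac{A_2}{2}\Delta\sigma = \frac{i\eps}{2}\Delta\alpha - i\eps\,\alpha\,\Delta S_1 - i\eps\, A_2\,\Delta\sigma .
\]

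Then I would apply $\partial^\beta$ for $|\beta|\le s+2$ to the $\sigma$-equation and $|\beta|\le s$ to the $\alpha$-equation, take the real part of the $L^2$ inner product with $\partial^\beta\sigma$ (resp. $\partial^\beta\alpha$), and sum.

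The key points in the estimate are the following. The dissipative terms $\eps^2\Delta\sigma$ and $\frac{i\eps}{2}\Delta\alpha$ contribute $-\eps^2\|\nabla\partial^\beta\sigma\|^2 \le 0$ and a purely imaginary (hence vanishing after taking real part) term respectively, so they can simply be discarded — this is exactly the mechanism that makes the constant $C_2$ independent of $\eps$. The transport terms $\nabla S_1\cdot\nabla\partial^\beta\sigma$ and $\nabla S_1\cdot\nabla\partial^\beta\alpha$ are handled by integration by parts, losing one derivative onto $\nabla S_1$, which is controlled since $S_1 \in H^{s+3}$ with $s+1 > d/2+1$; all commutator terms $[\partial^\beta, \nabla S_j\cdot\nabla]$ are estimated by the standard Kato–Ponce / Moser-type commutator and product estimates using that $H^{s}$ and $H^{s+1}$ are algebras for $s > d/2$. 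The genuinely asymmetric terms are those containing $\nabla\sigma$ and $\Delta\sigma$ multiplied by $A_2$ or $\nabla A_2$: here one derivative (indeed two, in the $\Delta\sigma$ term) falls on $\sigma$, which is why we need $\sigma$ measured in $H^{s+2}$ while $\alpha$ is only in $H^s$ — the norm $\|\cdot\|_s$ on $\Sigma_s$ is precisely designed so that $\|\Delta\sigma\|_{H^s} \le \|\sigma\|_{H^{s+2}} \le \|v\|_s$. Since $A_2 \in H^{s}$ and we only need $\nabla A_2 \in H^{s-1} \hookrightarrow L^\infty$ (using $s > d/2+1$), these terms are bounded by $C(M)\|v\|_s^2$.

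Collecting everything, I obtain $\frac{d}{dt}\|v(t)\|_s^2 \le 2C_2(M)\|v(t)\|_s^2$ for almost every $t$, where $C_2(M)$ depends only on $M$ through the $H^{s+1}$-norm of $\phi_t(u_1)$ and the $H^s$-norm of $\phi_t(u_2)$ (both bounded by $M$ by hypothesis) and on $\mathcal V$. Grönwall's inequality then yields $\|v(t)\|_s \le \|v(0)\|_s\exp(C_2(M)t) = \|u_1-u_2\|_s\exp(C_2 t)$, which is the claim. One technical point to address is regularity: the a priori estimate is performed formally at the top order, so strictly one should run it at a lower regularity level where the solutions are genuinely differentiable in time (or use a mollification/Friedrichs regularization argument and pass to the limit), then bootstrap; since $\phi_t(u_1) \in L^\infty([0,T], \Sigma_{s+1})$ and $\phi_t(u_2) \in L^\infty([0,T], \Sigma_s)$ with $s+1$ more than enough regularity for the drift coefficients, this causes no difficulty.

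The main obstacle is bookkeeping the derivative loss correctly: one must verify that every term in which a derivative is transferred onto $\sigma$ really does leave $\sigma$ in $H^{s+2}$ and no worse, and that every coefficient multiplying such a term lies in a space that embeds into $L^\infty$ under the hypothesis $s > d/2+1$ (so that the $u_1$-solution is only needed in $\Sigma_{s+1}$, not $\Sigma_{s+2}$ — this asymmetry between $u_1$ and $u_2$ is the whole point of the lemma and is what makes it usable in the Lady Windermere's fan induction of Theorem \ref{thm:conv}). Once the structure of which factor carries how many derivatives is laid out, the individual estimates are routine applications of Sobolev embedding, the algebra property, and Kato–Ponce commutator estimates.
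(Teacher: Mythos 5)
Your overall strategy --- an energy estimate for the difference, discarding the $\eps$-dissipative terms to obtain an $\eps$-uniform constant, Kato--Ponce commutator bounds, then Gr\"onwall --- is the same as the paper's, which routes the computation through the auxiliary Lemma \ref{lem:stabaux}. However, there is a genuine gap in the derivative bookkeeping, and it sits exactly at the point that makes the lemma delicate. Your phase estimate cannot be run as written: you propose to apply $\partial^\beta$, $|\beta|\le s+2$, directly to
\[
\pa_t\sigma+\tfrac12\nabla(S_1+S_2)\cdot\nabla\sigma=\eps^2\Delta\sigma ,
\]
but the top-order commutator $[\partial^\beta,\,b\cdot\nabla]\sigma$ with $b=\tfrac12\nabla(S_1+S_2)$ contains $\partial^\beta b\cdot\nabla\sigma$, which requires $b\in H^{s+2}$, i.e.\ $S_2\in H^{s+3}$, whereas the hypothesis $\phi_t(u_2)\in L^\infty([0,T],\Sigma_s)$ only gives $S_2\in H^{s+2}$. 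No regrouping of the scalar equation repairs this: with drift $\nabla S_2$ the same commutator fails, and with drift $\nabla S_1$ the source becomes $\tfrac12|\nabla\sigma|^2$, which at level $H^{s+2}$ costs $\sigma\in H^{s+3}$. The paper's way out is to differentiate the eikonal equation and write the equation for $w=\nabla\sigma$ in the asymmetric form $\pa_t w+(v_2\cdot\nabla)w+(w\cdot\nabla)v_1=\cdots$ with $v_i=\nabla S_i$: the drift $v_2$ then only enters through a commutator needing $S_2\in H^{s+2}$, the derivative loss lands on $v_1\in H^{s+2}$ (available since $u_1\in\Sigma_{s+1}$), and $\|\sigma\|_{L^2}$ is estimated separately. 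This is precisely why the inner product \eqref{eq:produitscalaire} is built from $\|S\|_{L^2}$ and $\|\nabla S\|_{H^{s+1}}$ rather than $\|S\|_{H^{s+2}}$.

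The same wrong-way asymmetry appears in your amplitude equation: you grouped the transport terms as $\nabla S_1\cdot\nabla\alpha+\nabla\sigma\cdot\nabla A_2$, and the source $\nabla\sigma\cdot\nabla A_2$ must be placed in $H^s$, which requires $\nabla A_2\in H^s$, i.e.\ $A_2\in H^{s+1}$ --- again unavailable from $\Sigma_s$, and not recoverable by integration by parts since $\alpha$ lives only in $H^s$. One must instead group as $\nabla S_2\cdot\nabla\alpha+\nabla\sigma\cdot\nabla A_1+\cdots$, as in Lemma \ref{lem:stabaux}, so that the field multiplying the difference comes from the more regular solution ($A_1\in H^{s+1}$). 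In short, the rule is the opposite of the one you adopted: the less regular solution $u_2$ may appear only as a transport coefficient (where commutators cost nothing beyond $\Sigma_s$), while every source term of the form difference times background must carry the background of $u_1$. Your closing paragraph asserts that the bookkeeping reduces to checking that coefficients lie in spaces embedding into $L^\infty$; that is not the case here, and getting this splitting right is the actual content of the proof.
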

	\begin{remark}
		Let us insist on the fact that in Lemma \ref{lem:1}, we have to control $\phi_t(u_1)$ in $\Sigma _{s+1}$ and $\phi_t(u_2)$ in $\Sigma _{s}$ to get Lipschitz-type stability in $\Sigma_{s}$.
		% Donner une référence générale?
	\end{remark}
	\begin{lemma}\label{lem:2}
		Let $M>0$ and $s>d/2+1$. There exist $h_{3} = h_{3}(M)>0$ and $C_{3} = C_{3}(M)>0$ such that for any $\eps\in(0,\eps_{max}]$, any $u_0\in \Sigma_{s}$ satisfying $\|u_0\|_{s}\leq M$ and any $0\leq t\leq h_{3}$, we have
		\begin{enumerate}[(a)]
			\item $\|\varphi^{1234}_{t}(u_0)\|_{s}\leq 8M$.
			\item Furthermore, if $u_0\in\Sigma_{s+2}$, then 
			\[
				\|\varphi^{1234}_{t}(u_0)\|_{s+2}\leq  \exp\(C_3t\)\(\|u_0\|_{s+2}+ t\|\mathcal{V}\|_{H^{s+4}}\).
			\]
		\end{enumerate}
	\end{lemma}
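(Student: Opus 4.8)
The strategy is to estimate the growth of $\|\cdot\|_s$ (and $\|\cdot\|_{s+2}$) separately along each of the four elementary flows $\varphi^4_t$, $\varphi^3_t$, $\varphi^2_t$, $\varphi^1_t$, and then compose. The key structural feature to exploit is that each sub-step has an explicit solution formula (eikonal/characteristics for the phase, free Schrödinger or Fourier multiplier for the amplitude, pointwise exponential for the regularizing step), so one can write down mild/Duhamel-type expressions and apply the algebra property of $H^s$ for $s>d/2+1$, together with the tame estimates $\|fg\|_{H^s}\le C(\|f\|_{H^s}\|g\|_{L^\infty}+\|f\|_{L^\infty}\|g\|_{H^s})$ and the fact that $s>d/2+1$ controls one derivative in $L^\infty$ via Sobolev embedding.

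I would proceed in the following order. First, for $\varphi^4_t$: equation \eqref{eq:SCflot4phase} is a linear heat-type flow $\partial_t S=\eps^2\Delta S$, which is a Fourier multiplier of modulus $\le 1$, hence non-expansive on every $H^\sigma$ uniformly in $\eps$; for \eqref{eq:SCflot4amplitude} the solution is $A(t)=\exp(-i\eps^{-1}(S(t)-S(0)))A(0)$, and the apparently dangerous $\eps^{-1}$ is tamed because $S(t)-S(0)=\int_0^t\eps^2\Delta S\,d\tau$ carries a factor $\eps^2$, so $\eps^{-1}(S(t)-S(0))=O(\eps)\,t$ in $H^{s+1}$; differentiating the exponential and using the $H^s$ algebra property then gives $\|A(t)\|_{H^s}\le\exp(Ct)\|A(0)\|_{H^s}$ with $C$ independent of $\eps$. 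Second, $\varphi^3_t$ only shifts the phase by $-t\mathcal V$ and leaves $A$ unchanged, contributing the additive term $t\|\mathcal V\|_{H^{s+4}}$ (the shift in index absorbing the two extra derivatives in $\|\cdot\|_{s+2}$). Third, $\varphi^2_t$ is the Fourier multiplier $\exp(i(\eps-1)t\Delta/2)$ on $A$ (identity on $S$), again an $L^2$-isometry, hence an isometry on each $H^\sigma$. Fourth, $\varphi^1_t$: the phase solves the inviscid eikonal equation by characteristics, which for small $t$ (depending on $M$) remains a smooth diffeomorphism with bounded Sobolev norms of the flow map and its inverse; the amplitude is recovered from $w=Ae^{iS}$ solving the free Schrödinger equation $i\partial_t w=-\tfrac12\Delta w$, so $\|w(t)\|_{H^\sigma}=\|w(0)\|_{H^\sigma}$ and one reconstructs $A=we^{-iS}$, losing only a multiplicative constant $\exp(C\sigma t)$ coming from the $H^\sigma$-norm of $e^{\pm iS}$ (controlled via Moser estimates since $\|S(t)\|_{H^{\sigma+2}}$ is bounded on the short interval). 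Composing the four estimates, the multiplicative constants combine into a single $\exp(C_3 t)$, the additive potential term passes through the subsequent non-expansive flows unchanged, and choosing $h_3=h_3(M)$ small enough makes $\exp(C_3 h_3)\le 2$ on each factor so that the fourfold composition stays below $2^3 M=8M$, giving (a); keeping the sharp constants rather than bounding them by $2$ gives (b).

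The main obstacle is the eikonal step in $\varphi^1_t$: one must show that the Hamilton--Jacobi flow $\partial_t S+\tfrac12|\nabla S|^2=0$ is well-posed on a time interval $[0,h_3(M)]$ depending only on $M=\|u_0\|_s$, with Sobolev bounds on $S$ that do not degenerate, and then propagate those bounds through the characteristics to control $\|A(t)\|_{H^s}$ after the change of variables. The caustic-avoidance is automatic for $t$ small relative to $\|\nabla^2 S_0\|_{L^\infty}$ (hence relative to $M$ since $s>d/2+1$), but the bookkeeping of the Sobolev norms of the characteristic flow and of $e^{\pm iS}$, and checking that nothing blows up as $\eps\to0$, is where the real work lies; everything else reduces to the unitarity of the Fourier multipliers and routine tame product estimates. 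A secondary technical point is ensuring that the constants $C_3$, $h_3$ genuinely depend only on $M$ and $\|\mathcal V\|$ (not on $\eps$), which is where the $\eps^2$-gain in $\varphi^4$ is essential.
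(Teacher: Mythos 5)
Your overall strategy -- estimate each of the four flows separately in $\Sigma_s$ and $\Sigma_{s+2}$ and compose, with the $8M$ coming from at most three doublings and the additive $t\|\mathcal{V}\|_{H^{s+4}}$ coming from $\varphi^3$ -- is exactly the paper's (it proves Lemmas \ref{lem:flot1}, \ref{lem:flot23}, \ref{lem:flot4} and concatenates them). Your treatment of $\varphi^2$, $\varphi^3$ and $\varphi^4$ is sound; in particular the observation that $\eps^{-1}(S(t)-S(0))=\eps\int_0^t\Delta S\,d\tau$ is $O(t)$ in $H^{s}$ (resp.\ $H^{s+2}$) correctly tames the $\eps^{-1}$ in the fourth flow, even if the paper instead runs an energy estimate and uses that $\RE\<\Lambda^{s}A,-i\eps\,\Delta S\,\Lambda^{s}A\>=0$.

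The gap is in the amplitude estimate for $\varphi^1$, which is the crucial step. You propose to reconstruct $A=we^{-iS}$ from the free Schr\"odinger evolution of $w=Ae^{iS}$ and claim this loses ``only a multiplicative constant $\exp(C\sigma t)$ coming from the $H^\sigma$-norm of $e^{\pm iS}$''. That is not what the Moser estimate gives: the $H^\sigma$ multiplier norm of $e^{\pm iS_0}$ is a constant $K(M)>1$ depending on $\|S_0\|_{H^{\sigma}}\sim M$ and \emph{not} tending to $1$ as $t\to0$, so the explicit formula $A(t)=e^{-iS(t)}e^{it\Delta/2}\bigl(A_0e^{iS_0}\bigr)$ only yields $\|A(t)\|_{H^\sigma}\le K(M)^2\|A_0\|_{H^\sigma}$. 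This is not enough for part (a) (the factor is not controlled by $2$ for small $t$) and fatally not enough for part (b), whose bound must reduce to $\|u_0\|_{s+2}$ at $t=0$. Recovering the $e^{Ct}$ factor from the explicit formula requires exploiting the cancellation $e^{-iS(t)}e^{it\Delta/2}e^{iS_0}\to\mathrm{Id}$, and the natural estimate $\|(e^{it\Delta/2}-1)v\|_{H^\sigma}\le Ct\|v\|_{H^{\sigma+2}}$ costs two derivatives on $A_0$, which are unavailable at the top regularity in both (a) and (b). The paper avoids this entirely by an $H^{s}$ energy estimate directly on the transport equation \eqref{eq:SCflot1amplitude} (Lemma \ref{eq:transport}): the term $\tfrac{i}{2}\Delta A$ is skew-adjoint and drops out of $\pa_t\|A\|_{H^{s}}^2$, and the transport part is handled by the Kato--Ponce commutator bound, giving $\pa_t\|\varphi^1_t(u_0)\|_{s}^2\le C\|\varphi^1_t(u_0)\|_{s}^3$ and then the Riccati/Gr\"onwall bounds. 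The explicit formula is used in the paper only to justify \emph{existence} of the flow, not to estimate it. You would need to replace your multiplier-norm argument for $\varphi^1$ by such an energy estimate (and similarly your $\varphi^1$ tame estimate in $\Sigma_{s+2}$) for the lemma to follow.
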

	\begin{lemma}\label{lem:3}
		Let $M>0$ and $s>d/2+1$. There exist $h_4 = h_4(M)>0$ and $K_4 = K_4(M)>0$ such that for any $\eps\in(0,\eps_{max}]$ and any $u_0\in\Sigma_{s+2}$ satisfying 
		\[
			\|u_0\|_{s+2}\leq M,
		\]
		we have for any $t\in[0,h_4]$ that
		\[
			\|\phi_t(u_0)-\varphi_{t}^{1234}(u_0)\|_{s}\leq K_4t^2.
		\]
	\end{lemma}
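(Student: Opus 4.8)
The plan is to show that the composed flow $\varphi_t^{1234}$ is a consistent (first-order) approximation of the exact flow $\phi_t$ by comparing Taylor expansions in $t$ around $t=0$. First I would fix $M>0$ and work on the common existence interval: by Lemma \ref{lem:0} (applied with the bound $M$ on $\Sigma_{s+2}$, hence a fortiori on $\Sigma_s$) and by Lemma \ref{lem:2}(a)--(b), there is an $h_4=h_4(M)>0$ such that for $\|u_0\|_{s+2}\le M$ both $\phi_t(u_0)$ and $\varphi_t^{1234}(u_0)$ are well-defined and bounded in $\Sigma_{s+2}$ (uniformly in $\eps$, and with the $\Sigma_{s+2}$-bound controlled via the $C_3$-exponential of Lemma \ref{lem:2}(b)) for $t\in[0,h_4]$. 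This extra regularity is what will allow me to estimate a second-order remainder in the $\Sigma_s$-norm.

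Next I would set up the Taylor-with-integral-remainder comparison. Write $u(t)=\phi_t(u_0)$, which solves $\pa_t u = \N(u)$ where $\N=\N_1+\N_2+\N_3+\N_4$ is the full right-hand side of \eqref{eq:SClinear} split according to the four subflows; and write $v(t)=\varphi_t^{1234}(u_0)=\varphi_t^1\circ\varphi_t^{234}(u_0)$. Each $\varphi_t^i$ solves $\pa_t = \N_i$. The key identities are the first-order Taylor expansions at $t=0$:
\[
	\phi_t(u_0) = u_0 + t\,\N(u_0) + \int_0^t (t-\sigma)\,\pa_\sigma^2\phi_\sigma(u_0)\,d\sigma,
\]
and similarly, applying the chain rule to the composition and expanding each factor,
\[
	\varphi_t^{1234}(u_0) = u_0 + t\,\big(\N_1(u_0)+\N_2(u_0)+\N_3(u_0)+\N_4(u_0)\big) + R(t),
\]
where $R(t)$ collects all the double integrals in $\sigma$ coming from the second $h$-derivatives of the individual flows $\varphi_\sigma^i$ and from the cross terms $D\varphi_\sigma^1(\cdot)\cdot\pa_\sigma\varphi_\sigma^{234}(\cdot)$ (which, differentiated once more, produce Fréchet derivatives of the $\N_i$ acting on the $\N_j$, i.e. precisely the commutator-type quantities introduced in the Notations subsection). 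Since the linear terms coincide exactly — $\N=\sum_i\N_i$ by construction of the splitting — the difference is
\[
	\phi_t(u_0)-\varphi_t^{1234}(u_0) = \int_0^t (t-\sigma)\,\pa_\sigma^2\phi_\sigma(u_0)\,d\sigma \;-\; R(t),
\]
and it remains to bound both terms by $K_4 t^2$ in $\|\cdot\|_s$.

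The main work — and the main obstacle — is to show that $\pa_\sigma^2\phi_\sigma(u_0)$ and the integrand of $R(\sigma)$ are bounded in $\Sigma_s$, uniformly in $\eps\in(0,\eps_{max}]$ and $\sigma\in[0,h_4]$, whenever $u_0$ is bounded in $\Sigma_{s+2}$. For $\phi$ this amounts to differentiating $\pa_\sigma u = \N(u)$ once more: $\pa_\sigma^2 u = D\N(u)\cdot\N(u)$; since $\N$ contains the term $\eps^2\Delta S$ in the phase equation and $-i\eps A\Delta S$, $\tfrac{i\eps}{2}\Delta A$ in the amplitude equation, the formal worst term $D\N(u)\cdot\N(u)$ loses up to two derivatives on $S$ and two on $A$ — hence the need for the $\Sigma_{s+2}$ bound — but one must check the $\eps$-dangerous contributions stay bounded: the point is that the highest-order-in-$\eps$ pieces appear multiplied by $\eps^2$ (or $\eps$) so that, after counting derivatives against the $\Sigma_{s+2}$ norm and using that $H^s$ is an algebra for $s>d/2$, everything is $O(1)$ uniformly in $\eps$. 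The same bookkeeping applies to each $\N_i$ separately: $\N_2$ and the regularizing $\N_4$ carry the $\eps$-factors, $\N_1$ and $\N_3$ are $\eps$-independent, and each $D\N_i(u_0)\cdot\N_j(u_0)$ costs at most two derivatives, absorbed by $\|u_0\|_{s+2}\le M$; the nonlinear (quadratic) terms such as $\nabla S\cdot\nabla A$, $|\nabla S|^2$, $A\Delta S$ are handled by the algebra property and the Sobolev embedding $H^s\hookrightarrow L^\infty$. Collecting the constants, which depend only on $M$, $\|\mathcal V\|_{H^{s+4}}$ and $d$ but not on $\eps$, gives a uniform bound $\|\pa_\sigma^2\phi_\sigma(u_0)\|_s + \|\text{(integrand of }R)\|_s \le C(M)$, and then
\[
	\|\phi_t(u_0)-\varphi_t^{1234}(u_0)\|_s \le \int_0^t (t-\sigma)\,C(M)\,d\sigma + \int_0^t \sigma\,C(M)\,d\sigma \le K_4\, t^2
\]
for $t\in[0,h_4]$, with $K_4=K_4(M)$, as claimed.
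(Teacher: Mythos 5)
Your overall reduction---the first-order Taylor coefficients of $\phi_t$ and $\varphi^{1234}_t$ coincide because $\N=\sum_i\N_i$, so everything rests on bounding second-order remainders---is correct, but the key step fails: $\pa_\sigma^2\phi_\sigma(u_0)$ is \emph{not} bounded in $\Sigma_s$ for $u_0\in\Sigma_{s+2}$, and the same is true of the second $h$-derivative of the composition. The vector field $\N$ maps $\Sigma_{\sigma}$ into $\Sigma_{\sigma-2}$ (it contains $\eps^2\Delta S$, $\tfrac{i\eps}{2}\Delta A$, and---with no $\eps$ at all---the $\tfrac{i}{2}\Delta A$ of $\N_1$), so $D\N(u)\cdot\N(u)$ lands only in $\Sigma_{s-2}$ when $u\in\Sigma_{s+2}$. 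Concretely, $D\N_1(u)\cdot\N_1(u)$ contains $-\tfrac14\Delta^2 A$, and $D\N_2(u)\cdot\N_2(u)$ contains $-\tfrac{(\eps-1)^2}{4}\Delta^2A$: these need $A\in H^{s+4}$, whereas $\|u\|_{s+2}\le M$ only gives $A\in H^{s+2}$; likewise $\pa_\sigma^2 S$ contains $\eps^4\Delta^2S$, needing $S\in H^{s+6}$ rather than $H^{s+4}$. Your claim that ``the highest-order-in-$\eps$ pieces appear multiplied by $\eps^2$ (or $\eps$)'' so that everything is $O(1)$ is therefore false: the prefactors are merely bounded (some are $\eps$-free, some tend to a nonzero limit as $\eps\to0$), and a bounded prefactor does not repair a loss of derivatives. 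The resulting constant would depend on $\|u_0\|_{s+4}$ or higher, and this cannot be removed by density; it would also break the global convergence proof, where the induction only propagates the $\Sigma_{s+2}$-norm of the iterates $(\varphi^{1234}_h)^k(u_0)$ via Lemma~\ref{lem:2}(b).

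The paper circumvents exactly this obstruction by never forming $D\N\cdot\N$. It differentiates the defect $\mathscr{R}(h,u)=\varphi^{1234}_h(u)-\phi_h(u)$ once in $h$ and splits $\pa_h\mathscr{R}$ into (a) $\sum_k\bigl(\N_k(\varphi^{1234}_h(u))-\N_k(\phi_h(u))\bigr)$, which is absorbed by the Lipschitz-type stability of Lemma~\ref{lem:stabaux} (there the dangerous $\eps$-Laplacians are neutralized by integration by parts, not by derivative counting), and (b) a source $\mathscr{S}(h,u)$ built from the quantities $\chi_{ij}(h,v)=\pa_2\varphi^i_h(v)\cdot\N_j(v)-\N_j(\varphi^i_h(v))$. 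Each $\chi_{ij}$ vanishes at $h=0$ and solves a linear equation driven by the \emph{commutator} $[\N_i,\N_j]$; only there does the crucial cancellation occur ($\Delta^2A$-type terms cancel between $D\N_i\cdot\N_j$ and $D\N_j\cdot\N_i$), which is why Lemma~\ref{lem:commutateurs} can bound $\|[\N_i,\N_j](u)\|_s$ by $C\|u\|_{s+2}^2(1+\|u\|_{s+2})$. Gr\"onwall then gives $\|\mathscr{S}\|_s\le Ch$ and finally $\|\mathscr{R}\|_s\le K_4h^2$ with $K_4=K_4(M)$. Your symmetric Taylor comparison bounds the two remainders separately and therefore sees anticommutator and diagonal terms $D\N_i\cdot\N_i$, which do not enjoy this cancellation; to make your strategy work you would have to reorganize the second-order terms so that only commutators survive, which is essentially the paper's argument.
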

	\subsection{Proof of Theorem \ref{thm:conv}}
	Let us denote    
    	\be\label{bound}
            M_{s}^{\eps}(T) := \sup\{\|\phi_t^\eps(u_0)\|_{s}:\; 0\leq t\leq T\}.
    	\ee
    	for $\eps\geq0$ and $T\geq0$.
    %
    %
    %In this section, we will need the following lemma.
    %
  
    %
    %
  %
	%
	%
	%
	Let $s>d/2+1$, $\eps\in(0,\eps_{max}]$, $u_0\in \Sigma_{s+2}$, $n\in \NN$ and $h>0$ be such that $nh\leq T<T_{max}$ (see \eqref{eq:Tmax}). 
	By Theorem \ref{thm:existencecontrole}, there exist $M_{s}$, $M_{s+1}$ and $M_{s+2}$ independent of $\eps\in(0,\eps_{max}]$ such that for all $\eps\in(0,\eps_{max}]$, 
	\[
		M^\eps_s\leq M_s,\;M^\eps_{s+1}\leq M_{s+1} \mbox{ and }M^\eps_{s+2}\leq M_{s+2}
	\]
	(see \eqref{bound}).
	We denote
	\[\begin{split}
		%&a = \sup\left\{\frac{t}{e^{t}-1},\;t\geq0\right\},\\
		& C = C_3(2M_s),\\
		&c_0 =  \|u_0\|_{s+2}\exp\(CT\) + \|\mathcal{V}\|_{H^{s+4}}e^{2TC}/C,\\
		&C' = C_2(M_{s+1} + 4M_s),\\
		&\widetilde c = K_4(c_0)a e^{C'T}/C'.
	\end{split}\]
	Assume that 
	\be\label{hyp:h}
		0\leq h \leq \min \(h_3(c_0),M_s/\widetilde c,h_1(2M_s),h_4(c_0)\).
	\ee
	Here, $h_1$, $C_2$, $h_3$, $h_4$ and $K_4$  are defined in Lemmas \ref{lem:0}, \ref{lem:1}, \ref{lem:2} and  \ref{lem:3}.

	We show by induction on $0\leq k\leq n$ that
	\begin{enumerate}[(i)]
		\item \label{induction2} $(\varphi^{1234}_{h})^k(u_0)$ is well-defined, belongs to $\Sigma_{s+2}$ and
			\[
				\|(\varphi^{1234}_{h})^{k}(u_0)\|_{s+2}\leq \|u_0\|_{s+2}\exp\(Ckh\) + h \|\mathcal{V}\|_{H^{s+4}}\frac{e^{(k+1)hC} - e^{hC}}{e^{hC}-1}\leq c_0,
			\]
		\item \label{induction1} $\|\phi_{kh}(u_0) - (\varphi^{1234}_{h})^k(u_0)\|_s\leq h^2K_4(c_0)\frac{e^{C'hk}-1}{e^{C'h}-1}\leq \widetilde c h$,
	\end{enumerate}
	and Theorem \ref{thm:conv} follows then from point \eqref{induction1} with $k = n$.
	%
	%
	%\begin{proof}
		 
		The induction hypothesis is true for $k = 0$. Let us assume points \eqref{induction2} and \eqref{induction1}  true for $0\leq k\leq n-1$.

		Lemma \ref{lem:2}, point \eqref{induction2} and \eqref{hyp:h} ensure that 
		\[
			 (\varphi^{1234}_{h})^{k+1}(u_0)
		\]
		 is well-defined and belongs to $\Sigma_{s+2}$.
		By Point \eqref{induction1} and \eqref{hyp:h}, we have
		\[
			\|(\varphi^{1234}_{h})^k(u_0)\|_s\leq M_s + \|\phi_{kh}(u_0) - (\varphi^{1234}_{h})^k(u_0)\|_s\leq 2M_s.
		\]
		By Lemma \ref{lem:2} and \eqref{hyp:h}, we have 
		\[
			\|(\varphi^{1234}_{h})^{k+1}\|_{s+2}\leq  \exp\(Ch\)\(\|(\varphi^{1234}_{h})^k(u_0)\|_{s+2}+ h\|\mathcal{V}\|_{H^{s+4}}\)
		\]
		and point \eqref{induction2} ensures that 
		\[\begin{split}
			\|(\varphi^{1234}_{h})^{k+1}\|_{s+2}\leq  \|u_0\|_{s+2}\exp\(C(k+1)h\) + h \|\mathcal{V}\|_{H^{s+4}}\frac{e^{(k+2)hC} - e^{hC}}{e^{hC}-1}\leq c_0.
		\end{split}\]
		By Lemma \ref{lem:0} and \eqref{hyp:h}, $h'\mapsto\phi_{h'}\circ(\varphi^{1234}_{h})^k(u_0)$ is well-defined and satisfies for all $0\leq h'\leq h$
		\[
			\|\phi_{h'}\circ(\varphi^{1234}_{h})^k(u_0)\|_{{s}}\leq 4M_s.
		\]
		By Lemma \ref{lem:1}, we obtain that
		\[\begin{split}
			&\|\phi_{h(k+1)}(u_0)-\phi_h\circ(\varphi^{1234}_{h})^k(u_0)\|_{{s}}\leq \|\phi_{hk}(u_0)-(\varphi^{1234}_{h})^k(u_0)\|_{s} \exp(C'h).
		\end{split}\]
		By Lemma \ref{lem:3}, point \eqref{induction2} and \eqref{hyp:h}, we get 
		\[
			\|\phi_{h}\circ(\varphi^{1234}_{h})^{k}(u_0) - \varphi^{1234}_{h}\circ(\varphi^{1234}_{h})^{k}(u_0)\|_{s}\leq K_4(c_0)h^2,
		\]	
		so that
		\[\begin{split}
			&\|\phi_{h(k+1)}(u_0)-(\varphi^{1234}_{h})^{k+1}(u_0)\|_{{s}}\leq K_4(c_0)h^2 + \|\phi_{hk}(u_0)-(\varphi^{1234}_{h})^k(u_0)\|_{s} \exp(C'h).
		\end{split}\]
		By point \eqref{induction1}, we have then that
		\[
			\|\phi_{h(k+1)}(u_0)-(\varphi^{1234}_{h})^{k+1}(u_0)\|_{{s}}\leq K_4(c_0)h^2\(\frac{e^{C'h(k+1)}-1}{e^{C'h}-1}\).
		\]
		Thus, points \eqref{induction2} and \eqref{induction1}  are true for $k+1$.
		%
		%
%=========================================================================

	\section{Proof of the main lemmas} \label{sect:proofs}
	\subsection{Auxiliary results}
	Let us denote by 
	$
		\<\cdot ,\cdot\>
	$
	the $L^2$ scalar product, for $s>0$
	\[
		\Lambda^s = (1-\Delta)^{s/2},
	\]	
	\be\label{eq:proj}
			\Pi_1u = S,\; \Pi_2u = A, \mbox{ for }u = \(\begin{array}{c}S\\A\end{array}\)
	\ee
	and
	\be\begin{split}\label{eq:produitscalaire}
			\<u_1,u_2\>_s &= \<\Pi_1u_1 , \Pi_1u_2\> 
			\\
			&\quad+  \<\Lambda^{s+1}\nabla\Pi_1u_1 , \Lambda^{s+1}\nabla\Pi_1u_2\> +  \RE\<\Lambda^{s}\Pi_2u_1 , \Lambda^{s}\Pi_2u_2\>.
	\end{split}\ee

	We recall  two points that will be of constant use in the following: the Sobolev space $H^{s}\subset L^\infty$ is an algebra for $s>d/2$
	and  the Kato-Ponce \cite{kato1988commutator} inequality holds true:
	\begin{proposition}\label{prop:KatoPonce}
		Let ${s_0}> d/2+1$. There is $c>0$ such that for all $f\in H^{{s_0}}(\RR^d)$ and $g\in H^{{s_0}-1}(\RR^d)$
		\[
			\|\Lambda^{{s_0}}(fg)-f\Lambda^{{s_0}}g\|_{L^2}\leq c(\|\nabla f\|_{L^\infty}\|g\|_{H^{{s_{0}}-1}}+\|f\|_{H^{{s_{0}}}}\|g\|_{L^\infty}).
		\]
	\end{proposition}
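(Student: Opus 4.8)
The plan is to reduce the estimate to bilinear harmonic analysis on the Fourier side. Writing $\langle\xi\rangle=(1+|\xi|^2)^{1/2}$, one has
\[
\mathcal F\bigl(\Lambda^{s_0}(fg)-f\Lambda^{s_0}g\bigr)(\xi)=c\int_{\RR^d}\bigl(\langle\xi\rangle^{s_0}-\langle\eta\rangle^{s_0}\bigr)\,\widehat f(\xi-\eta)\,\widehat g(\eta)\,d\eta ,
\]
so that the commutator is the bilinear operator with symbol $m(\xi,\eta)=\langle\xi\rangle^{s_0}-\langle\eta\rangle^{s_0}$. The one elementary input is a pointwise control of $m$: since $\langle a+b\rangle\le\langle a\rangle+\langle b\rangle$ and $|\nabla\langle\cdot\rangle^{s_0}(\zeta)|\le s_0\langle\zeta\rangle^{s_0-1}$, the mean value theorem gives $m(\xi,\eta)=(\xi-\eta)\cdot\int_0^1\nabla\langle\cdot\rangle^{s_0}\bigl(\eta+t(\xi-\eta)\bigr)\,dt$ and hence $|m(\xi,\eta)|\le C|\xi-\eta|\bigl(\langle\xi-\eta\rangle^{s_0-1}+\langle\eta\rangle^{s_0-1}\bigr)$. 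The factor $(\xi-\eta)\widehat f(\xi-\eta)=\widehat{\nabla f}$ is what eventually produces the $\|\nabla f\|_{L^\infty}$ term.

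I would then organize the estimate by the Littlewood--Paley (Bony) decomposition $fg=T_fg+T_gf+R(f,g)$ and the analogous one for $f\Lambda^{s_0}g$, which splits the commutator into three pieces: (i) the paracommutator $[\Lambda^{s_0},T_f]g$, with $f$ at low frequency; (ii) $\Lambda^{s_0}(T_gf)-T_{\Lambda^{s_0}g}f$, with $g$ at low frequency; and (iii) the remainder $\Lambda^{s_0}R(f,g)-R(f,\Lambda^{s_0}g)$, the high--high part. For (i): using that $\Lambda^{s_0}$ restricted to dyadic annuli of scale $2^j$ equals $2^{js_0}$ times convolution with a mean-zero $L^1$ kernel concentrated at scale $2^{-j}$, one writes $[\Lambda^{s_0},S_{j-2}f]\Delta_jg=2^{js_0}\!\int k_j(x-y)\bigl(S_{j-2}f(y)-S_{j-2}f(x)\bigr)\Delta_jg(y)\,dy$, so the mean value theorem on $S_{j-2}f$ gives $\|[\Lambda^{s_0},S_{j-2}f]\Delta_jg\|_{L^2}\lesssim 2^{j(s_0-1)}\|\nabla f\|_{L^\infty}\|\Delta_jg\|_{L^2}$; these blocks being frequency-localized in annuli $\sim 2^j$, square-summation gives $\lesssim\|\nabla f\|_{L^\infty}\|g\|_{H^{s_0-1}}$. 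For (ii): both terms are annulus-localized, so $\Lambda^{s_0}$ costs a factor $2^{js_0}$ that pairs with $\|\Delta_jf\|_{L^2}$ to recover $\|f\|_{H^{s_0}}$, while the low-frequency $g$-factor is controlled by $\|g\|_{L^\infty}$ (using $\|S_{j-2}\Lambda^{s_0}g\|_{L^\infty}\lesssim 2^{js_0}\|g\|_{L^\infty}$, valid since $s_0>0$); altogether $\lesssim\|f\|_{H^{s_0}}\|g\|_{L^\infty}$.

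The main obstacle is term (iii). There $\Delta_jf$ and $\tilde\Delta_jg$ have comparable frequencies $\sim 2^j$, the commutator gains no derivative, and the product lives on a ball rather than an annulus, so a naive triangle inequality yields a divergent ($\ell^1$) series with no $\ell^2$ almost-orthogonality to rescue it. One subterm, $\Lambda^{s_0}R(f,g)$, can still be handled by projecting the output onto frequency $2^k$ (necessarily $k\lesssim j$): the factor $2^{ks_0}$ produced by $\Lambda^{s_0}$, combined with $2^{-j}$ from the mean value theorem on $\Delta_jf$, leaves a geometrically decaying kernel $2^{(k-j)s_0}$ — summable because $s_0>0$ — and Young's inequality for sequences closes $\|\Lambda^{s_0}R(f,g)\|_{L^2}\lesssim\|\nabla f\|_{L^\infty}\|g\|_{H^{s_0-1}}$. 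The genuinely delicate point is the remaining term $R(f,\Lambda^{s_0}g)$, which is not individually $L^2$-bounded under these hypotheses: its cancellation with $\Lambda^{s_0}R(f,g)$ must be used. Here I would follow \cite{kato1988commutator}, decomposing the kernel of $\Lambda^{s_0}$ into a near part and a rapidly decaying far part and Taylor-expanding $\Delta_jf$ to first order — to second order, using the odd symmetry of the kernel of $\nabla\Lambda^{s_0}$, when $s_0\ge 1$ — and once more invoking $s_0>0$ for summability; equivalently, one can run the entire argument through the Coifman--Meyer multiplier theorem, whose $L^\infty\times L^2\to L^2$ endpoint is exactly what licenses the $\|\nabla f\|_{L^\infty}$ and $\|g\|_{L^\infty}$ on the right-hand side. (Since $s_0>d/2+1$, the embeddings $H^{s_0}\hookrightarrow W^{1,\infty}$ and $H^{s_0-1}\hookrightarrow L^\infty$ make the weaker bound $\|\Lambda^{s_0}(fg)-f\Lambda^{s_0}g\|_{L^2}\le C\|f\|_{H^{s_0}}\|g\|_{H^{s_0-1}}$ elementary and enough for most of the paper; we keep the sharp form.)
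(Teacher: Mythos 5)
The paper does not prove this proposition at all: it is quoted as a known result and used as a black box, the only ``proof'' being the citation to Kato--Ponce \cite{kato1988commutator}. Your Littlewood--Paley sketch is therefore a genuine addition rather than a variant of the paper's argument, and its architecture is the standard modern one and is sound: the symbol bound via the mean value theorem, the treatment of the paracommutator $[\Lambda^{s_0},T_f]g$ (the $2^{-j}$ gain from $S_{j-2}f(y)-S_{j-2}f(x)$, annulus localization of the output, $\ell^2$ summation) and of the low-frequency-$g$ piece (using $\|S_{j-2}\Lambda^{s_0}g\|_{L^\infty}\lesssim 2^{js_0}\|g\|_{L^\infty}$ for $s_0>0$) are complete modulo routine details, and you correctly isolate the high--high remainder as the only deep step. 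Two remarks on that step. First, the obstruction there is not literally a cancellation between $\Lambda^{s_0}R(f,g)$ and $R(f,\Lambda^{s_0}g)$: in the Coifman--Meyer route each is bounded separately, the second by $\|\Lambda^{s_0}f\|_{L^2}\|g\|_{L^\infty}$ (write $f=\Lambda^{-s_0}F$; on the diagonal support the symbol $\langle\eta\rangle^{s_0}\langle\xi\rangle^{-s_0}$ is a Coifman--Meyer multiplier, and the $L^2\times L^\infty\to L^2$ endpoint performs exactly the borderline $\ell^2$ summation that defeats the naive block-by-block estimate). What is true, and what you correctly acknowledge by deferring to \cite{kato1988commutator} or to Coifman--Meyer, is that this endpoint is bilinear Calder\'on--Zygmund theory and cannot be obtained from the triangle inequality; deferring it is on par with what the paper itself does. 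Second, your parenthetical that the non-tame bound $\|f\|_{H^{s_0}}\|g\|_{H^{s_0-1}}$ would suffice ``for most of the paper'' should be tempered: the tame structure, in which only low norms of the solution multiply the high norm, is precisely what Lemmas \ref{lem:flot1}(ii), \ref{lem:flot4}(ii) and \ref{lem:2}(b) exploit so that the growth rate of the $\Sigma_{s+2}$ norm is governed by an $\Sigma_{s}$ bound; so keeping the sharp form, as you do, is not optional for the paper's argument.
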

	The following lemmas will be used several times in our proof.
	\begin{lemma}\label{eq:burger}
		Let $s_0>d/2+1$. There is $C>0$ such that for all $v_0$, $v_1$ and $R\in L^\infty([0,h_0],H^{s_0}(\RR^d)^d)$ satisfying
		\[
			\pa_t v_0 + (v_1\cdot\nabla)v_0 = R,
		\]
		we have
		\[\begin{split}
			\pa_t\|v_0\|_{H^{s_0}}^2
			&\leq C\(\|v_0\|_{H^{s_0}}^2\|\nabla v_1\|_{L^\infty}+ \|v_0\|_{H^{s_0}}\|v_1\|_{H^{s_0}}\|\nabla v_0\|_{L^\infty}\)+ 2\<\Lambda^{s_0}v_0,\Lambda^{s_0}R\>
			\\
			&\leq C\|v_0\|_{H^{s_0}}^2\|v_1\|_{H^{s_0}}+ 2\<\Lambda^{s_0}v_0,\Lambda^{s_0}R\>.
		\end{split}\]
	\end{lemma}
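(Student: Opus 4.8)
The plan is to run a standard energy estimate at the level of the operator $\Lambda^{s_0}$, the only delicate point being a commutator bound via the Kato--Ponce inequality. First I would apply $\Lambda^{s_0}$ to the identity $\pa_t v_0 + (v_1\cdot\nabla)v_0 = R$ and pair the result with $\Lambda^{s_0}v_0$ in $L^2$, componentwise (recall $v_0,v_1,R$ are $\RR^d$-valued). Writing $[\Lambda^{s_0},v_1\cdot\nabla]v_0 = \Lambda^{s_0}\big((v_1\cdot\nabla)v_0\big) - (v_1\cdot\nabla)\Lambda^{s_0}v_0$, this gives an energy identity of the form
\[
\tfrac12\,\pa_t\|v_0\|_{H^{s_0}}^2 = -\big\langle (v_1\cdot\nabla)\Lambda^{s_0}v_0,\Lambda^{s_0}v_0\big\rangle - \big\langle [\Lambda^{s_0},v_1\cdot\nabla]v_0,\Lambda^{s_0}v_0\big\rangle + \big\langle\Lambda^{s_0}R,\Lambda^{s_0}v_0\big\rangle ,
\]
and the statement will follow after multiplying by $2$ and adjusting constants (the last, source term being kept as the scalar product appearing in the claim).

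Next I would treat the two remaining terms separately. For the genuine transport term I would integrate by parts, using $v_1\cdot\nabla\big(\tfrac12|\Lambda^{s_0}v_0|^2\big) = \DIV\big(\tfrac12|\Lambda^{s_0}v_0|^2\,v_1\big) - \tfrac12(\DIV v_1)|\Lambda^{s_0}v_0|^2$, so that this contribution is bounded by $\tfrac12\|\DIV v_1\|_{L^\infty}\|v_0\|_{H^{s_0}}^2 \le C\|\nabla v_1\|_{L^\infty}\|v_0\|_{H^{s_0}}^2$. For the commutator term I would expand $[\Lambda^{s_0},v_1\cdot\nabla]v_0 = \sum_j\big(\Lambda^{s_0}(v_1^{(j)}\pa_j v_0) - v_1^{(j)}\Lambda^{s_0}\pa_j v_0\big)$ and apply Proposition \ref{prop:KatoPonce} with $f = v_1^{(j)}$ and $g = \pa_j v_0$, together with $\|\pa_j v_0\|_{H^{s_0-1}}\le\|v_0\|_{H^{s_0}}$ and $\|\pa_j v_0\|_{L^\infty}\le\|\nabla v_0\|_{L^\infty}$. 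This yields $\|[\Lambda^{s_0},v_1\cdot\nabla]v_0\|_{L^2}\le C\big(\|\nabla v_1\|_{L^\infty}\|v_0\|_{H^{s_0}} + \|v_1\|_{H^{s_0}}\|\nabla v_0\|_{L^\infty}\big)$, and a Cauchy--Schwarz pairing with $\Lambda^{s_0}v_0$ produces exactly the two terms inside the first bracket of the claim.

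Finally, to pass from the first to the second (coarser) inequality I would invoke the Sobolev embedding $H^{s_0-1}(\RR^d)\hookrightarrow L^\infty(\RR^d)$, valid since $s_0 - 1 > d/2$, to bound $\|\nabla v_1\|_{L^\infty}\le C\|v_1\|_{H^{s_0}}$ and $\|\nabla v_0\|_{L^\infty}\le C\|v_0\|_{H^{s_0}}$, whence $\|v_0\|_{H^{s_0}}^2\|\nabla v_1\|_{L^\infty} + \|v_0\|_{H^{s_0}}\|v_1\|_{H^{s_0}}\|\nabla v_0\|_{L^\infty}\le C\|v_0\|_{H^{s_0}}^2\|v_1\|_{H^{s_0}}$. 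I do not expect a genuine obstacle: the only step requiring care is the commutator estimate, where Kato--Ponce must be used in the sharp form of Proposition \ref{prop:KatoPonce} so that the derivative loss falls on only one factor at a time. This dissymmetry --- an $L^\infty$ norm on $\nabla v_0$ against a full $H^{s_0}$ norm on $v_1$, and conversely --- is precisely what keeps the finer first inequality true, and it is what will later allow the Lipschitz-type stability with a one-derivative gap recorded in Lemma \ref{lem:1}.
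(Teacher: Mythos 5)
Your argument is correct and coincides with the paper's own proof: both perform the $\Lambda^{s_0}$-energy estimate, handle the transport term by integration by parts to produce the $\tfrac12\int|\Lambda^{s_0}v_0|^2\DIV v_1$ contribution, and bound the commutator $[\Lambda^{s_0},v_1\cdot\nabla]v_0$ via Proposition \ref{prop:KatoPonce} exactly as you describe. The final passage to the coarser inequality by the embedding $H^{s_0-1}\hookrightarrow L^\infty$ is left implicit in the paper but is the same step you spell out.
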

	\begin{proof}
		We have by integration by parts that
		\[\begin{split}
			\pa_t\frac{\|v_0\|_{H^{s_0}}^2}{2}&= \<\Lambda^{s_0}v_0,\Lambda^{s_0}\pa_t v_0\> = -\<\Lambda^{s_0}v_0,\Lambda^{s_0}(v_1\cdot\nabla)v_0\> + \<\Lambda^{s_0}v_0,\Lambda^{s_0}R\>\\
			%& =  -\<\Lambda^{s_0}v_0,(v_1\cdot\nabla)\Lambda^{s_0}v_0\> -\<\Lambda^{s_0}v_0,[\Lambda^{s_0}, (v_1\cdot\nabla)]v_0\> + \<\Lambda^{s_0}v_0,\Lambda^{s_0}R\>\\
			& \leq \frac{1}{2}\int_{\RR^d}|\Lambda^{s_0}v_0|^2\DIV v_1 + \|v_0\|_{H^{s_0}}\|[\Lambda^{s_0}, (v_1\cdot\nabla)]v_0\|_{L^2}+ \<\Lambda^{s_0}v_0,\Lambda^{s_0}R\>.
		\end{split}\]
		Proposition \ref{prop:KatoPonce} ensures that
		\[\begin{split}
			\pa_t\frac{\|v_0\|_{H^{s_0}}^2}{2}
			%&\leq c\|v_0\|_{H^{s_0}}^2\|\DIV v_1\|_{L^\infty}+ c\|v_0\|_{H^{s_0}}(\|\nabla v_1\|_{L^\infty}\|\nabla v_0\|_{H^{s_0-1}}+\|v_1\|_{H^{s_0}}\|\nabla v_0\|_{L^\infty})+ \<\Lambda^{s_0}v_0,\Lambda^{s_0}R\>\\
			&\leq c\(\|v_0\|_{H^{s_0}}^2\|\nabla v_1\|_{L^\infty}+ \|v_0\|_{H^{s_0}}\|v_1\|_{H^{s_0}}\|\nabla v_0\|_{L^\infty}\)+ \<\Lambda^{s_0}v_0,\Lambda^{s_0}R\>.
		\end{split}\]
	\end{proof}
	\begin{lemma}\label{eq:transport}
		Let $s_0>d/2+1$. There exists $C>0$ such that for all 
		$A\in  W^{1,\infty}([0,h_0],H^{s_0}(\RR^d))$, 
		$v_1\in L^\infty([0,h_0],H^{s_0+1}(\RR^d)^d)$ and $R\in L^\infty([0,h_0],H^{s_0}(\RR^d))$ satisfying 
		\[
			\pa_t A + v_1\cdot \nabla A + A\frac{\DIV v_1}{2} = R,
		\]
		we have,
		\[\begin{split}
			\pa_t\|A\|_{H^{s_0}}^2
			&\leq C\(\|A\|_{H^{s_0}}^2\| v_1\|_{W^{2,\infty}}+\|A\|_{H^{s_0}}\|v_1\|_{H^{s_0+1}}\|A\|_{W^{1,\infty}}\)
			%\\
			%&\qquad
			+ 2\RE \<\Lambda^{s_0}A,\Lambda^{s_0}R\>\\
			&\leq C\|A\|_{H^{s_0}}^2\|v_1\|_{H^{s_0+1}}+ 2\RE \<\Lambda^{s_0}A,\Lambda^{s_0}R\>.
		\end{split}\]

	\end{lemma}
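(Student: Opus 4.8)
The proof will follow the same energy method as Lemma \ref{eq:burger}, the only structural novelty being the zeroth-order term $\frac{1}{2}A\,\DIV v_1$, which is precisely what forces one extra derivative on the transport field (hence the hypothesis $v_1\in H^{s_0+1}$ rather than $v_1\in H^{s_0}$). I would start from
\[
\pa_t\frac{\|A\|_{H^{s_0}}^2}{2}=\RE\<\Lambda^{s_0}A,\Lambda^{s_0}\pa_t A\>=-\RE\<\Lambda^{s_0}A,\Lambda^{s_0}(v_1\cdot\na A)\>-\frac{1}{2}\RE\<\Lambda^{s_0}A,\Lambda^{s_0}(A\,\DIV v_1)\>+\RE\<\Lambda^{s_0}A,\Lambda^{s_0}R\>,
\]
the last term being already in the desired form, and then estimate the first two terms separately.

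For the transport term I would argue exactly as in Lemma \ref{eq:burger}: write $\Lambda^{s_0}(v_1\cdot\na A)=(v_1\cdot\na)\Lambda^{s_0}A+[\Lambda^{s_0},v_1\cdot\na]A$, integrate by parts in $-\RE\<\Lambda^{s_0}A,(v_1\cdot\na)\Lambda^{s_0}A\>=\frac{1}{2}\int_{\RR^d}|\Lambda^{s_0}A|^2\DIV v_1$, and control the commutator componentwise through Proposition \ref{prop:KatoPonce} (applied with $f=v_1^{(j)}\in H^{s_0}$ and $g=\pa_j A\in H^{s_0-1}$), which gives $\|[\Lambda^{s_0},v_1\cdot\na]A\|_{L^2}\le c(\|\na v_1\|_{L^\infty}\|A\|_{H^{s_0}}+\|v_1\|_{H^{s_0}}\|\na A\|_{L^\infty})$. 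By Cauchy--Schwarz this contributes a term bounded by a constant times $\|A\|_{H^{s_0}}^2\|v_1\|_{W^{1,\infty}}+\|A\|_{H^{s_0}}\|v_1\|_{H^{s_0}}\|A\|_{W^{1,\infty}}$, which fits inside the first right-hand side of the lemma.

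For the new term I would split $\Lambda^{s_0}(A\,\DIV v_1)=A\,\Lambda^{s_0}\DIV v_1+[\Lambda^{s_0},A]\DIV v_1$. The first piece is handled by Cauchy--Schwarz, $|\<\Lambda^{s_0}A,A\,\Lambda^{s_0}\DIV v_1\>|\le\|A\|_{L^\infty}\|A\|_{H^{s_0}}\|\DIV v_1\|_{H^{s_0}}\le C\|A\|_{W^{1,\infty}}\|A\|_{H^{s_0}}\|v_1\|_{H^{s_0+1}}$; for the commutator, Proposition \ref{prop:KatoPonce} (now with $f=A\in H^{s_0}$ and $g=\DIV v_1\in H^{s_0-1}$) gives $\|[\Lambda^{s_0},A]\DIV v_1\|_{L^2}\le c(\|\na A\|_{L^\infty}\|\DIV v_1\|_{H^{s_0-1}}+\|A\|_{H^{s_0}}\|\DIV v_1\|_{L^\infty})\le c(\|A\|_{W^{1,\infty}}\|v_1\|_{H^{s_0+1}}+\|A\|_{H^{s_0}}\|v_1\|_{W^{2,\infty}})$. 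Multiplying by $\|A\|_{H^{s_0}}$ and summing all contributions yields the first inequality of the lemma; the second one follows immediately from the Sobolev embeddings $H^{s_0}\hookrightarrow W^{1,\infty}$ and $H^{s_0+1}\hookrightarrow W^{2,\infty}$, both valid since $s_0>d/2+1$.

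The calculation is otherwise routine, so there is no real obstacle; the one point requiring attention is to keep the estimate in its sharp \emph{asymmetric} form, namely to distribute the derivatives in $\Lambda^{s_0}(A\,\DIV v_1)$ so that the high norm $\|v_1\|_{H^{s_0+1}}$ is always paired with a low norm of $A$ and $\|A\|_{H^{s_0}}$ with a low norm of $v_1$. This is exactly what will make the lemma usable when, later on, the two arguments of the flow are taken at different regularity levels, as in the Lipschitz-type stability estimate of Lemma \ref{lem:1}.
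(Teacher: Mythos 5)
Your proof is correct and follows essentially the same energy-method argument as the paper: commute $\Lambda^{s_0}$ through the equation, integrate by parts in the principal transport term, and control the commutators with the Kato--Ponce inequality. The only cosmetic difference is that the paper treats $v_1\cdot\nabla+\tfrac{1}{2}\DIV v_1$ as a single skew-adjoint operator whose principal part cancels exactly, whereas you bound the two leftover principal pieces separately; both yield the stated asymmetric estimate.
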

	\begin{proof}
		We have by integration by parts that
		\[\begin{split}
			\pa_t\frac{\|A\|_{H^{s_0}}^2}{2}&= \RE \<\Lambda^{s_0}A,\Lambda^{s_0}\pa_t A\> = -\RE \<\Lambda^{s_0}A,\(v_1\cdot \nabla  + \frac{\DIV v_1}{2}\)\Lambda^{s_0}A\>\\
			& \quad-\RE \<\Lambda^{s_0}A,\left[\Lambda^{s_0},\(v_1\cdot \nabla  + \frac{\DIV v_1}{2}\)\right]A\> + \RE \<\Lambda^{s_0}A,\Lambda^{s_0}R\>\\
			& \leq\|A\|_{H^{s_0}}\left\|\left[\Lambda^{s_0},\(v_1\cdot \nabla  + \frac{\DIV v_1}{2}\)\right]A\right\|_{L^2} + \RE \<\Lambda^{s_0}A,\Lambda^{s_0}R\>.
		\end{split}\]
		Proposition \ref{prop:KatoPonce} ensures that
		\[\begin{split}
			\pa_t\frac{\|A\|_{H^{s_0}}^2}{2}
			&\leq C\|A\|_{H^{s_0}}(\|\nabla v_1\|_{L^\infty}\|\nabla A\|_{H^{s_0-1}}+\|v_1\|_{H^{s_0}}\|\nabla A\|_{L^\infty})\\
			&\qquad+C\|A\|_{H^{s_0}}(\|\nabla (\DIV v_1)\|_{L^\infty}\|A\|_{H^{s_0-1}}+\|\DIV v_1\|_{H^{s_0}}\|A\|_{L^\infty})\\
			&\qquad+ \RE \<\Lambda^{s_0}A,\Lambda^{s_0}R\>\\
			&\leq C\(\|A\|_{H^{s_0}}^2\| v_1\|_{W^{2,\infty}}+\|A\|_{H^{s_0}}\|v_1\|_{H^{s_0+1}}\|A\|_{W^{1,\infty}}\)+ \RE \<\Lambda^{s_0}A,\Lambda^{s_0}R\>\\
			&\leq C\|A\|_{H^{s_0}}^2\|v_1\|_{H^{s_0+1}}+ \RE \<\Lambda^{s_0}A,\Lambda^{s_0}R\>.
		\end{split}\]
	\end{proof}
		%
		%
%=========================================================================
	\subsection{Study of the equation \eqref{eq:SClinear}}
	Let us prove Lemma \ref{lem:0}.
%	\begin{lemma}\label{lem:0}
%		Let $M>0$. There exist $h_1 = h_1(M)>0$ such that for any $\eps$ and any $u_0\in \Sigma_{s}$ satisfying 
%		\[
%			\|u_0\|_{{s}}\leq M,
%		\]
%		we have that the solution $\phi_t(u_0)$ of equation \eqref{eq:SClinear} is well-defined on $[0,h_1]$ and for all $t\in[0,h_1]$
%		\[
%			\|\phi_t(u_0)\|_{{s_0}}\leq 2M.
%		\]
%	\end{lemma}
	%
	%
	\begin{proof}
		By the Cole-Hopf transform, we get that $w^\eps = \exp\(-\frac{\se}{2\eps^2}\)-1$ is the solution of
		\[
			\pa_t w^\eps = \eps^2\Delta w^\eps + \frac{\mathcal{V}}{2\eps^2}\(w^\eps + 1\),\quad w^\eps(0) =  \exp\(-\frac{S_0}{2\eps^2}\)-1,
		\]
		Hence, global existence and uniqueness of the solution $\se$ of \eqref{eq:SClinearphase} for fixed $\eps\in(0,\eps_{max}]$, follows from standard semi-group theory. The function $v^\eps = \nabla \se$ solves
		\[
			\pa_t v^\eps + (v^\eps\cdot \nabla) v^\eps + \nabla \mathcal{V} = \eps^2\Delta v^\eps.
		\]
		 Since $s>d/2$, Lemma \ref{eq:burger} and an integration by parts ensure that
		\[\begin{split}
			\pa_t\|v^\eps\|_{H^{s+1}}^2&\leq c\|v^\eps\|_{H^{s+1}}^3+ \<\Lambda^{s+1}v^\eps,\Lambda^{s+1} \(-\nabla \mathcal{V} + \eps^2\Delta v^\eps\)\>\\
			&\leq c\|v^\eps\|_{H^{s+1}}^3+ \|v^\eps\|_{H^{s+1}}\|\mathcal{V}\|_{H^{s+2}}.
		\end{split}\]
		By \eqref{eq:SClinearphase}, we also have that
		\[
			\pa_t \frac{\|\se\|_{L^2}^2}{2}\leq \|\se\|_{L^2}\(\|\mathcal{V}\|_{L^2} + \|v^\eps\|_{L^4}^2/2\)
		\]
		so that
		\[
			\pa_t \|\se\|_{H^{s+2}}^2\leq c\|\mathcal{V}\|_{H^{s+2}}\|\se\|_{H^{s+2}} + c\|\se\|_{H^{s+2}}^3.
		\]
		The global existence and the uniqueness of a solution $\ae$ of equation \eqref{eq:SClinearamplitude} follows from the fact that 
		\[
			\pe = \ae \exp\(i\se/\eps\)
		\] 
		satisfies equation \eqref{eq:GPElinear}.
		By Lemma \ref{eq:transport}, recalling that $s>d/2+1$, we also have
		\[\begin{split}
			\pa_t\|\ae\|^2_{H^{s}} \leq c\|\ae\|_{H^{s}}^2\|\se\|_{H^{s+2}}+ \RE \<\Lambda^{s}\ae,\Lambda^{s}R\>.
		\end{split}\]
		where $R = \frac{i\eps\Delta \ae}{2} -i\eps\ae\Delta\se$ so that an integration by parts gives us
		\[\begin{split}
			\pa_t\|\ae\|^2_{H^{s}} 
			&\leq c\|\ae\|_{H^{s}}^2\|\se\|_{H^{s+2}}.
		\end{split}\]
		 We obtain that
		 \[
		 	\pa_t\|\phi_{t}(u_0)\|^2_{s}\leq  c_1\|\phi_{t}(u_0)\|_{s}\|\mathcal{V}\|_{H^{s+2}} + c_2\|\phi_{t}(u_0)\|^3_{s}
		 \]
		 and
		\[
		 	\pa_t\|\phi_{t}(u_0)\|_{s}\leq  c_1\|\mathcal{V}\|_{H^{s+2}}+ c_2\|\phi_{t}(u_0)\|^2_{s}.
		 \]
		We get then that
		\[
			\|\phi_{t}(u_0)\|_{s}\leq \sqrt{\frac{c_1\|\mathcal{V}\|_{H^{s+2}}}{c_2}}\tan \(t\sqrt{c_1c_2\|\mathcal{V}\|_{H^{s+2}}} + \arctan \(M\sqrt{\frac{c_2}{c_1\|\mathcal{V}\|_{H^{s+2}}}}\)\)
		\]
		so that there is $h_1 = h_1(M)>0$ such that for all $0\leq t \leq h_1$
		\[
			\|\phi_{t}(u_0)\|_{s}\leq 2M.	
		\]
	\end{proof}
	The following result will be used several times and in particular for the proof of the stability of equation \eqref{eq:SClinear} in Lemma \ref{lem:1}. 		
	\begin{lemma}\label{lem:stabaux}
		Let $s_0>d/2+1$. Let $u_1 = (S_{1},A_1)$ be in $L^\infty([0,T],\Sigma_{s_0+1})$, $u_2 = (S_{2},A_2)$, $(R_{1,S},R_{1,A})$ and $(R_{2,S},R_{2,A})$ be in $ L^\infty([0,T],\Sigma_{s_0})$. Assume moreover that for $i = 1,2$
		\[\begin{split}
			\pa_t S_i + \frac{|\nabla S_i|^2}{2} = R_{i,S},\\
			\pa_t A_i  + \nabla S_i\cdot \nabla A_i + A_i \frac{\Delta S_i}{2} = R_{i,A}.
		\end{split}\]
		Then, we have
		\[\begin{split}
			&\pa_t\|u_1-u_2\|_{s_0}^2\leq c\|u_1-u_2\|_{s_0}^2\(\|u_1\|_{s_0+1}+ \|u_2\|_{s_0}\) + 2\<u_1-u_2,R_{1}-R_{2}\>_{s_0}
			%
			%\\
			%
			%& \qquad+\<S_1-S_2,R_{1,S}-R_{2,S}\> +  \<\Lambda^{s_0+1}\nabla \(S_1-S_2\),\Lambda^{s_0+1}\nabla \(R_{1,S}-R_{2,S}\)\>
			%
			%\\
			%
			%&\qquad+ \RE \<\Lambda^{s_0}B,\Lambda^{s_0}\(R_{1,A}-R_{2,A}\)\>.
			 %
		\end{split}\]
		where $R_i = (R_{i,S},R_{i,A})^T$.
	\end{lemma}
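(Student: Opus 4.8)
The plan is to derive an energy estimate for the difference $u_1 - u_2$ in the norm $\|\cdot\|_{s_0}$, i.e.\ for the quantity $\|u_1-u_2\|_{s_0}^2 = \|S_1-S_2\|_{L^2}^2 + \|\nabla(S_1-S_2)\|_{H^{s_0+1}}^2 + \|A_1-A_2\|_{H^{s_0}}^2$, by handling the three pieces separately. Write $\sigma = S_1 - S_2$, $v_i = \nabla S_i$, $\alpha = A_1 - A_2$, and subtract the two systems. For the phase, subtracting the eikonal-type equations gives $\pa_t \sigma + \nabla S_1\cdot\nabla\sigma + \tfrac12\nabla\sigma\cdot\nabla(S_1+S_2) = R_{1,S}-R_{2,S}$; equivalently, at the level of gradients, $\pa_t(\nabla\sigma) + (v_1\cdot\nabla)\nabla\sigma + ((\nabla\sigma)\cdot\nabla)v_2 = \nabla(R_{1,S}-R_{2,S})$. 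This is a transport equation for $\nabla\sigma$ with transport velocity $v_1$ (the $\Sigma_{s_0+1}$ solution) and a source that is bilinear in $\nabla\sigma$ and $\nabla v_2 = D^2 S_2$. Applying Lemma \ref{eq:burger} with $v_0 = \nabla\sigma$, $v_1 = \nabla S_1$, and remainder $R = \nabla(R_{1,S}-R_{2,S}) - ((\nabla\sigma)\cdot\nabla)v_2$, and estimating the extra term $\|((\nabla\sigma)\cdot\nabla)v_2\|_{H^{s_0+1}} \lesssim \|\nabla\sigma\|_{H^{s_0+1}}\|v_2\|_{H^{s_0+2}}$—wait, this would cost two derivatives on $S_2$. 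Instead one must keep $\Lambda^{s_0+1}\nabla\sigma$ paired against $\Lambda^{s_0+1}$ of the source and use the product/commutator structure so that the $D^2 S_2$ factor only needs to be controlled in $H^{s_0+1}\subset L^\infty$ for its top derivative: $\<\Lambda^{s_0+1}\nabla\sigma, \Lambda^{s_0+1}(((\nabla\sigma)\cdot\nabla) v_2)\> \lesssim \|\nabla\sigma\|_{H^{s_0+1}}^2\|v_2\|_{H^{s_0+2}}$, and $\|v_2\|_{H^{s_0+2}} = \|\nabla S_2\|_{H^{s_0+2}} \le \|u_2\|_{s_0}$ after one sees that $\|S_2\|_{H^{s_0+3}}$ is really what the norm $\|\cdot\|_{s_0}$ controls (since $\|S\|_{H^{s+2}}$ is part of $\|u\|_s$, and $\Sigma$ is defined with $s+2$ derivatives on $S$). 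That is the accounting that makes the asymmetry $u_1\in\Sigma_{s_0+1}$, $u_2\in\Sigma_{s_0}$ exactly balance. The $\|\sigma\|_{L^2}$ piece is handled directly: $\pa_t\tfrac12\|\sigma\|_{L^2}^2 = \<\sigma, R_{1,S}-R_{2,S}\> - \<\sigma, \tfrac12(v_1+v_2)\cdot\nabla\sigma\>$, and the quadratic term is $\lesssim \|\sigma\|_{L^2}^2(\|v_1\|_{L^\infty}+\|v_2\|_{L^\infty}) + \|\sigma\|_{L^2}\|\nabla\sigma\|_{L^2}(\dots)$ after one integration by parts, all absorbable into $\|u_1-u_2\|_{s_0}^2(\|u_1\|_{s_0+1}+\|u_2\|_{s_0})$.

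For the amplitude, subtract the two amplitude equations to get
\[
\pa_t\alpha + v_1\cdot\nabla\alpha + \alpha\frac{\DIV v_1}{2} = R_{1,A}-R_{2,A} - (\nabla\sigma)\cdot\nabla A_2 - A_2\frac{\DIV\nabla\sigma}{2}.
\]
This is precisely of the form treated by Lemma \ref{eq:transport}, with transport field $v_1 = \nabla S_1 \in H^{s_0+2}$ (so $v_1\in H^{(s_0)+1}$ in the notation of that lemma, using again that $\|S_1\|_{H^{s_0+3}}\le\|u_1\|_{s_0+1}$), amplitude $\alpha$, and right-hand side $R = (R_{1,A}-R_{2,A}) - (\nabla\sigma\cdot\nabla)A_2 - \tfrac12 A_2\Delta\sigma$. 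Lemma \ref{eq:transport} yields $\pa_t\|\alpha\|_{H^{s_0}}^2 \le C\|\alpha\|_{H^{s_0}}^2\|v_1\|_{H^{s_0+1}} + \RE\<\Lambda^{s_0}\alpha,\Lambda^{s_0}R\>$, and the two problematic source terms are controlled in $H^{s_0}$ by $\|\nabla\sigma\|_{H^{s_0}}\|A_2\|_{H^{s_0+1}} \lesssim \|\nabla\sigma\|_{H^{s_0+1}}\|A_2\|_{H^{s_0}}$ (Sobolev, since $H^{s_0}$ is an algebra and $A_2\in H^{s_0}\subset W^{1,\infty}$ is not quite enough, but $\|A_2\Delta\sigma\|_{H^{s_0}}\lesssim \|A_2\|_{H^{s_0}}\|\Delta\sigma\|_{H^{s_0}}$ with $\|\Delta\sigma\|_{H^{s_0}}\le\|\nabla\sigma\|_{H^{s_0+1}}$), hence by $\|u_1-u_2\|_{s_0}\,\|u_2\|_{s_0}$. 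The contribution $\RE\<\Lambda^{s_0}\alpha,\Lambda^{s_0}(R_{1,A}-R_{2,A})\>$ is part of the bilinear term $\<u_1-u_2,R_1-R_2\>_{s_0}$ appearing in the statement—one checks that the three inner products produced by the three norm-pieces (the $\<\Pi_1,\Pi_1\>$ term from the $L^2$ phase estimate, the $\<\Lambda^{s_0+1}\nabla\Pi_1,\Lambda^{s_0+1}\nabla\Pi_1\>$ term from the gradient estimate, and the $\RE\<\Lambda^{s_0}\Pi_2,\Lambda^{s_0}\Pi_2\>$ term from the amplitude estimate) reassemble exactly into $\<u_1-u_2,R_1-R_2\>_{s_0}$ as defined in \eqref{eq:produitscalaire}.

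Summing the three estimates and collecting all the quadratic-in-$(u_1-u_2)$ terms—each of which carries a factor that is either $\|u_1\|_{s_0+1}$ (whenever a $v_1$ or $S_1$ factor appears at top order) or $\|u_2\|_{s_0}$ (whenever an $A_2$, $v_2$, or $S_2$ factor appears at top order)—gives the claimed inequality $\pa_t\|u_1-u_2\|_{s_0}^2 \le c\|u_1-u_2\|_{s_0}^2(\|u_1\|_{s_0+1}+\|u_2\|_{s_0}) + \<u_1-u_2,R_1-R_2\>_{s_0}$. The main obstacle, and the only place where any care is needed, is the derivative bookkeeping in the phase estimate: the source term $((\nabla\sigma)\cdot\nabla)v_2$ in the equation for $\nabla\sigma$ a priori looks like it needs $v_2\in H^{s_0+2}$, i.e.\ $S_2\in H^{s_0+4}$, which would be far more regularity than $u_2\in\Sigma_{s_0}$ provides; the resolution is to never move $\Lambda^{s_0+1}$ onto the $v_2$-factor, but to use the algebra/Kato--Ponce structure so that only $\|D^2 S_2\|_{H^{s_0+1}}=\|v_2\|_{H^{s_0+2}}$, equivalently $\|S_2\|_{H^{s_0+3}}\le\|u_2\|_{s_0}$, is needed—and this is exactly the amount encoded in $\|u_2\|_{s_0}$ because $\|\cdot\|_s$ measures $S$ in $H^{s+2}$. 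This is also the structural reason, already flagged in the remark after Lemma \ref{lem:1}, that $u_1$ must be controlled one order higher than $u_2$: the "bad" top-order transport term in the equation for $\alpha$ (respectively $\nabla\sigma$) is transported by $v_1$, so closing the estimate by an integration by parts on the symmetric part costs a derivative on $v_1$ but not on $v_2$ or $A_2$.
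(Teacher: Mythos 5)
Your overall architecture matches the paper's: estimate the three pieces of $\|u_1-u_2\|_{s_0}^2$ separately via Lemmas \ref{eq:burger} and \ref{eq:transport} and reassemble the source pairings into $\<u_1-u_2,R_1-R_2\>_{s_0}$. But the one step you yourself flag as ``the only place where any care is needed'' is resolved incorrectly, and the same error recurs in the amplitude estimate. You split the quadratic terms as $(v_1\cdot\na)v_1-(v_2\cdot\na)v_2=(v_1\cdot\na)w+(w\cdot\na)v_2$ (transport by $v_1$, source involving $\na v_2=D^2S_2$), and then claim $\|v_2\|_{H^{s_0+2}}=\|\na S_2\|_{H^{s_0+2}}\le\|u_2\|_{s_0}$ ``since $\|S\|_{H^{s+2}}$ is part of $\|u\|_s$''. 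That is an off-by-one error: $\|u_2\|_{s_0}$ controls $\|S_2\|_{H^{s_0+2}}$, hence $\|v_2\|_{H^{s_0+1}}$, not $\|v_2\|_{H^{s_0+2}}$. No Kato--Ponce manipulation rescues the pairing $\<\Lambda^{s_0+1}w,\Lambda^{s_0+1}\((w\cdot\na)v_2\)\>$: the tame product estimate leaves a term $\|w\|_{L^\infty}\|\na v_2\|_{H^{s_0+1}}$ that genuinely needs $S_2\in H^{s_0+3}$, and integrating by parts only shifts the loss onto $\|w\|_{H^{s_0+2}}$. Likewise, in your amplitude equation the source $(\na\sigma\cdot\na)A_2$ requires $\na A_2\in H^{s_0}$, i.e.\ $A_2\in H^{s_0+1}$, while $u_2\in\Sigma_{s_0}$ only gives $A_2\in H^{s_0}$; the ``Sobolev'' inequality $\|\na\sigma\|_{H^{s_0}}\|A_2\|_{H^{s_0+1}}\lesssim\|\na\sigma\|_{H^{s_0+1}}\|A_2\|_{H^{s_0}}$ you invoke there is not a valid trade of derivatives between factors.

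The fix is the opposite splitting, which is what the paper does: write $(v_1\cdot\na)v_1-(v_2\cdot\na)v_2=(v_2\cdot\na)w+(w\cdot\na)v_1$ and, for the amplitudes, $\pa_tB+\na S_2\cdot\na B+\tfrac12 B\Delta S_2=-(w\cdot\na)A_1-\tfrac12 A_1\DIV w+(R_{1,A}-R_{2,A})$. Transporting the differences by the \emph{less} regular fields $v_2$, $\Delta S_2$ costs only $\|v_2\|_{H^{s_0+1}}\le\|u_2\|_{s_0}$ through the Kato--Ponce commutator, while the sources now carry derivatives of the \emph{more} regular functions: $\|(w\cdot\na)v_1\|_{H^{s_0+1}}\lesssim\|w\|_{H^{s_0+1}}\|v_1\|_{H^{s_0+2}}\le\|w\|_{H^{s_0+1}}\|u_1\|_{s_0+1}$ and $\|(w\cdot\na)A_1+\tfrac12A_1\DIV w\|_{H^{s_0}}\lesssim\|w\|_{H^{s_0+1}}\|A_1\|_{H^{s_0+1}}\le\|w\|_{H^{s_0+1}}\|u_1\|_{s_0+1}$. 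This is exactly why the hypotheses ask for $u_1\in\Sigma_{s_0+1}$ but only $u_2\in\Sigma_{s_0}$; your closing remark attributes the asymmetry to the integration by parts on the transport term, but that term is the benign one --- the asymmetry comes from which function the source $(w\cdot\na)(\cdot)$ lands on.
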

	\begin{proof}
		Let $s_0>d/2+1$. 
		Let us define $v_1 = \nabla S_1$, $v_2 = \nabla S_2$, $w = v_1-v_2$, $B = A_1-A_2$ and $u = u_1-u_2$.
		
		We have that
		\[\begin{split}
			\pa_t w  &= - (v_1\cdot \nabla)v_1 +  (v_2\cdot \nabla)v_2 +\nabla\(R_{1,S}-R_{2,S}\)\\
			& = - (v_2\cdot \nabla)w -  (w\cdot \nabla)v_1 +\nabla\(R_{1,S}-R_{2,S}\)
		\end{split}\]
		 and Lemma \ref{eq:burger} ensures that
		 \[\begin{split}
		 	\pa_t\|w\|_{H^{s_0+1}}^2&\leq c\|w\|_{H^{s_0+1}}^2\| v_2\|_{H^{s_0+1}}+ 2\<\Lambda^{s_0+1}w,\Lambda^{s_0+1}R\>.
		 \end{split}\]
		 where $R = -  (w\cdot \nabla)v_1 + \nabla\(R_{1,S}-R_{2,S}\).$  
		 We also have that
		 \[\begin{split}
			\|(w\cdot \nabla)v_1\|_{H^{s_0+1}}
			&\leq c\|w\|_{H^{s_0+1}}\|v_1\|_{H^{s_0+2}}  %+ \<\Lambda^{s_0+1}w,\Lambda^{s_0+1}\nabla \(R_{1,S}-R_{2,S}\)\>
		 \end{split}\]
		 and
		\[\begin{split}
		 	\pa_t\|w\|_{H^{s_0+1}}^2&\leq c\|w\|_{H^{s_0+1}}^2\(\| S_1\|_{H^{s_0+3}}+ \|S_2\|_{H^{s_0+2}}\) +  2\<\Lambda^{s_0+1}w,\Lambda^{s_0+1}\nabla \(R_{1,S}-R_{2,S}\)\>.
		 \end{split}\]
		 We also have
		 \[
		 	\pa_t (S_1-S_2) = -\frac{1}{2}(v_1+v_2)\cdot w+ \(R_{1,S}-R_{2,S}\)
		 \]
		 so that
		 \[
		 	\pa_t \|S_1-S_2\|_{L^2}^2\leq c\|S_1-S_2\|_{L^2}\|w\|_{L^{2}} \(\| S_1\|_{W^{1,\infty}}+ \|S_2\|_{W^{1,\infty}}\) + 2\<S_1-S_2,R_{1,S}-R_{2,S}\>
		 \]
		 and then
		 \[\begin{split}
		 	&\pa_t \|S_1-S_2\|_{H^{s_0+2}}^2
			\leq C\|S_1-S_2\|_{H^{s_0+2}}^2 \(\| S_1\|_{H^{s_0+3}}+ \|S_2\|_{H^{s_0+2}}\)
			\\
			&\quad+2\<S_1-S_2,R_{1,S}-R_{2,S}\> +  2\<\Lambda^{s_0+1}\nabla \(S_1-S_2\),\Lambda^{s_0+1}\nabla \(R_{1,S}-R_{2,S}\)\>
		 \end{split}\]
		 Let us study $B$, we have
		\[\begin{split}
			\pa_t B + \nabla S_2\cdot \nabla B + \frac{\Delta S_2}{2}B = R
		\end{split}\]
		where
		\[
			R = -w\cdot\nabla A_1 - \frac{\DIV (w)}{2}A_1 + \(R_{1,A}-R_{2,A}\)
		\]
		Hence, we obtain by Lemma \ref{eq:transport} 
		\[\begin{split}
			\pa_t\|B\|_{H^{s_0}}^2&\leq c\|B\|_{H^{s_0}}^2\|S_2\|_{H^{s_0+2}}+ 2\RE \<\Lambda^{s_0}B,\Lambda^{s_0}R\>\\
			&\leq c\|B\|_{H^{s_0}}^2\|S_2\|_{H^{s_0+2}}+  c\|B\|_{H^{s_0}}\|w\|_{H^{s_0+1}}\|A_1\|_{H^{s_0+1}} \\
			&\qquad + 2\RE \<\Lambda^{s_0}B,\Lambda^{s_0}\(R_{1,A}-R_{2,A}\)\>
		\end{split}\]
		and
		\[\begin{split}
			&\pa_t\|u_1-u_2\|_{s_0}^2\leq c\|u_1-u_2\|_{s_0}^2\(\|u_1\|_{s_0+1}+ \|u_2\|_{s_0}\)+2\<u_1-u_2,R_1-R_2\>_{s_0}
			%
			%\\
			%
			%& \qquad+\<S_1-S_2,R_{1,S}-R_{2,S}\> +  \<\Lambda^{s_0+1}\nabla \(S_1-S_2\),\Lambda^{s_0+1}\nabla \(R_{1,S}-R_{2,S}\)\>
			%
			%\\
			%
			%&\qquad+ \RE \<\Lambda^{s_0}\(A^\eps_1-A^\eps_2\),\Lambda^{s_0}\(R_{1,A}-R_{2,A}\)\>.
			 %
		\end{split}\]
		The result follows.
	\end{proof}
	Let us study now the stability of equation \eqref{eq:SClinear} and prove Lemma \ref{lem:1}.
	%
	%
%	\begin{lemma}\label{lem:1}
%		Let $M>0$. There exist $C_2 = C_2(M)>0$ suc1h that for any $\eps$, any solutions $\phi_t(u_0)\in L^\infty([0,T],\Sigma_{s+1})$ and $\phi_t(u_1)\in L^\infty([0,T],\Sigma_{s})$ of equation \eqref{eq:SClinear}, satisfying for all $t\in[0,T]$
%		\[
%			\| \phi_t(u_0)\|_{{s+1}}+ \|\phi_t(u_1)\|_{{s}}\leq M
%		\]
%		we have
%		\[
%			\|\phi_t(u_0)-\phi_t(u_1)\|_{{s}}\leq \| u_0-u_1(0)\|_{{s}} \exp(C_2t).
%		\]
%	\end{lemma}
	%
	%
	\begin{proof}
		Let $s>d/2+1$  and $\eps\in(0,\eps_{max}]$.
		Let us define for $i = 1,2$
		\[\begin{split}
			R_{i,S} &= -\mathcal{V} + \eps^2\Delta S_{i},
			\\
			R_{i,A} &=i\eps\frac{\Delta A_i}{2} -i\eps A_{i}\Delta S_i.
		\end{split}\]
		We apply Lemma \ref{lem:stabaux} with $s_0 = s$. 
		We have by integrations by parts that
		\[\begin{split}
			& \<S_1-S_2,R_{1,S}-R_{2,S}\>+  \<\Lambda^{s+1}\nabla \(S_1-S_2\),\Lambda^{s+1}\nabla \(R_{1,S}-R_{2,S}\)\> \leq 0,\\
		\end{split}\]
		and
		\[\begin{split}
			& \RE \<\Lambda^{s}\(A_1-A_2\),\Lambda^{s}\(R_{1,A}-R_{2,A}\)\>
			\leq c\|A_1-A_2\|^2_{H^{s}}\|S_1\|_{H^{s+2}}\\
			&\qquad\qquad+c\|A_1-A_2\|_{H^{s}}\|S_1-S_2\|_{H^{s+2}}\|A_2\|_{H^{s}}.
		\end{split}\]
		so that
		\[\begin{split}
			&\pa_t\|\phi_t(u_1)-\phi_t(u_2)\|_{s}^2\leq c\|u_1-u_2\|_{s}^2\(\|\phi_t(u_1)\|_{s+1}+ \|\phi_t(u_2)\|_{s}\)
		\end{split}\]
		and the result follows.
	\end{proof}
	%
	%
	%
	%=========================================================================
	\subsection{Study of the numerical flow $\varphi^{1234}$}
	The following lemma is inspired by the work of Holden, Lubich and Risebro \cite{2012_Holden_Lubich_Risebro}.
	\begin{lemma}\label{lem:flot1}
		Let $s_0>d/2+1$ and $M>0$. There exists $h_{5} = h_{5}(M)>0$ such that for any $u_0\in \Sigma_{s_0}$ satisfying $\|u_0\|_{s_0}\leq M$ and any $0\leq t\leq h_{5}$, the following two points are true.
		\begin{enumerate}[(i)]
			\item We have that $\|\varphi^1_{t}(u_0)\|_{s_0}\leq 2M$.
			\item Let $s_1\geq s_0$. There is $C_5 = C_5(M)>0$ such that if $u_0\in\Sigma_{s_1}$, then 
			\[\|\varphi^1_{t}(u_0)\|_{s_1}\leq \exp\(C_{5}t\)\|u_0\|_{s_1}.\]
		\end{enumerate}
	\end{lemma}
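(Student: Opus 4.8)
The plan is to exploit the explicit structure of the flow $\varphi^1$. Its phase component $S(t)$ is the classical (Hamilton--Jacobi) solution of \eqref{eq:SCflot1phase} built by characteristics; equivalently, $v:=\nabla S$ solves the inviscid Burgers system $\pa_t v + (v\cdot\nabla)v=0$. As recalled right after \eqref{eq:SCflot1}, $w:=A\,e^{iS}$ solves the free Schr\"odinger equation whenever $S$ solves the eikonal equation, so that $A(t)=e^{-iS(t)}\,e^{it\Delta/2}\bigl(A_0e^{iS_0}\bigr)$. In particular $\varphi^1_t(u_0)$ is well-defined on any time interval on which the eikonal equation has a smooth solution, and, by the Moser (composition) estimate for $e^{\pm iS}-1$ — licit because $S\in H^{\sigma}$ with $\sigma>d/2$ — together with the algebra property of $H^{s_0}$, $A(t)$ inherits the Sobolev regularity of $A_0$. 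Hence it suffices to establish \emph{a priori} Sobolev bounds for $(S(t),A(t))$.

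For the first assertion I would run two energy estimates at the index $s_0$. For the phase, Lemma~\ref{eq:burger} applied to the Burgers equation for $v=\nabla S$ (with $R=0$), together with the elementary bound $\pa_t\|S\|_{L^2}\lesssim\|\nabla S\|_{L^\infty}\|\nabla S\|_{L^2}$ coming from $\pa_t S=-\tfrac12|\nabla S|^2$, gives $\pa_t\|S\|_{H^{s_0+2}}^2\le C\|S\|_{H^{s_0+2}}^3$; comparison with the scalar ODE $\dot y=Cy^2$ then produces $h_5(M)>0$ such that the eikonal solution exists on $[0,h_5]$ and $\|S(t)\|_{H^{s_0+2}}\le\sqrt{2}\,M$ there. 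For the amplitude, Lemma~\ref{eq:transport} with $v_1=\nabla S$ and $R=\tfrac{i}{2}\Delta A$ gives $\pa_t\|A\|_{H^{s_0}}^2\le C\|A\|_{H^{s_0}}^2\|S\|_{H^{s_0+2}}+\RE\langle\Lambda^{s_0}A,\tfrac{i}{2}\Lambda^{s_0}\Delta A\rangle$, and the last term vanishes after an integration by parts since $\langle\Lambda^{s_0}A,\Lambda^{s_0}\Delta A\rangle=-\|\nabla\Lambda^{s_0}A\|_{L^2}^2\in\RR$; hence $\pa_t\|A\|_{H^{s_0}}^2\le 2CM\|A\|_{H^{s_0}}^2$ on $[0,h_5]$, and Gronwall gives $\|A(t)\|_{H^{s_0}}^2\le e^{2CMt}\|A_0\|_{H^{s_0}}^2$. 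After possibly shrinking $h_5(M)$, one obtains $\|\varphi^1_t(u_0)\|_{s_0}^2=\|S(t)\|_{H^{s_0+2}}^2+\|A(t)\|_{H^{s_0}}^2\le 2\|u_0\|_{s_0}^2\le 4M^2$, which is the first assertion.

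For the second assertion I would redo the same two computations at the higher index $s_1$, the key simplification being that, by the first assertion, on $[0,h_5]$ the low-order quantities $\|\nabla S(t)\|_{W^{2,\infty}}$ and $\|A(t)\|_{W^{1,\infty}}$ are all bounded by a constant $\kappa(M)$ (Sobolev embedding from $\|S(t)\|_{H^{s_0+2}},\|A(t)\|_{H^{s_0}}\le\sqrt{2}M$, which uses $s_0>d/2+1$). Then Lemma~\ref{eq:burger} at index $s_1+1$ controls $\pa_t\|\nabla S\|_{H^{s_1+1}}^2$ by $C\kappa(M)\|\nabla S\|_{H^{s_1+1}}^2$, the $L^2$ bound for $S$ contributes $C\kappa(M)\bigl(\|S\|_{L^2}^2+\|\nabla S\|_{H^{s_1+1}}^2\bigr)$, and Lemma~\ref{eq:transport} at index $s_1$ (once more with the $\tfrac{i}{2}\Delta A$ contribution vanishing) controls $\pa_t\|A\|_{H^{s_1}}^2$ by $C\kappa(M)\bigl(\|A\|_{H^{s_1}}^2+\|S\|_{H^{s_1+2}}^2\bigr)$. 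Working with the scalar product \eqref{eq:produitscalaire} and adding these three differential inequalities yields $\pa_t\langle\varphi^1_t(u_0),\varphi^1_t(u_0)\rangle_{s_1}\le C_5(M)\langle\varphi^1_t(u_0),\varphi^1_t(u_0)\rangle_{s_1}$ on $[0,h_5]$, so Gronwall gives the claimed $\|\varphi^1_t(u_0)\|_{s_1}\le e^{C_5(M)t}\|u_0\|_{s_1}$; persistence of regularity for the phase equation and for the linear-in-$A$ amplitude equation makes these formal estimates rigorous and shows $\varphi^1_t(u_0)\in\Sigma_{s_1}$.

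The main obstacle is the bookkeeping in this last step: one must check that every nonlinear factor produced by the Kato--Ponce commutator estimates behind Lemmas~\ref{eq:burger}--\ref{eq:transport} (Proposition~\ref{prop:KatoPonce}) is either a low-order quantity — controlled uniformly on $[0,h_5]$ by the first assertion, with a constant depending only on $M$ and $d,s_0$ — or a factor of the top-order norm $\|\cdot\|_{s_1}$ occurring linearly, so that the differential inequality closes with a rate $C_5$ independent of $s_1$; the ``tame'' structure of those two lemmas is exactly what makes this work, and is why the first assertion must be proved first. The remaining points — short-time well-posedness of the eikonal equation (no caustics on $[0,h_5]$), the mapping properties of $e^{\pm iS}$ on $H^{\sigma}$, and a routine mollification to legitimise the integrations by parts at the top regularity — are standard.
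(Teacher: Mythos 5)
Your proposal is correct and follows essentially the same route as the paper: a priori energy estimates for $\nabla S$ via Lemma~\ref{eq:burger} and for $A$ via Lemma~\ref{eq:transport} (with the $\tfrac{i}{2}\Delta A$ term killed by integration by parts), a Riccati-type inequality at index $s_0$ to get the $2M$ bound on a time $h_5(M)$, and then the tame structure of those two lemmas plus Sobolev embedding of the $s_0$-level bound to close a linear Gr\"onwall inequality at index $s_1$. The only cosmetic differences are that you treat phase and amplitude sequentially where the paper combines them into a single inequality $\pa_t\|\varphi^1_t(u_0)\|_{s_0}^2\le C\|\varphi^1_t(u_0)\|_{s_0}^3$, and that you justify well-posedness of the amplitude equation via the explicit free-Schr\"odinger representation, which the paper also invokes.
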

	\begin{proof}
		The existence of the solution $S$ of \eqref{eq:SCflot1phase} follows for instance from the method of characteristics. Lemma \ref{eq:burger} ensures that for $s>d/2+1$
		\[
			\pa_t\|\nabla S\|_{H^{s+1}}^2\leq c\|\nabla S\|_{H^{s+1}}^2\|\nabla (\nabla S)\|_{L^\infty}\leq C\|\nabla S\|_{H^{s+1}}^2\|S(t)\|_{W^{2,\infty}}.
		\]
		We also have
		\[
			\pa_t \|S\|_{L^2}^2\leq c\|S(t)\|_{L^2}\|\nabla S(t)\|_{L^4}^2
		\]
		so that
		\[
			\pa_t \|S\|_{H^{s+2}}^2\leq C\|S(t)\|_{H^{s+2}}^2\|S(t)\|_{W^{2,\infty}}.
		\]
		The remaining of the proof follows exactly the same lines as the one of Lemma \ref{lem:0}. By Lemma \ref{eq:transport} and an integration by parts, we have
		\[\begin{split}
			\pa_t\|A\|^2_{H^{s}} 
			&\leq C\(\|A\|_{H^{s}}^2\| S\|_{W^{3,\infty}}+\|A\|_{H^{s}}\|S\|_{H^{s+2}}\|A\|_{W^{1,\infty}}\)\\
			&\leq C\|\varphi^1_{t}(u_0)\|_{{s}}^2\|\varphi^1_{t}(u_0)\|_{W^{3,\infty}\times W^{1,\infty}}\\
		\end{split}\]
		and
		\[\begin{split}
			\pa_t\|\varphi^1_{t}(u_0)\|_{{s}}^2
			&\leq C\|\varphi^1_{t}(u_0)\|_{{s}}^2\|\varphi^1_{t}(u_0)\|_{W^{3,\infty}\times W^{1,\infty}}\\
			&\leq C\|\varphi^1_{t}(u_0)\|_{{s}}^3.
		\end{split}\]
		Taking $s = s_0$, we get that
		\[
			\|\varphi^1_{t}(u_0)\|_{{s_0}}
			%\leq \frac{\|u_0\|_{{s_0}}}{1-ct\|u_0\|_{{s_0}}}
			\leq \frac{M}{1-cMt}
		\]
		and there is $h_5= h_5(M)>0$ such that for all $t\in[0,h_9]$
		\[
			\|\varphi^1_{t}(u_0)\|_{{s_0}}\leq 2M.
		\]
		We also obtain for $s = s_1\geq s_0>d/2+1$ and $t\in [0,h_9]$ that
		\[\begin{split}
			\pa_t\|\varphi^1_{t}(u_0)\|_{{s_1}}^2
			&\leq C\|\varphi^1_{t}(u_0)\|_{{s_1}}^2\|\varphi^1_{t}(u_0)\|_{{s_0}}
			\\
			&\leq 2CM\|\varphi^1_{t}(u_0)\|_{{s_1}}^2.
		\end{split}\]
		and the result follows from Gronwall's Lemma.
	\end{proof}
	We immediately get the following result for the second and the third flows.
	\begin{lemma}\label{lem:flot23}
		Let $s_0>0$ and $M>0$. There is $h_{6} = h_{6}(M)$ such that for any $u_0\in \Sigma_{s_0}$ satisfying $\|u_0\|_{s_0}\leq M$ any $0\leq t\leq h_{6}$, the following two points holds true.  
		\begin{enumerate}[(i)]
			\item $\|\varphi^2_{t}(u_0)\|_{s_0}\leq M$ and $\|\varphi^3_{t}(u_0)\|_{s_0}\leq 2M$,
			\item Let $s_1\geq 0$. If moreover $u_0\in\Sigma_{s_1}$, then, we have 
			\[\|\varphi^2_{t}(u_0)\|_{s_1}\leq \|u_0\|_{s_1}\mbox{ and }\|\varphi^3_{t}(u_0)\|_{s_1}\leq \|u_0\|_{s_1} + t\|\mathcal{V}\|_{H^{s_1+2}}.\]
		\end{enumerate}
	\end{lemma}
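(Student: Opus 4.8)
The plan is to exploit the fact that both flows are given in closed form, so that no fixed-point or energy-estimate argument is needed and the statement reduces to elementary computations; this is why the text says the result is obtained ``immediately''.

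First I would treat $\varphi^2_t$. Since $\pa_t S = 0$ the phase is frozen, $S(t) = S(0)$, while the amplitude equation $\pa_t A = \tfrac{i(\eps-1)}{2}\Delta A$ is solved in Fourier by $\widehat{A}(t,\xi) = e^{-i(\eps-1)|\xi|^2 t/2}\,\widehat{A}(0,\xi)$, i.e. $A(t) = e^{i(\eps-1)t\Delta/2}A(0)$, a Fourier multiplier of modulus one. By Plancherel, $\|A(t)\|_{H^{s_1}} = \|A(0)\|_{H^{s_1}}$ for every $s_1 \ge 0$ (uniformly in $\eps$ and $t$), and trivially $\|S(t)\|_{H^{s_1+2}} = \|S(0)\|_{H^{s_1+2}}$. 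Hence $\|\varphi^2_t(u_0)\|_{s_1} = \|u_0\|_{s_1}$ identically, which yields the bound in (i) with the constant $M$ and the bound in (ii); no smallness of $t$ is needed for $\varphi^2$.

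Next, for $\varphi^3_t$ one has $\pa_t A = 0$, so $A(t) = A(0)$, and $\pa_t S = -\mathcal{V}$, so $S(t) = S(0) - t\mathcal{V}$. Writing $\varphi^3_t(u_0) = u_0 + (-t\mathcal{V},0)$ and applying the triangle inequality for the norm $\|\cdot\|_{s_1}$ on $\Sigma_{s_1} = H^{s_1+2}(\RR^d)\times H^{s_1}(\RR^d)$ gives $\|\varphi^3_t(u_0)\|_{s_1} \le \|u_0\|_{s_1} + t\|\mathcal{V}\|_{H^{s_1+2}}$, which is precisely the claim in (ii) (note that $\mathcal{V}$ is smooth, hence lies in $H^{s_1+2}(\RR^d)$, so the right-hand side is finite). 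For (i) I specialize to $s_1 = s_0$, obtaining $\|\varphi^3_t(u_0)\|_{s_0} \le M + t\|\mathcal{V}\|_{H^{s_0+2}}$, and it then suffices to set $h_6 = h_6(M) := M/(1+\|\mathcal{V}\|_{H^{s_0+2}})$ (any positive constant if $\mathcal{V} \equiv 0$) to guarantee $\|\varphi^3_t(u_0)\|_{s_0} \le 2M$ for $0 \le t \le h_6$.

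There is essentially no obstacle here; the only points requiring a little care are (a) that the $H^s$-invariance of the Schr\"odinger-type propagator is an \emph{equality} coming from $|e^{-i(\eps-1)|\xi|^2 t/2}| = 1$ together with Plancherel, valid uniformly in $\eps\in(0,\eps_{max}]$ and in $t$, and (b) using the triangle inequality in the product norm for $\varphi^3$ rather than squaring and expanding, which would generate a spurious cross term. With these observations the lemma follows at once, in contrast with Lemma \ref{lem:flot1} where a Gr\"onwall argument was required.
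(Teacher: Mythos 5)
Your proof is correct and is exactly the elementary argument the paper has in mind when it states that the result follows ``immediately'': $\varphi^2_t$ is a unitary Fourier multiplier composed with the identity on the phase, so all $\Sigma_{s_1}$-norms are conserved, and $\varphi^3_t$ is the affine shift $u_0\mapsto u_0+(-t\mathcal{V},0)$, handled by the triangle inequality with $h_6$ chosen so that $h_6\|\mathcal{V}\|_{H^{s_0+2}}\le M$. Nothing further is needed.
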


	The following lemma study the fourth flow.
	\begin{lemma}\label{lem:flot4}
		Let $s_0>d/2+1$ and $M>0$. There exists $h_{7} = h_{7}(M)>0$ such that for any $u_0\in \Sigma_{s_0}$ satisfying $\|u_0\|_{s_0}\leq M$ and any $0\leq t\leq h_{7}$, the following two points holds true. 
		\begin{enumerate}[(i)]
			\item $\|\varphi^4_{t}(u_0)\|_{s_0}\leq 2M$,
			\item Let $s_1\geq s_0$. There is $C_{7} = C_{7}(M)>0$ such that if $u_0\in \Sigma_{s_1}$,
			\[\|\varphi^4_{t}(u_0)\|_{s_1}\leq \exp\(C_{7}t\)\|u_0\|_{s_1}.\]
		\end{enumerate}
	\end{lemma}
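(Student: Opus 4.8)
The plan is to exploit that both equations defining $\varphi^4_h$ are explicitly solvable. Equation \eqref{eq:SCflot4phase} is the heat equation $\pa_t S=\eps^2\Delta S$, solved by $S(t)=e^{t\eps^2\Delta}S_0$; on the Fourier side the multiplier is $e^{-t\eps^2|\xi|^2}\in[0,1]$, so the heat semigroup is a contraction on every $H^\sigma$, uniformly in $\eps\in(0,\eps_{max}]$ and $t\ge0$, and it gains two derivatives: $\|\Delta e^{t\eps^2\Delta}g\|_{H^\sigma}\le\|g\|_{H^{\sigma+2}}$ with a uniform constant. Since $\Delta S=\eps^{-2}\pa_t S$, equation \eqref{eq:SCflot4amplitude} integrates pointwise in $x$ to
\[
A(t) = A_0\,e^{-i\theta(t)}, \qquad \theta(t):=\eps^{-1}\big(S(t)-S_0\big)=\eps\int_0^t\Delta e^{\tau\eps^2\Delta}S_0\,d\tau,
\]
which is the formula recalled in the introduction for $\varphi^4_h$. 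Thus $\varphi^4_t(u_0)=(S(t),A(t))$ is globally well-defined, and the phase component never grows: $\|S(t)\|_{H^{s_1+2}}\le\|S_0\|_{H^{s_1+2}}$.

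Next I would control $\theta$ through the gain-of-two-derivatives bound, which gives $\|\theta(t)\|_{H^\sigma}\le\eps t\,\|S_0\|_{H^{\sigma+2}}$ for every $\sigma\ge0$. Taking $\sigma=s_0$ and using $\|u_0\|_{s_0}\le M$ yields $\|\theta(t)\|_{H^{s_0}}\le\eps_{max}t\,M$, hence $\|\theta(t)\|_{L^\infty}\le c\,\eps_{max}t\,M$ by $H^{s_0}\hookrightarrow L^\infty$; I fix $h_7=h_7(M)$ so small that $\|\theta(t)\|_{L^\infty}\le1$ for $t\le h_7$. Taking $\sigma=s_1$ yields $\|\theta(t)\|_{H^{s_1}}\le\eps_{max}t\,\|u_0\|_{s_1}$. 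Writing $A(t)=A_0+A_0(e^{-i\theta(t)}-1)$ and applying a standard composition (Moser) estimate to $F(z)=e^{-iz}-1$ (smooth, $F(0)=0$), I get $\|e^{-i\theta(t)}-1\|_{H^\sigma}\le C(\|\theta(t)\|_{L^\infty})\|\theta(t)\|_{H^\sigma}$ for $t\le h_7$, together with the trivial $\|e^{-i\theta(t)}-1\|_{L^\infty}\le\|\theta(t)\|_{L^\infty}$.

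For (i) I use that $H^{s_0}$ is an algebra: $\|A(t)\|_{H^{s_0}}\le\|A_0\|_{H^{s_0}}(1+c\,\eps_{max}t\,M)$, so shrinking $h_7$ further gives $\|A(t)\|_{H^{s_0}}\le2\|A_0\|_{H^{s_0}}$; combined with $\|S(t)\|_{H^{s_0+2}}\le\|S_0\|_{H^{s_0+2}}$ and $\|S_0\|_{H^{s_0+2}}^2+\|A_0\|_{H^{s_0}}^2\le M^2$ this gives $\|\varphi^4_t(u_0)\|_{s_0}^2\le\|S_0\|_{H^{s_0+2}}^2+4\|A_0\|_{H^{s_0}}^2\le4M^2$. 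For (ii) the algebra estimate is \emph{not} enough, since it would make the growth rate proportional to $\|u_0\|_{s_1}$ instead of to a function of $M$ alone; instead I would use the tame product estimate $\|fg\|_{H^{s_1}}\le C(\|f\|_{H^{s_1}}\|g\|_{L^\infty}+\|f\|_{L^\infty}\|g\|_{H^{s_1}})$ with $f=A_0$, $g=e^{-i\theta(t)}-1$, together with $\|A_0\|_{L^\infty}\le c\|A_0\|_{H^{s_0}}\le cM$. This gives $\|A(t)\|_{H^{s_1}}\le\|A_0\|_{H^{s_1}}(1+c_1 t)+c_2 t\,\|u_0\|_{s_1}$ with $c_1=c_1(M)$, $c_2=c_2(M)$. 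Squaring, bounding $\|A_0\|_{H^{s_1}}\le\|u_0\|_{s_1}$ in the cross and quadratic terms, and adding $\|S(t)\|_{H^{s_1+2}}^2\le\|S_0\|_{H^{s_1+2}}^2$, I obtain
\[
\|\varphi^4_t(u_0)\|_{s_1}^2\le e^{2c_1 t}\big(\|S_0\|_{H^{s_1+2}}^2+\|A_0\|_{H^{s_1}}^2\big)+(2c_2 t\,e^{c_1 t}+c_2^2 t^2)\|u_0\|_{s_1}^2\le e^{2c_1 t}(1+c_2 t)^2\,\|u_0\|_{s_1}^2,
\]
which is (ii) with $C_7=c_1+c_2$ (and reduces to $\|u_0\|_{s_1}$ at $t=0$, as it must since $\varphi^4_0=\id$).

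The step I expect to be the main obstacle is exactly what makes (ii) non-trivial: the amplitude equation is forced by $\Delta S$, i.e. by the \emph{top} two derivatives of $S$, which carry the full $\Sigma_{s_1}$-norm, so a careless estimate produces an exponential rate that blows up with $\|u_0\|_{s_1}$. The remedy is the structural bookkeeping above: keep $\|S(t)\|_{H^{s_1+2}}$ non-increasing, peel off the $L^\infty$ factor of $A_0$ (controlled by $M$ alone) via a tame rather than algebra product estimate so that all constants depend only on $M$, and recombine using $\|S_0\|_{H^{s_1+2}}^2+\|A_0\|_{H^{s_1}}^2=\|u_0\|_{s_1}^2$ so that no constant exceeding $1$ appears at $t=0$. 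An essentially equivalent route avoids the composition estimate altogether: differentiate $\|A(t)\|_{H^{s_1}}^2$ directly, note that the leading term cancels because $\Delta S$ is real, bound the commutator $\|[\Lambda^{s_1},\Delta S]A\|_{L^2}$ by Kato--Ponce (Proposition \ref{prop:KatoPonce}), and apply the same repackaging to the resulting Gr\"onwall inequality.
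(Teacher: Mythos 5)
Your proof is correct, but it follows a genuinely different route from the paper's. The paper never uses the closed-form solution: it runs an energy estimate directly on \eqref{eq:SCflot4}, noting that $\pa_t\|S\|_{H^{s+2}}^2\le 0$ for the heat part and that in $\pa_t\|A\|_{H^{s}}^2$ the leading term $\RE\<\Lambda^{s}A,-i\eps\,\Delta S\,\Lambda^{s}A\>$ vanishes because $\Delta S$ is real, so only the commutator $[\Lambda^{s},-i\eps\Delta S]A$ survives; Kato--Ponce (Proposition \ref{prop:KatoPonce}) then yields the tame differential inequality $\pa_t\|\varphi^4_t(u_0)\|_{s_1}^2\le c\,\|\varphi^4_t(u_0)\|_{s_1}^2\|\varphi^4_t(u_0)\|_{s_0}$, and Gr\"onwall closes both points with constants depending only on $M$ -- this is precisely the alternative you sketch in your last sentence. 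Your route instead exploits the explicit representation $A(t)=A_0e^{-i\theta(t)}$ with $\theta=\eps^{-1}(S(t)-S_0)$, the uniform two-derivative gain of the heat semigroup to get $\|\theta(t)\|_{H^{\sigma}}\le\eps t\,\|S_0\|_{H^{\sigma+2}}$, and a Moser composition estimate combined with a tame product estimate; the crucial bookkeeping (peel off $\|A_0\|_{L^{\infty}}\lesssim M$ so that the $H^{s_1}$ growth rate depends only on the low norm) plays exactly the role that the commutator structure plays in the paper. What your approach buys is explicitness and the stronger quantitative information that the amplitude is perturbed only by $O(\eps t)$ uniformly in $\eps\in(0,\eps_{max}]$; what the paper's approach buys is brevity and uniformity of method, since it reuses the transport/Burgers machinery (Lemmas \ref{eq:burger} and \ref{eq:transport}) already deployed for the flows $\varphi^1$ and for \eqref{eq:SClinear}, and it extends to situations where no closed-form solution is available. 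Both arguments correctly deliver $h_7=h_7(M)$ and $C_7=C_7(M)$ independent of $\eps$.
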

	\begin{proof}
		Let $s>d/2+1$.
		By integration by parts, we have $\pa_t\|S(h)\|_{H^{s+2}}^2\leq 0$ and
		\[\begin{split}
			&\pa_t\frac{\|A\|^2_{H^{s}}}{2} =  \RE\<\Lambda^{s}A,\Lambda^{s}\(-i\eps A\Delta S\)\> = \RE\<\Lambda^{s}A,[\Lambda^{s},-i\eps \Delta S]A\>\\
			&\leq c\|A\|_{H^{s}}\(\|\nabla (\Delta S)\|_{L^\infty}\|A\|_{H^{s-1}}+\|\Delta S\|_{H^{s}}\|A\|_{L^\infty}\)\\
			&\leq c\|A\|_{H^{s}}^2\|S\|_{W^{3,\infty}}+c\|A\|_{H^{s}}\|S\|_{H^{s+2}}\|A\|_{L^\infty}.
		\end{split}\]
		We obtain for $s = s_0$ that
		\[
			\pa_t \|\varphi^4_t(u_0)\|_{s_0}^2\leq c \|\varphi^4_t(u_0)\|_{s_0}^3,
		\]
		for $s = s_1$ that
		\[
			\pa_t \|\varphi^4_t(u_0)\|_{s_1}^2\leq c \|\varphi^4_t(u_0)\|_{s_1}^2\|\varphi^4_t(u_0)\|_{s_0}
		\]
		and the result follows from the arguments of the end of the proof of Lemma \ref{lem:flot1}.
	\end{proof}
	Taking $s_0 = s$ and $s_1 = s_0+2$, we immediately get Lemma \ref{lem:2}  combining Lemmas \ref{lem:flot1}, \ref{lem:flot23} and \ref{lem:flot4}.
	\subsection{Proof of Theorem \ref{thm:existencecontrole}}\label{sec:thm:existencecontrole}
	%
	%
	%
	%
%	We introduce the norm $\|\cdot\|_{s}$ defined for $s>d/2+1$ and
%    \[
%        u = \(\begin{array}{c}S \\ A \end{array}\) \mbox{ by }\left\|u\right\|_s = \(\|S\|_{L^2(\RR^d)}^2+\|\nabla S\|_{H^{s+1}(\RR^d)}^2 + \|A\|_{H^s(\RR^d)}^2\)^{1/2},
%    \]
%    with $\Sigma_s = H^{s+2}(\RR^d)\times H^s(\RR^d)$. We denote by $\phi_h^\eps$ the flow at time $h$ of \eqref{eq:SClinear} and     
%    \be\label{bound}
%            M_{s}^{\eps}(T) := \sup\{\|\phi_t^\eps(u_0)\|_{s}:\; 0\leq t\leq T\}.
%    \ee
%    %
%    for $\eps\geq0$ and $T\geq0$.
    %
    %
    %In this section, we will need the following lemma.
    %
%    \begin{theorem}\label{thm:existencecontrole}
%    Let $\eps_{max}>0$, $s>d/2+1$ and $u_0\in\Sigma_{s+2}$. The following two points are true.
%    %
%    \begin{enumerate}[(i)]
%        %
%        \item The quantity \be\label{eq:Tmax}
%            T_{max} = \sup\{t>0 :\phi^0(u_0)\in L^\infty([0,t];\Sigma_{s+2})\}
%        \ee
%        is well-defined and positive.
%        %
%        \item Let $0<T<T_{max}$. For all $\eps\in(0,\eps_{max}]$, there exists a unique  solution
%        \[
%            \phi^\eps(u_0)\in C([0,T], \Sigma_{s+2})
%        \]
%        %
%         of system of equations \eqref{eq:SClinear}. Moreover, $\phi^\eps(u_0)$ is bounded in
%        \[
%            C([0,T],\Sigma_{s+2})
%        \]
%     uniformly in $\eps\in[0,\eps_{max}]$.
%    %
%    %
%    %
%    \end{enumerate}
%    \end{theorem}
    %
    %
    %\begin{proof}%\label{rem:bound1}
		Let $M>0$. Lemma \ref{lem:0} ensures that there is $h_1 = h_1(M)>0$ such that for any $\eps\in(0,\eps_{max}]$ and any $u_0\in\Sigma_{s+2}$ satisfying $\|u_0\|_{s+2}\leq M$, the solutions $t\mapsto\phi_t^\eps(u_0)$ of equation \eqref{eq:SClinear} are well-defined in $L^\infty([0,h_1],\Sigma_{s+2})$ and uniformly bounded with respect to $\eps$.

		Let $\eps,\eps'\in(0,\eps_{max}]$, $u_0,u_0'\in \Sigma_{s+2}$ such that $\|u_0\|_{s+2}\leq M$ and  $\|u_0'\|_{s+2}\leq M$. We define $(S^\eps,A^\eps)^T = \phi^\eps(u_0)$, $(S^{\eps'},A^{\eps'})^T = \phi^{\eps'}(u_0')$ and
		\[\begin{split}
			&R_{1,S} = -\mathcal{V} + \eps^2\Delta S^\eps, \quad R_{2,S} = -\mathcal{V} + \eps'^2\Delta S^{\eps'},\\
			&R_{1,A} =i\eps\frac{\Delta A^\eps}{2} -i\eps A^\eps\Delta S^\eps, \quad R_{2,A} =i\eps'\frac{\Delta A^{\eps'}}{2} -i\eps' A^{\eps'}\Delta S^{\eps'}.
		\end{split}\]
		We apply Lemma \ref{lem:stabaux} with $s_0 = s$, $u_1 = \phi^\eps(u_0)$ and $u_2 = \phi^{\eps'}(u_0)$. 
		We have by integrations by parts that
		\[\begin{split}
			& \<S^\eps-S^{\eps'},R_{1,S}-R_{2,S}\>+  \<\Lambda^{s+1}\nabla \(S^\eps-S^{\eps'}\),\Lambda^{s+1}\nabla \(R_{1,S}-R_{2,S}\)\> 
			\\
			&\quad\leq c|\eps-\eps'|\|S^\eps-S^{\eps'}\|_{H^{s+2}}\|S^\eps\|_{H^{s+4}},\\
		\end{split}\]
		and
		\[\begin{split}
			& \RE \<\Lambda^{s}\(A^\eps-A^{\eps'}\),\Lambda^{s}\(R_{1,A}-R_{2,A}\)\>
			\leq c|\eps-\eps'|\|A^\eps-A^{\eps'}\|_{H^{s}}\|A^\eps\|_{H^{s+2}}\\
			&\quad+c\|A^\eps-A^{\eps'}\|_{H^{s}}^2\|S^{\eps'}\|_{H^{s+2}}+c\|A^\eps\|_{H^{s}}\|A^\eps-A^{\eps'}\|_{H^{s}}\|S^\eps-S^{\eps'}\|_{H^{s+2}}\\
			&\quad + c|\eps-\eps'|\|A^\eps-A^{\eps'}\|_{H^{s}}\|A^\eps\|_{H^s}\|S^\eps\|_{H^{s+2}}.
		\end{split}\]
		so that
		\[\begin{split}
			&\pa_t\|\phi_t^{\eps}(u_0)-\phi_t^{\eps'}(u_0')\|_{s}^2\leq c\|\phi_t^{\eps}(u_0)-\phi_t^{\eps'}(u_0')\|_{s}^2\(\|\phi_t^{\eps}(u_0)\|_{s+1} + \|\phi_t^{\eps'}(u_0')\|_{s}\)\\
			&\quad\quad+ c|\eps-\eps'|\|\phi_t^{\eps}(u_0)-\phi_t^{\eps'}(u_0')\|_{s}(\|\phi_t^{\eps}(u_0)\|_{s+2}+\|\phi_t^{\eps}(u_0)\|_{s}^2).
		\end{split}\]
	 	Gronwall's Lemma ensures that for all $t\in[0,h_1]$
		\be\label{eq:inequSClinear}
			\|\phi_t^{\eps}(u_0)-\phi_t^{\eps'}(u_0')\|_{s}\leq C\(\|u_0-u_0'\|_{s} + |\eps-\eps'|\)
		\ee
		where 
		\[
			C = C(\|\phi^{\eps}(u_0)\|_{L^\infty([0,h_1],\Sigma_{s+2})},\|\phi^{\eps'}(u_0')\|_{L^\infty([0,h_1],\Sigma_{s})})>0.
		\]
		Thus, 
		$
			(\phi_t^\eps(u_0))_{t\in[0,h_1]}
		$
		is a Cauchy sequence of $\eps$ of $L^\infty([0,h_1],\Sigma_{s})$.
		The limit $\phi^0(u_0)$ is solution of \eqref{eq:SClinear} with $\eps = 0$. Uniqueness follows from \eqref{eq:inequSClinear}. We get immediately that Lemma \ref{lem:0} is also true for $\eps = 0$ and  $\phi^0(u_0)\in L^\infty([0,h_1],\Sigma_{s+2})$.

		Let 
		\[
			T_{max} = \sup\{t>0 :\phi^0(u_0)\in L^\infty([0,t];\Sigma_{s+2})\}>0,
		\]
		then, for any $0<T<T_{max}$, $\phi^0(u_0)\in L^\infty([0,T];\Sigma_{s+2})$.  
		Let us define %for all $s_0>0$
%		\[
%			M^{s_0}_{0} := \sup\{\|\phi_t^0(u_0)\|_{s_0}:\; 0\leq t\leq T\},
%		\]
		$\widetilde T = h_1(2M_{s}^0)$ (see Lemma \ref{lem:0} and \eqref{bound}), $C = C(M_{s+2}^0, 2M_{s}^0)$ (see inequality \eqref{eq:inequSClinear}) and $N$ the smallest $n\in \NN$ such that
		\[
			n\widetilde T\geq T.
		\]
		Let $\eps_0>0$ be such that $\eps_0\sum_{j=1}^NC^j\leq M_s^0$ and $\eps\in(0,\eps_0]$.
		By inequality \eqref{eq:inequSClinear} and Lemma \ref{lem:0}, we obtain by induction on $0\leq k\leq N$ that
		\[
			\|\phi_t^{\eps}(u_0)\|_{s}\leq \|\phi_t^{0}(u_0)\|_{s} + \|\phi_t^{0}(u_0)-\phi_t^{\eps}(u_0)\|_{s}\leq M_s^0 + \eps\sum_{j=1}^kC^j\leq M_s^0\leq 2M_{s}^0
		\]
		for all $t\in[0,k\widetilde T]$. Thus,
		$\phi^\eps(u_0)$ is well-defined on $[0,T]$, belongs to  $L^\infty([0,T];\Sigma_{s})$ and
		\be\label{eq:controluniform0}
			\|\phi^\eps(u_0)\|_{L^\infty([0,T];\Sigma_{s})}\leq 2M_{s}^{0}.
		\ee
		Following the arguments of the proofs of Lemmas \ref{lem:flot1}, \ref{lem:flot23} and \ref{lem:flot4}, we obtain that 
		\[
			\pa_t \|\phi^\eps(u_0)\|_{s+2}^2\leq c\|\phi^\eps(u_0)\|_{s+2}^2\|\phi^\eps(u_0)\|_{s} + \|\phi^\eps(u_0)\|_{s+2}\|\mathcal{V}\|_{H^{s+4}}.
		\]
		Gronwall's lemma ensures that	there is $\widetilde C = \widetilde C(M^{s}_{0})>0$ independent of $\eps$ such that 
		\[
			\|\phi_t^\eps(u_0)\|_{s+2}\leq \exp(t\widetilde C)\(t\|\mathcal{V}\|_{H^{s+4}}+ \|u_0\|_{s+2}\)
		\]
		for all $t\in[0,T]$. Moreover, $\phi^\eps(u_0)$ is well-defined in $L^\infty([0,T],\Sigma_{s+2})$ for any $\eps\in(0,\eps_{max}]$. Then, the same arguments ensure that $\eps\in(0,\eps_{max}]\mapsto \phi^\eps(u_0)$ is continuous in $L^\infty([0,T],\Sigma_{s})$ so that $( \phi^\eps(u_0))_{\eps\in[0,\eps_{max}]}$ is uniformly bounded in $L^\infty([0,T],\Sigma_{s+2})$ and the result follows.
	%\end{proof}
	%
	%
	\subsection{The local error estimates}
	The proof of Lemma \ref{lem:3} given in this section is inspired by \cite{auzinger2015defect} where the two flows case is treated.
	The local error of scheme \eqref{scheme1} is defined by
	\[
		\mathscr{R}(h,u) = \varphi^{1234}_h(u) - \phi_h(u).
	\]

	\subsubsection{Main lemmas}
	Let us give the main ingredients that will be used in the proof of Lemma \ref{lem:3}.
	The balls in $\Sigma_{s_0}$ are denoted by
		\be\label{eq:boule}
		 B_{s_0}(M)= \{u\in \Sigma_{s_0}:\;\|u\|_{s_0}\leq M\}
		 \ee
		 for $s_0\geq0$ and $M>0$.
	 The strategy to get estimates on $\mathscr{R}(h,u)$ is to differentiate $\mathscr{R}$ with respect to $h$. Hence, we will be in need of the following lemma whose proof is postponed to Appendix \ref{sec:lemdiff}.
	\begin{lemma}\label{lem:flotdiff}
			Let $s>d/2+1$ and $M>0$. There exists $h_{8} = h_{8}(M)>0$ such that the following two points hold true.
			\begin{enumerate}[(i)]
				\item \label{lempt11} Let $s_1\geq s$. The functions 
				\[\begin{split}
					&(h,u)\in [0,h_{8}]\times\(B_{s}(M)\cap \Sigma_{s_1+3}\)\mapsto \varphi^1_h(u)\in\Sigma_{s_1},
					\\
					&(h,u)\in [0,h_{8}]\times\(B_{s}(M)\cap \Sigma_{s_1+2}\)\mapsto \varphi^2_h(u)\in\Sigma_{s_1},
					\\
					&(h,u)\in [0,h_{8}]\times\(B_{s}(M)\cap \Sigma_{s_1}\)\mapsto \varphi^3_h(u)\in\Sigma_{s_1},
				\end{split}\]
				are $C^1$-applications.
				\item \label{lempt12} Let $s_2\geq s$ and $M_2>0$. There exists $C_{8} = C_{8}(M,M_2)>0$ such that for any $u\in B_{s}(M)\cap B_{s_2+1}(M_2)$, $h\in[0,h_8]$ and any $u_0\in \Sigma_{s_2}$, we have
%				%
				\[\begin{split}
					&\|\pa_2\varphi^1_h(u)\cdot u_0\|_{s_2}\leq \exp\(C_{8}h\)\|u_0\|_{s_2},\\
					&\|\pa_2\varphi^2_h(u)\cdot u_0\|_{s_2}\leq \exp\(C_{8}h\)\|u_0\|_{s_2},\\
					&\|\pa_2\varphi^3_h(u)\cdot u_0\|_{s_2}\leq \exp\(C_{8}h\)\|u_0\|_{s_2},
				\end{split}\]
				and
				\[\begin{split}
					&|\< u_0, D\mathcal{N}_1(\varphi^1_h(u))\cdot u_0\>_{s_2}|\leq C_{8}\|u_0\|^2_{s_2},\\
					&|\< u_0, D\mathcal{N}_2(\varphi^2_h(u))\cdot u_0\>_{s_2}|\leq C_{8}\|u_0\|^2_{s_2},\\
					&|\< u_0, D\mathcal{N}_3(\varphi^3_h(u))\cdot u_0\>_{s_2}|\leq C_{8}\|u_0\|^2_{s_2},
				\end{split}\]
		\end{enumerate}
		where $\<\cdot,\cdot\>_{s_0}$ is defined in \eqref{eq:produitscalaire} and $B_{s_0}(M)$ in \eqref{eq:boule}.
	\end{lemma}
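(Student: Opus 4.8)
The plan is to treat the three flows separately, since $\varphi^2$ and $\varphi^3$ are essentially explicit and only $\varphi^1$ requires real work. From \eqref{eq:SCflot3linear} one has the closed formula $\varphi^3_h(S,A)=(S-h\mathcal{V},A)$: it is affine in $u$ and affine in $h$, hence $C^\infty$ as a map $[0,h_8]\times\Sigma_{s_1}\to\Sigma_{s_1}$ (no loss of derivatives), $\partial_2\varphi^3_h=\mathrm{id}$ gives $\|\partial_2\varphi^3_h(u)\cdot u_0\|_{s_1}=\|u_0\|_{s_1}$, and $\mathcal{N}_3\equiv(-\mathcal{V},0)$ being constant, $D\mathcal{N}_3\equiv 0$ so the quadratic-form bound is trivial. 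From \eqref{eq:SCflot2}, $\varphi^2_h(S,A)=(S,\,e^{i(\eps-1)h\Delta/2}A)$; the unitary group $h\mapsto e^{i(\eps-1)h\Delta/2}$ is strongly continuous on every $H^\sigma$ and differentiable in $h$ with generator $\tfrac{i(\eps-1)}{2}\Delta$, which costs two derivatives on the amplitude, whence the $C^1$ statement on $\Sigma_{s_1+2}$; being linear and unitary, $\partial_2\varphi^2_h=\varphi^2_h$ preserves $\|\cdot\|_{s_1}$, and since $D\mathcal{N}_2(v)\cdot u_0=(0,\tfrac{i(\eps-1)}{2}\Delta A_0)$, one integration by parts gives $\langle u_0,D\mathcal{N}_2(v)\cdot u_0\rangle_{s_1}=\RE\big(\tfrac{i(\eps-1)}{2}\langle\Lambda^{s_1}A_0,\Delta\Lambda^{s_1}A_0\rangle\big)=-\RE\big(\tfrac{i(\eps-1)}{2}\|\nabla\Lambda^{s_1}A_0\|_{L^2}^2\big)=0$. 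It remains to treat $\varphi^1$.

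For the derivative estimates (part (ii)) of $\varphi^1$, write $(S,A)=\varphi^1_h(u)$ and note that, linearising \eqref{eq:SCflot1} around $(S,A)$, the derivative $\delta u=(\sigma,\alpha)=\partial_2\varphi^1_h(u)\cdot u_0$ solves
\[
\partial_t\sigma+\nabla S\cdot\nabla\sigma=0,\qquad \partial_t\alpha+\nabla S\cdot\nabla\alpha+\tfrac{\Delta S}{2}\,\alpha=\tfrac{i}{2}\Delta\alpha-\nabla\sigma\cdot\nabla A-\tfrac{A}{2}\Delta\sigma,\qquad \delta u(0)=u_0 .
\]
The point is that this system has exactly the transport / transport--Schr\"odinger structure of Lemmas \ref{eq:burger} and \ref{eq:transport}: $\nabla\sigma$ solves $\partial_t\nabla\sigma+(\nabla S\cdot\nabla)\nabla\sigma=-(\nabla\sigma\cdot\nabla)\nabla S$, a direct $L^2$ estimate controls $\sigma$, and $\alpha$ obeys a transport--Schr\"odinger equation whose source $-\nabla\sigma\cdot\nabla A-\tfrac{A}{2}\Delta\sigma$ is estimated in $H^{s_2}$ by the Sobolev algebra property and Proposition \ref{prop:KatoPonce}, while the dispersive term $\tfrac i2\Delta\alpha$ contributes nothing to $\RE\langle\Lambda^{s_2}\alpha,\Lambda^{s_2}(\cdot)\rangle$ after one integration by parts. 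Exactly as in the proof of Lemma \ref{lem:flot1} this yields $\partial_t\|\delta u\|_{s_2}^2\le C(\|S\|_{H^{s_2+3}}+\|A\|_{H^{s_2+1}})\|\delta u\|_{s_2}^2$; since $u\in B_s(M)\cap B_{s_2+1}(M_2)$, Lemma \ref{lem:flot1}(ii) (with $s_0=s$, $s_1=s_2+1$) bounds $\|S(h)\|_{H^{s_2+3}}+\|A(h)\|_{H^{s_2+1}}$ by a constant $C(M,M_2)$ for $h\le h_8\le h_5(M)$, and Gr\"onwall gives $\|\partial_2\varphi^1_h(u)\cdot u_0\|_{s_2}\le e^{C_8h}\|u_0\|_{s_2}$. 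The quadratic-form bound $|\langle u_0,D\mathcal{N}_1(\varphi^1_h(u))\cdot u_0\rangle_{s_2}|\le C_8\|u_0\|_{s_2}^2$ is nothing but the right-hand side of that energy identity with the coefficients frozen at $\varphi^1_h(u)$: inserting the explicit form of $D\mathcal{N}_1$ and performing the same integrations by parts (the $\tfrac i2\Delta$ part dropping from the real part) one bounds it termwise by $C(M,M_2)\|u_0\|_{s_2}^2$.

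The delicate point is part (i) for $\varphi^1$, i.e. the rigorous $C^1$ dependence with the announced loss of three derivatives; this is the main obstacle. The plan is to: (1) record that $\varphi^1$ is well-posed in each $\Sigma_\sigma$, $\sigma\ge s$, and enjoys a Lipschitz-type stability \`a la Lemma \ref{lem:1} (one argument controlled in $\Sigma_{\sigma+1}$), both by the energy method of Lemma \ref{lem:flot1}; (2) show $u\mapsto\varphi^1_h(u)$ is differentiable into $\Sigma_{s_1}$ with derivative $\delta u(h)$ above, by writing $e_\tau=\tau^{-1}(\varphi^1_h(u+\tau u_0)-\varphi^1_h(u))-\delta u(h)$, checking that $e_\tau$ solves a linearised-type system with zero initial datum and source $\tau^{-1}Q$, where $Q$ is the quadratic Taylor remainder of $\mathcal{N}_1$, with $\|Q\|_{s_1}\lesssim\|\varphi^1_h(u+\tau u_0)-\varphi^1_h(u)\|_{s_1+1}^2\lesssim\tau^2$ once the two solutions are controlled in $\Sigma_{s_1+2}$ (guaranteed since $u\in\Sigma_{s_1+3}$), and then applying the energy estimate — using precisely the frozen quadratic-form bound of part (ii) — to get $\|e_\tau(h)\|_{s_1}\lesssim\tau\to0$; (3) deduce continuity of $(h,u)\mapsto\partial_2\varphi^1_h(u)$ into $\mathcal{L}(\Sigma_{s_1+3},\Sigma_{s_1})$ and of $\partial_h\varphi^1_h(u)=\mathcal{N}_1(\varphi^1_h(u))$ into $\Sigma_{s_1}$ from the same stability estimates and the fact that $\mathcal{N}_1$ is locally Lipschitz from $\Sigma_{s_1+2}$ to $\Sigma_{s_1}$. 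The precise derivative count — and the reason the three flows require $\Sigma_{s_1+3}$, $\Sigma_{s_1+2}$, $\Sigma_{s_1}$ respectively — is most transparent on the closed form of $\varphi^1$: since $v=\nabla S$ is constant along the straight characteristics $x=y+t\nabla S_0(y)$, one has $S(h,\cdot)=S_0\circ\Phi_h^{-1}-\tfrac h2\,|\nabla S_0|^2\circ\Phi_h^{-1}$ with $\Phi_h=\mathrm{id}+h\nabla S_0$ (a diffeomorphism for $h\le h_8$, since $\|\nabla^2S_0\|_{L^\infty}\lesssim M$), while $w=Ae^{iS}$ solves the free Schr\"odinger equation, so $A(h,\cdot)=(e^{ih\Delta/2}(A_0e^{iS_0}))\,e^{-iS(h,\cdot)}$; the loss of three derivatives for $\varphi^1$ is then the classical loss of the composition operator and of the inversion map on Sobolev-class maps, plus one derivative needed to make the difference-quotient remainder in step (2) quadratic. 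Everything else reduces to energy inequalities already in hand, so the bookkeeping of these losses and the passage to the limit in step (2) are the only genuinely new ingredients.
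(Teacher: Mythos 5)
Your proposal is correct and follows essentially the same route as the paper's proof: the explicit treatment of $\varphi^2$ and $\varphi^3$, the energy estimates of Lemmas \ref{eq:burger} and \ref{eq:transport} applied to the linearized system defining $\pa_2\varphi^1_h(u)\cdot u_0$, the quadratic bound on the Taylor remainder (your difference-quotient $e_\tau$ is just the paper's $\Gamma^1_h=\varphi^1_h(u+u_0)-\varphi^1_h(u)-\Theta^1_h\cdot u_0$ with $u_0$ rescaled), and the continuity of $(h,u)\mapsto\pa_2\varphi^1_h(u)$ to upgrade to $C^1$. The only blemish is a sign in your non-essential characteristics aside (one has $S(h,\cdot)=\bigl(S_0+\tfrac h2|\nabla S_0|^2\bigr)\circ\Phi_h^{-1}$, with a plus), which does not affect the argument.
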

	The following lemma ensures that the object studied in the proof of Lemma \ref{lem:3} are well-defined.	
	\begin{lemma}\label{lem:residu}
		Let $s>d/2+1$ and $M>0$.  There is $h_{9} = h_{9}(M)>0$ such that the following three points are true. Let $u\in\Sigma_{s+7}$ such that $\|u\|_{s+2}\leq M$.
		\begin{enumerate}[(i)]
			\item  We have for all $h\in[0,h_9]$,
			\[
				\varphi^{1234}_h(u),\; \varphi^{234}_h(u),\;\varphi^{34}_h(u) \mbox{ and } \varphi^4_h(u)
			\]
			are well-defined, belong to $L^\infty([0,h_{9}],\Sigma_{s+7})$ and satisfy
			\[\begin{split}
				\max\(\|\varphi^4_h(u)\|_{s+2},\|\varphi^{34}_h(u)\|_{s+2},\|\varphi^{234}_h(u)\|_{s+2}\) \leq 4M.
			\end{split}\]
			\item The application $h\in[0,h_9]\mapsto \mathscr{R}(h,u)\in \Sigma_{s}$ is differentiable,
			\[\begin{split}
				&\pa_h \mathscr{R}(h,u) = \sum_{k = 1}^4\N_k(\varphi^{1234}_h(u))-\N_k(\phi_h(u)) + \mathscr{S}(h,u),\\
				&\mathscr{R}(0,u) = 0.
			\end{split}\]
			where
			\[\begin{split}
		 		\mathscr{S}(h,u)  &= \(\chi_{12} + \chi_{13} + \chi_{14}\)(h,\varphi^{234}_h(u)) \\
				&+ \pa_2\varphi^1(h, \varphi^{234}_h(u))\cdot (\chi_{23} + \chi_{24})(h,\varphi^{34}_h(u))\\
				& + \pa_2\varphi^1(h, \varphi^{234}_h(u))\cdot\pa_2\varphi^2(h, \varphi^{34}_h(u))\cdot\chi_{34}(h,\varphi^4_h(u))
			\end{split}\]
			and $\chi_{ij}(h,v) = \pa_2\varphi^i_h(v)\cdot \N_j(v) - \N_j(\varphi^i_h(v))$ $($see \cite[Section 3]{auzinger2015defect}$)$.
			\item Let $v\in \Sigma_{s+7}$. We have,
			\[\begin{split}
				&\pa_h\chi_{ij}(h,v) =  D\N_i(\varphi^i_h(v))\cdot \chi_{ij}(h,v) + [\N_i,\N_j](\varphi^i_h(v))\\
				& \chi_{ij}(0,v) = 0.
			\end{split}\]
		\end{enumerate}
	\end{lemma}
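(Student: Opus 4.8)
The plan is to prove the three points in the order (i), (iii), (ii): (i) is just a composition of the one‑step flow bounds, while (iii) and (ii) are exact differential identities coming out of the nested product rule combined with the defining relation of the defects $\chi_{ij}$. For (i), I would fix $h_{9}=h_{9}(M)$ smaller than $h_1(M)$, $h_7(M)$, $h_6(2M)$, $h_5(4M)$ and also small enough that $\exp(C_7(M)h_{9})\le 2$ and $h_{9}\|\mathcal{V}\|_{H^{s+4}}\le M$. Since $u\in\Sigma_{s+7}$ with $\|u\|_s\le\|u\|_{s+2}\le M$: Lemma~\ref{lem:flot4} gives $\varphi^4_h(u)\in\Sigma_{s+7}$ on $[0,h_{9}]$ with $\|\varphi^4_h(u)\|_{s+2}\le 2M$; Lemma~\ref{lem:flot23} then gives $\varphi^{34}_h(u)=\varphi^3_h(\varphi^4_h(u))\in\Sigma_{s+7}$ with $\|\varphi^{34}_h(u)\|_{s+2}\le 2M+h\|\mathcal{V}\|_{H^{s+4}}\le 3M$, and, $\varphi^2$ being norm‑nonincreasing on each $\Sigma_{s_1}$ by Lemma~\ref{lem:flot23}, $\varphi^{234}_h(u)=\varphi^2_h(\varphi^{34}_h(u))\in\Sigma_{s+7}$ with the same bound; finally $\|\varphi^{234}_h(u)\|_s\le 4M$ lets Lemma~\ref{lem:flot1} (used with $4M$) produce $\varphi^{1234}_h(u)=\varphi^1_h(\varphi^{234}_h(u))\in\Sigma_{s+7}$ on $[0,h_{9}]$, and $\phi_h(u)\in\Sigma_{s+7}$ on $[0,h_{9}]$ follows from Lemma~\ref{lem:0}. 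This delivers all the well‑posedness claims together with $\max(\|\varphi^4_h(u)\|_{s+2},\|\varphi^{34}_h(u)\|_{s+2},\|\varphi^{234}_h(u)\|_{s+2})\le 3M\le 4M$.

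For (iii), I would use that each $\varphi^i_h$ is, at the continuous level, the exact flow of $\N_i$, so that $\pa_h\varphi^i_h(v)=\N_i(\varphi^i_h(v))$ and $\varphi^i_0=\id$; and that for the only indices $i\in\{1,2,3\}$ occurring in $\mathscr{S}$, Lemma~\ref{lem:flotdiff} makes $\varphi^i$ and $\pa_2\varphi^i$ of class $C^1$ in $(h,v)$ on the relevant Sobolev scale and, through the variational equation, yields $\pa_h(\pa_2\varphi^i_h(v))=D\N_i(\varphi^i_h(v))\cdot\pa_2\varphi^i_h(v)$. With $v\in\Sigma_{s+7}$ the derivative losses of $\N_j$ and $D\N_i$ are harmless, so $\chi_{ij}(h,v)=\pa_2\varphi^i_h(v)\cdot\N_j(v)-\N_j(\varphi^i_h(v))$ is $C^1$ in $h$ with values in $\Sigma_s$ and
\[
\pa_h\chi_{ij}(h,v)=D\N_i(\varphi^i_h(v))\cdot\pa_2\varphi^i_h(v)\cdot\N_j(v)-D\N_j(\varphi^i_h(v))\cdot\N_i(\varphi^i_h(v)).
\]
Substituting $\pa_2\varphi^i_h(v)\cdot\N_j(v)=\chi_{ij}(h,v)+\N_j(\varphi^i_h(v))$ and recognizing $D\N_i(w)\cdot\N_j(w)-D\N_j(w)\cdot\N_i(w)=[\N_i,\N_j](w)$ at $w=\varphi^i_h(v)$ gives the stated identity, and $\chi_{ij}(0,v)=\N_j(v)-\N_j(v)=0$.

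For (ii), I would differentiate the nested compositions $\varphi^{34}_h(u)=\varphi^3_h(\varphi^4_h(u))$, $\varphi^{234}_h(u)=\varphi^2_h(\varphi^{34}_h(u))$, $\varphi^{1234}_h(u)=\varphi^1_h(\varphi^{234}_h(u))$ with respect to $h$, using the product rule and $\pa_h\varphi^i_h(w)=\N_i(\varphi^i_h(w))$ at each stage. Unwinding from the innermost flow outward produces, for $\pa_h\varphi^{1234}_h(u)$, a sum in which each factor $\pa_2\varphi^i_h(v)\cdot\N_j(v)$ (with $v$ an intermediate state $\varphi^{jk\ell}_h(u)$) can be rewritten, by the very definition of $\chi_{ij}$, as $\N_j(\varphi^i_h(v))+\chi_{ij}(h,v)$; pushing the remaining $\pa_2\varphi^i_h$'s through, the terms $\N_j(\varphi^i_h(v))$ recombine telescopingly into $\sum_{k=1}^4\N_k(\varphi^{1234}_h(u))$ while the $\chi$‑terms assemble — with the prefactors $\pa_2\varphi^1$ and $\pa_2\varphi^1\cdot\pa_2\varphi^2$ landing exactly where $\mathscr{S}$ places them — into $\mathscr{S}(h,u)$. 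Subtracting $\pa_h\phi_h(u)=\sum_{k=1}^4\N_k(\phi_h(u))$ and noting $\mathscr{R}(0,u)=u-u=0$ then gives the formula; the $C^1$‑regularity of all the compositions into $\Sigma_s$ that legitimates the computation is again furnished by (i) and Lemma~\ref{lem:flotdiff}.

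The genuine difficulty is not this algebra but the accompanying bookkeeping of regularity: one must check that every composition, every Fr\'echet derivative, every commutator, and every mixed $h$/$v$‑derivative appearing above is meaningful as a continuous map into $\Sigma_s$. This is precisely why $u$ is taken in $\Sigma_{s+7}$: each of $\N_k$ and $D\N_k$ costs up to two derivatives, $[\N_i,\N_j]$ up to four, and $\mathscr{S}$ nests up to three flow‑derivatives, so a seven‑derivative buffer is needed, with a little slack left for the further differentiation of $\mathscr{S}$ carried out in the proof of Lemma~\ref{lem:3}. The only non‑elementary inputs are the $C^1$‑smoothness of $\pa_2\varphi^i$ and the commutation $\pa_h\pa_2\varphi^i=\pa_2\pa_h\varphi^i$, both supplied by Lemma~\ref{lem:flotdiff}; the behaviour of $\varphi^4$ that is used (well‑posedness and bounds in $\Sigma_{s+7}$, and $\pa_h\varphi^4_h=\N_4\circ\varphi^4_h$) comes from its explicit closed form and Lemma~\ref{lem:flot4}.
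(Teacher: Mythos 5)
Your proof is correct and follows essentially the same route as the paper: the same chaining of Lemmas \ref{lem:flot4}, \ref{lem:flot23}, \ref{lem:flot1} for point (i), the same chain-rule computation of $\pa_h\varphi^{1234}_h(u)$ rewritten via the defects $\chi_{ij}$ for point (ii), and the same use of the variational equation $\pa_h(\pa_2\varphi^i_h(v))=D\N_i(\varphi^i_h(v))\cdot\pa_2\varphi^i_h(v)$ for point (iii). The only cosmetic difference is that your explicit definition of $h_9$ omits $h_8(4M)$ from the minimum (the paper takes $h_9=\min(h_5(4M),h_6(2M),h_7(M),h_8(4M))$), which you need anyway to invoke Lemma \ref{lem:flotdiff} for the $C^1$-regularity claims you correctly rely on.
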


		The following lemma gives bounds on the commutators. 
	\begin{lemma}\label{lem:commutateurs}
		Let $s>d/2+1$. There is $C>0$ such that for any $u\in\Sigma_{s+2}$ and any $1\leq i<j\leq 4$, we have
		\[
			\|[\N_i,\N_j](u)\|_{s}\leq C\|u\|^2_{s+2}(1 + \|u\|_{s+2}).
		\]
		$C$ does not depend on $\eps\in(0,\eps_{max}]$.
	\end{lemma}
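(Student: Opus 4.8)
The plan is to compute each commutator $[\N_i,\N_j]$ explicitly and then estimate it term by term in $\Sigma_s$ using the fact that $H^s$ is an algebra for $s>d/2$ together with the Kato-Ponce type estimates already available (Proposition \ref{prop:KatoPonce}). Recall that the four operators are
\[
\N_1(S,A) = \begin{pmatrix} -\tfrac12|\nabla S|^2 \\ -\nabla S\cdot\nabla A - \tfrac{A}{2}\Delta S + \tfrac{i}{2}\Delta A\end{pmatrix},\quad
\N_2(S,A) = \begin{pmatrix} 0 \\ \tfrac{i(\eps-1)}{2}\Delta A\end{pmatrix},
\]
\[
\N_3(S,A) = \begin{pmatrix} -\mathcal{V} \\ 0 \end{pmatrix},\qquad
\N_4(S,A) = \begin{pmatrix} \eps^2\Delta S \\ -i\eps A\Delta S\end{pmatrix}.
\]
First I would observe the general structural point that guides all the bookkeeping: the $S$-component of $\N_2$, $\N_3$, $\N_4$ costs at most two derivatives on $S$ (in fact $\N_3$ costs none, and $\N_4$ is the only one with a derivative-losing, $\eps$-weighted $S$-component), while the $A$-components are at most second order in $(S,A)$; and $D\N_i$ applied to a vector $(\sigma,\alpha)$ produces expressions that are again first order in $\sigma$ (for the $S$-line, only through $\nabla S\cdot\nabla\sigma$ type terms) and second order overall. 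Since $u\in\Sigma_{s+2}$ means $S\in H^{s+4}$ and $A\in H^{s+2}$, we have two extra derivatives of room in the $S$-slot and in the $A$-slot relative to the target space $\Sigma_s = H^{s+2}\times H^s$, which is exactly what is needed to absorb the two derivatives lost in the worst term.

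Next I would go through the $\binom{4}{2}=6$ commutators. The easy ones are those involving $\N_3$: since $\N_3$ is a constant (in $u$) vector $(-\mathcal V,0)$, we have $D\N_3 = 0$, so $[\N_i,\N_3](u) = -D\N_i(u)\cdot\N_3(u) = D\N_i(u)\cdot(\mathcal V,0)$, which for $i=1$ produces terms like $\nabla S\cdot\nabla\mathcal V$, $\mathcal V\Delta S$... all manifestly bounded by $C\|u\|_{s+2}^2(1+\|u\|_{s+2})$ using the algebra property (the $\|\mathcal V\|$ factors are absorbed into $C$, which is allowed to depend on $\mathcal V$), and for $i=2,4$ it is even simpler. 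The commutator $[\N_2,\N_4]$ and $[\N_1,\N_2]$ are the most dangerous: $\N_2$ and (the $S$-component of) $\N_4$ both contain a Laplacian, so naively $[\N_2,\N_4]$ could lose four derivatives, and $[\N_1,\N_2]$ likewise involves $\Delta\circ\Delta$. The key computation here is that the second-order terms cancel: for instance in $[\N_1,\N_2]$, the piece $D\N_1\cdot\N_2$ contains $-\nabla S\cdot\nabla(\tfrac{i(\eps-1)}2\Delta A)$ and $D\N_2\cdot\N_1$ contains $\tfrac{i(\eps-1)}2\Delta(-\nabla S\cdot\nabla A - \tfrac{A}2\Delta S + \tfrac i2\Delta A)$; the $\tfrac i2\Delta(\tfrac i2\Delta A)$ terms cancel (both flows' $A$-equations commute with the free-Schrödinger part, by design — this is the whole point of splitting $\N_1,\N_2$ the way the authors did), and what survives is genuinely of order $\le 2$ in $(S,A)$ with at worst $\|S\|_{H^{s+4}}\|A\|_{H^{s+2}}$ products. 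Similarly $[\N_2,\N_4]$: the pure $\Delta S$ vs $\Delta A$ cross terms are either zero or reduce after cancellation to products controlled in $\Sigma_{s+2}$; the $\eps$-weights only help. I would carry out these two computations in detail and note that all other remaining commutators ($[\N_1,\N_3]$, $[\N_1,\N_4]$, $[\N_2,\N_3]$, $[\N_3,\N_4]$) are strictly easier.

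The main obstacle is thus the careful algebra of $[\N_1,\N_2]$ and $[\N_2,\N_4]$: one must verify that the top-order (third- and fourth-order) contributions really do cancel, leaving only terms that, by the $H^s$-algebra property and Leibniz, are bounded by a product of at most three Sobolev norms at the levels $H^{s+4}$ (for $S$) and $H^{s+2}$ (for $A$) — hence by $C\|u\|_{s+2}^2(1+\|u\|_{s+2})$, with the cubic term coming only from the $\N_1$-$\N_1$-quadratic-times-linear combinations such as $(\nabla S\cdot\nabla S)\Delta A$-type expressions. Since $\eps\le\eps_{max}$, every $\eps$ or $\eps^2$ factor is harmless and the constant $C$ is uniform in $\eps$; the $\eps$-dependence of the flows never enters the bound. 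I would conclude by remarking that no smallness of $h$ is needed here — this is purely a pointwise-in-$u$ estimate on the vector fields — which is why the statement has no $h_i(M)$.
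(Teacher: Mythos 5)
Your overall strategy is exactly the one the paper follows: write out $D\N_i(u)\cdot u_0$ for each $i$, compute the six commutators explicitly, and bound the surviving terms with the algebra property of $H^\sigma$, $\sigma>d/2$, the two extra derivatives available on $S\in H^{s+4}$ and $A\in H^{s+2}$ absorbing the losses. The decisive cancellations you point to are correct and are the same ones appearing in the paper's computation: in $[\N_1,\N_2]$ the $\Delta^2A$ contributions of $\frac{i}{2}\Delta\circ\frac{i(\eps-1)}{2}\Delta$ cancel, and after expanding $\Delta\(\nabla S\cdot\nabla A+\frac{A}{2}\Delta S\)$ so do the $\nabla S\cdot\nabla\Delta A$ and $\Delta A\,\Delta S$ terms, leaving at most two derivatives on $A$; in $[\N_2,\N_4]$ one is left with $\frac{\eps(\eps-1)}{2}\(2\nabla A\cdot\nabla\Delta S+A\Delta^2S\)$, which is harmless. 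One imprecision: $[\N_1,\N_4]$ is not ``strictly easier'' --- it is the only commutator producing the cubic term $-i\eps A\sum_k|\nabla\pa_kS|^2$ responsible for the factor $(1+\|u\|_{s+2})$ in the statement --- but it is handled by the same method. Your remarks on $\eps$-uniformity and on the absence of any smallness condition on $h$ are correct.

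The step that fails as written is your treatment of the commutators involving $\N_3$. You correctly observe that $D\N_3=0$, so $[\N_i,\N_3](u)=D\N_i(u)\cdot\N_3(u)=D\N_i(u)\cdot(-\mathcal V,0)^T$ (your displayed sign is off, but that is harmless for norms). You then claim the resulting terms --- $\nabla S\cdot\nabla\mathcal V$, $\nabla\mathcal V\cdot\nabla A$, $A\Delta\mathcal V$ for $[\N_1,\N_3]$, and $\eps^2\Delta\mathcal V$, $i\eps A\Delta\mathcal V$ for $[\N_3,\N_4]$ --- are ``manifestly bounded by $C\|u\|_{s+2}^2(1+\|u\|_{s+2})$''. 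They are not: these expressions are linear (and, in the case of $\eps^2\Delta\mathcal V$, constant) in $u$, so no bound vanishing quadratically at $u=0$ can hold, no matter how $C$ depends on $\mathcal V$. The paper's own proof sidesteps this by asserting $[\N_1,\N_3]=[\N_2,\N_3]=[\N_3,\N_4]=0$; of these only $[\N_2,\N_3]=0$ is literally correct (the $A$-component of $\N_3(u)$ vanishes and $\N_2$ acts only on the $A$-slot), and your own formula shows the other two do not vanish. So as it stands your argument does not establish the stated inequality for $(i,j)=(1,3)$ and $(3,4)$; the estimate the computation actually yields there is $\|[\N_i,\N_j](u)\|_s\le C(1+\|u\|_{s+2})^3$, which is all that is used in the proof of Lemma \ref{lem:3} (only a uniform bound on the ball $\|u\|_{s+2}\le M$ is needed). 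You should either prove that weaker, correct estimate, or carry out the $\N_3$ computations explicitly instead of asserting ``manifest'' boundedness of terms whose homogeneity in $u$ is too low for the claimed right-hand side.
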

	
	\subsubsection{Proof of Lemma \ref{lem:3}}
	%
	%
	%\begin{proof}
		%
		Let $s>d/2+1$ and  $M>0$. Let us define $h_4 = h_4(M) = h_9(M)$. Assume for the moment that $u\in \Sigma_{s+7}$ and $\|u\|_{s+2}\leq M$.
		By Lemmas \ref{lem:flotdiff}, \ref{lem:residu}, \ref{lem:commutateurs} and Gronwall's Lemma, there is $C = C(M)>0$ such that for any $h\in [0,h_{4}]$
		\[\begin{split}
			&\|\chi_{12}(h,\varphi^{234}_h(u))\|_{s}+\|\chi_{13}(h,\varphi^{234}_h(u))\|_{s}+\|\chi_{14}(h,\varphi^{234}_h(u))\|_{s}\leq Ch,
			\\
			&\|\chi_{23}(h,\varphi^{34}_h(u))\|_{s}+\|\chi_{24}(h,\varphi^{34}_h(u))\|_{s}\leq Ch
			\\
			&\|\chi_{34}(h,\varphi^{4}_h(u))\|_{s}\leq Ch.
		\end{split}\]
		Using again Lemmas \ref{lem:flotdiff} and \ref{lem:residu}, we obtain that
		\[
			\|\mathscr{S}(h,u)\|_s\leq Ch.
		\]
		Let us define 
		\[\begin{split}
			&R_{1,S} = -\mathcal{V} + \eps^2\Delta \Pi_1 \phi_h(u),\\
			&R_{2,S} = -\mathcal{V} + \eps^2\Delta \Pi_1 \varphi^{1234}_h(u) + \Pi_1\mathscr{S}(h,u),\\
			&R_{1,A} = \frac{i\eps \Delta \Pi_2 \phi_h(u)}{2}-i\eps \frac{\Delta \Pi_1 \phi_h(u)}{2}\Pi_2\phi_h(u),\\
			&R_{2,A} = \frac{i\eps \Delta \Pi_2 \varphi^{1234}_h(u)}{2}-i\eps \frac{\Delta \Pi_1 \varphi^{1234}_h(u)}{2}\Pi_2\varphi^{1234}_h(u)+ \Pi_2\mathscr{S}(h,u)
		\end{split}\]
		where $\Pi_1$ and $\Pi_2$ are defined in \eqref{eq:proj}.
		Then, Lemma \ref{lem:stabaux} ensures that
		\[
			\pa_t\|\varphi^{1234}_h(u)-\phi_h(u)\|^2_{s}\leq C\|\varphi^{1234}_h(u)-\phi_h(u)\|^2_{s} + C\|\varphi^{1234}_h(u)-\phi_h(u)\|_{s}\|\mathscr{S}(h,u)\|_s.
		\]
		Gronwall's lemma ensures that there is $K_4 = K_4(M)$ such that
		\[
			\|\varphi^{1234}_h(u)-\phi_h(u)\|^2_{s}\leq K_4h^2.
		\]
		Let us insist on the fact that $K_4$ and $h_4$ only depend on $M$. Hence, using the fact that for all $h\in [0,h_4]$, the applications
		\[
			u\in \Sigma_{s+2}\mapsto\phi_h(u)\in \Sigma_{s}	
		\]
		and
		\[
			u\in \Sigma_{s+2}\mapsto\varphi^{1234}_h(u)\in \Sigma_{s}	
		\]
		are continuous (see Lemma \ref{lem:1} and the proof of Lemma \ref{lem:flotdiff}), we get that
		\[
			\|\varphi^{1234}_h(u)-\phi_h(u)\|^2_{s}\leq K_4h^2.
		\]
		holds true for any $u\in \Sigma_{s+2}$ such that $\|u\|_{s+2}\leq M/2$ and the result follows.
	%\end{proof}
	%
	\subsubsection{Proof of Lemma \ref{lem:residu}}
	%
	%
	%	
%	\begin{lemma}\label{lem:residu}
%		Let $s>d/2+1$ and $M>0$.  There is $h_{9} = h_{9}(M)>0$ such that the following two points are true. Let $u\in\Sigma_{s+7}$ such that $\|u\|_{s+2}\leq M$.t
%		%
%		\begin{enumerate}[(i)]
%			%
%			\item  We have for all $h\in[0,h_9]$,
%			%
%			\[
%				\varphi^{1234}_h(u),\; \varphi^{234}_h(u),\;\varphi^{34}_h(u) \mbox{ and } \varphi^4_h(u)
%			\]
%			%
%			are well-defined, belong to $L^\infty([0,h_{9}],\Sigma_{s+7})$ and satisfy
%			%
%			\[\begin{split}
%				\max\(\|\varphi^4_h(u)\|_{s+2},\|\varphi^{34}_h(u)\|_{s+2},\|\varphi^{234}_h(u)\|_{s+2},\|\varphi^{1234}_h(u)\|_{s+2}\) \leq 8M.
%			\end{split}\]
%			%
%			\item The application $h\in[0,h_9]\mapsto \mathscr{R}(h,u)\in \Sigma_{s}$ is differentiable,
%			%
%			\[\begin{split}
%				&\pa_h \mathscr{R}(h,u) = \sum_{k = 1}^4\N_k(\varphi^{1234}_h(u))-\N_k(\phi_h(u)) + \mathscr{S}(h,u),\\
%				&\mathscr{R}(0,u) = 0.
%			\end{split}\]
%			%
%			where
%			%
%			\[\begin{split}
%				%
%		 		\mathscr{S}(h,u)  &= \(\chi_{12} + \chi_{13} + \chi_{14}\)(h,\varphi^{234}_h(u)) \\
%				&+ \pa_2\varphi^1(h, \varphi^{234}_h(u))\cdot (\chi_{23} + \chi_{24})(h,\varphi^{34}_h(u))\\
%				& + \pa_2\varphi^1(h, \varphi^{234}_h(u))\cdot\pa_2\varphi^2(h, \varphi^{34}_h(u))\cdot\chi_{34}(h,\varphi^4_h(u))
%				%
%			\end{split}\]
%			%
%			and $\chi_{ij}(h,u) = \pa_2\varphi^i_h(u)\cdot \N_j(u) - \N_j(\varphi^i_h(u))$ $($see \cite[Section 3]{auzinger2015defect}$)$.
%		\end{enumerate}
%		%
%	\end{lemma}
	%
	%
	%\begin{proof}
		Let $u\in\Sigma_{s+7}$ such that $\|u\|_{s+2}\leq M$. 
		Let us define
		\be\label{eq:boundh2}
			0< h_9 = h_9(M) :=\min\(h_5(4M),h_6(2M),h_7(M),h_8(4M)\) ,
		\ee
		where $h_5,h_6,h_7$ and $h_8$ are defined by Lemmas \ref{lem:flot1}, \ref{lem:flot23}, \ref{lem:flot4} and \ref{lem:flotdiff}.
		Using these lemmas, we get that for all $h\in[0,h_9]$,
		\[
			\varphi^{1234}_h(u),\; \varphi^{234}_h(u),\;\varphi^{34}_h(u) \mbox{ and } \varphi^4_h(u)
		\]
		are well-defined, belong to $L^\infty([0,h_{9}],\Sigma_{s+7})$ and satisfy
		\[\begin{split}
			\max\(\|\varphi^4_h(u)\|_{s+2},\|\varphi^{34}_h(u)\|_{s+2},\|\varphi^{234}_h(u)\|_{s+2}\) \leq 4M.
		\end{split}\]
		Define
		for $i = 1,2,3,4$, $h\geq 0$ and $u_0\in \Sigma_{s+2}$, the applications
		 \[
		 	\vartheta^i(h,u_0) = (h,\varphi^i_h(u_0))^T\mbox{ and } \Xi(h,u_0) = u_0.
		 \]
		By Lemma \ref{lem:flotdiff}, we obtain that
		 \[
		 	h\in [0,h_{9}]\mapsto  \varphi^{1234}_h(u)\in B_{s}(8M)
		 \]
		 is a $C^1$-application since $ \varphi^{1234}_h(u) = \Xi\circ\vartheta^1\circ \vartheta^2\circ \vartheta^3\circ \vartheta^4(h,u)$.
		We have that
		\[\begin{split}
			 &\pa_h \varphi^{1234}_h(u) %= \N_1\varphi^{1234}_h(u)+ \pa_2\varphi^{1}_h(\varphi^{234}_h(u))\cdot \pa_h\varphi^{234}_h(u)\\
			% &\quad= \N_1\varphi^{1234}_h(u) + \pa_2\varphi^{1}_h(\varphi^{234}_h(u))\cdot\(\N_2\varphi^{234}_h(u) + \pa_2\varphi^2_h(\varphi^{34}_h(u))\cdot \pa_h\varphi^{34}_h(u)\)\\
			 \quad= \N_1\varphi^{1234}_h(u) + \pa_2\varphi^{1}_h(\varphi^{234}_h(u))\cdot\N_2\varphi^{234}_h(u) \\
			 &\qquad+ \pa_2\varphi^{1}_h(\varphi^{234}_h(u))\cdot\pa_2\varphi^2_h(\varphi^{34}_h(u))\cdot \N_3\varphi^{34}_h(u) \\
			 &\qquad+ \pa_2\varphi^{1}_h(\varphi^{234}_h(u))\cdot\pa_2\varphi^2_h(\varphi^{34}_h(u))\cdot\pa_2\varphi^{3}_h(\varphi^4_h(u))\cdot\N_4\varphi^4_h(u),
		\end{split}\]		
		so that
		\[\begin{split}
			&\pa_h \varphi^{1234}_h(u) \\
%			 &\quad =\N_1\varphi^{1234}_h(u)+\N_2\varphi^{1234}_h(u) + \chi_{12}(h,\varphi^{234}_h(u))\\
%			 &\qquad+ \pa_2\varphi^{1}_h(\varphi^{234}_h(u))\cdot\(\N_3\varphi^{234}_h(u) + \chi_{23}(h,\varphi^{34}_h(u))\) \\
%			 &\qquad+ \pa_2\varphi^{1}_h(\varphi^{234}_h(u))\cdot\pa_2\varphi^2_h(\varphi^{34}_h(u))\cdot\(\N_4\varphi^{34}_h(u) + \chi_{34}(h,\varphi^{4}_h(u))\)\\
%			 &\quad =\N_1\varphi^{1234}_h(u)+\N_2\varphi^{1234}_h(u) + \N_3\varphi^{1234}_h(u)\\
%			 &\qquad+ \chi_{12}(h,\varphi^{234}_h(u))+ \chi_{13}(h,\varphi^{234}_h(u))+ \pa_2\varphi^{1}_h(\varphi^{234}_h(u))\cdot \chi_{23}(h,\varphi^{34}_h(u)) \\
%			 &\qquad + \pa_2\varphi^{1}_h(\varphi^{234}_h(u))\cdot\(\N_4\varphi^{234}_h(u) + \chi_{24}(h,\varphi^{34}_h(u))\)\\
%			 &\qquad+ \pa_2\varphi^{1}_h(\varphi^{234}_h(u))\cdot\pa_2\varphi^2_h(\varphi^{34}_h(u))\cdot \chi_{34}(h,\varphi^{4}_h(u))\\
			 &\quad =\N_1\varphi^{1234}_h(u)+\N_2\varphi^{1234}_h(u) + \N_3\varphi^{1234}_h(u)+\N_4\varphi^{1234}_h(u)\\
			 &\qquad+ \chi_{12}(h,\varphi^{234}_h(u))+ \chi_{13}(h,\varphi^{234}_h(u))+ \chi_{14}(h,\varphi^{234}_h(u))\\
			 &\qquad+\pa_2\varphi^{1}_h(\varphi^{234}_h(u))\cdot \(\chi_{23}(h,\varphi^{34}_h(u)) + \chi_{24}(h,\varphi^{34}_h(u))\)\\
			 &\qquad+ \pa_2\varphi^{1}_h(\varphi^{234}_h(u))\cdot\pa_2\varphi^2_h(\varphi^{34}_h(u))\cdot \chi_{34}(h,\varphi^{4}_h(u)).
		\end{split}\]
		Let us show the last point.  We have for $u_0\in\Sigma_{s+7}$ that
		\[\begin{split}
			%&\pa_{h}\varphi^i_h(v) = \N_i \varphi^i_h(v)\\
			 &\pa_{h}\(\pa_2\varphi^i_h(v)\cdot u_0\) = D\N_i(\varphi^i_h(v))\cdot \(\pa_2\varphi^i_h(v)\cdot u_0\),
		\end{split}\]
		so that  %$\chi_{ij}(h,u) = \pa_2\varphi^i_h(u)\cdot \N_j(u) - \N_j(\varphi^i_h(u))$
		\[\begin{split}
			\pa_h\chi_{ij}(h,v) &=  D\N_i(\varphi^i_h(v))\cdot \pa_2\varphi^i_h(v)\cdot \N_j(v) - D\N_j(\varphi^i_h(v))\cdot\pa_h\varphi^i_h(v)\\
			%& = D\N_i(\varphi^i_h(u))\cdot \N_j(u) -  D\N_j(\varphi^i_h(u))\cdot\N_i\varphi^i_h(u)\\
			& = D\N_i(\varphi^i_h(v))\cdot\chi_{ij}(h,v) + [\N_i,\N_j](\varphi^i_h(v)).
		\end{split}\]
	%
	%
	
	%=============================
	\subsubsection{Proof of Lemma \ref{lem:commutateurs}}
%	\begin{lemma}\label{lem:commutateurs}
%		Let $s>d/2+1$. There is $C>0$ such that for any $u\in\Sigma_{s+2}$ and any $1\leq i<j\leq 4$, we have
%		\[
%			\|[\N_i,\N_j](u)\|_{s}\leq C\|u\|^2_{s+2}(1 + \|u\|_{s+2}).
%		\]
%		$C$ does not depend on $\eps$.
%	\end{lemma}
	%
	%\begin{proof}
		Let us consider 
		\[
			u = \(\begin{array}{c}S\\A\end{array}\) \mbox{ and }u_0 = \(\begin{array}{c}S_0\\A_0\end{array}\).
		\]
		We have
		\[\begin{split}
			&D\N_1(u)\cdot u_0 = \(\begin{array}{c}
			-\nabla S\cdot \nabla S_0\\-\nabla S\cdot \nabla A_0 - A_0\frac{\Delta S}{2}-\nabla S_0\cdot \nabla A - A\frac{\Delta S_0}{2} + i\frac{\Delta A_0}{2}
			\end{array}\),\\
			&D\N_2(u)\cdot u_0 = \N_2u_0=\(\begin{array}{c}
			0\\i(\eps-1)\frac{\Delta A_0}{2}
			\end{array}\),\\
			&D\N_3(u)\cdot u_0 = 0,\\
			&D\N_4(u)\cdot u_0 = \(\begin{array}{c}
			\eps^2\Delta S_0\\
			-i\eps\(A_0\Delta S + A\Delta S_0\)
			\end{array}\),
		\end{split}\]
		so that, $[\N_1,\N_3](u) = 0$, $[\N_2,\N_3](u) = 0$, $[\N_3,\N_4](u) = 0$ and
		\[\begin{split}
			&[\N_1,\N_2](u) = D\N_1(u)\cdot\N_2(u) - D\N_2(u)\cdot\N_1(u)\\
			%
%			 &\quad=\(\begin{array}{c}
%			0\\-\frac{i(\eps-1)}{2}\nabla S\cdot \nabla \Delta A - \frac{i(\eps-1)}{2}\Delta A\frac{\Delta S}{2}-  (\eps-1)\frac{\Delta^2 A}{4} \\
%			%
%			 - i(\eps-1)\frac{\Delta }{2}\(-\nabla S\cdot \nabla A - A\frac{\Delta S}{2} + i\frac{\Delta A}{2}\)
%			\end{array}\),\\	
%			%
%			&\quad=-\frac{i(\eps-1)}{2}\(\begin{array}{c}
%			0\\\nabla S\cdot \nabla \Delta A +\Delta A\frac{\Delta S}{2} -\Delta\(\nabla S\cdot \nabla A + A\frac{\Delta S}{2}\)
%			\end{array}\),\\
			%
			&\quad=\frac{i(\eps-1)}{2}\(\begin{array}{c}
			0\\ \nabla \Delta S\cdot \nabla A + A\frac{\Delta^2 S}{2} + 2\sum_{k=1}^d\nabla \pa_k S\cdot\nabla \pa_k A  + \pa_kA\frac{\Delta \pa_kS}{2}
			\end{array}\).
		\end{split}\]
		We obtain
		\[\begin{split}
			&\|[\N_1,\N_2](u)\|_{s} %
			%\leq\left\|\nabla \Delta S\cdot \nabla A + A\frac{\Delta^2 S}{2} + 2\sum_{k=1}^d\nabla \pa_k S\cdot\nabla \pa_k A  + \pa_kA\frac{\Delta \pa_kS}{2}
			%\right\|_{H^s}\\
			%&\quad\leq C\(\|S\|_{H^{s+3}}\|A\|_{H^{s+1}} + \|S\|_{H^{s+4}}\|A\|_{H^{s}}+ \|S\|_{H^{s+2}}\|A\|_{H^{s+2}}+ \|S\|_{H^{s+3}}\|A\|_{H^{s+1}}\)\\
			%&
			\quad\leq C\|u\|_{s+2}^2.
		\end{split}\]
		We also have 
		\[\begin{split}
			&[\N_1,\N_4](u) = D\N_1(u)\cdot\N_4(u) - D\N_4(u)\cdot\N_1(u)\\
			%
%			&\quad= \(\begin{array}{c}
%			-\eps^2\nabla S\cdot \nabla \Delta S + \eps^2\Delta \frac{|\nabla S|^2}{2}\\
%			i\eps\nabla S\cdot \nabla (A\Delta S) +i\eps A\frac{(\Delta S)^2}{2}
%			- \eps^2\nabla\Delta S\cdot \nabla A 
%			- \eps^2A\frac{\Delta^2 S}{2} 
%			+ \eps\frac{\Delta( A\Delta S )}{2}\\
%			+i\eps\((-\nabla S\cdot \nabla A - A\frac{\Delta S}{2} + i\frac{\Delta A}{2})\Delta S - A\Delta \frac{|\nabla S|^2}{2}\)
%			\end{array}\),\\
			%
%			&\quad= \(\begin{array}{c}
%			\eps^2\sum_{k=1}^d \nabla \pa_kS \cdot \nabla \pa_k S\\
%			i\eps A(\nabla S\cdot  \nabla \Delta  S) 
%			- \eps^2\nabla\Delta S\cdot \nabla A 
%			- \eps^2A\frac{\Delta^2 S}{2} 
%			+ \eps\frac{\Delta A\Delta S }{2}+ \eps\nabla A\cdot \nabla\Delta S+ \eps\frac{A\Delta^2 S }{2}\\
%			+\eps\(-\frac{\Delta A}{2}\Delta S - iA \sum_{k=1}^d \nabla \pa_kS\cdot \nabla \pa_kS - iA\nabla S\cdot \nabla \Delta S\)
%			\end{array}\),\\
			%
			&\quad= \(\begin{array}{c}
			\eps^2\sum_{k=1}^d \nabla \pa_kS \cdot \nabla \pa_k S\\ 
			 (\eps-\eps^2)\(\nabla\Delta S\cdot \nabla A 
			+A\frac{\Delta^2 S}{2} \)
			-i\eps A \sum_{k=1}^d \nabla \pa_kS\cdot \nabla \pa_kS 
			\end{array}\),\\
		\end{split}\]
		and
		\[\begin{split}
			&\|[\N_1,\N_4](u)\|_{s}
			%\leq \eps C\(\|S\|_{H^{s+4}}^4 + \left\|(1-\eps)\(\nabla\Delta S\cdot \nabla A 
			%+A\frac{\Delta^2 S}{2} \)
			%-iA \sum_{k=1}^d \nabla \pa_kS\cdot \nabla \pa_kS 
			%\right\|_{H^s}^2\)^{1/2},\\
			%
			%&\quad \leq \eps C\(\|S\|_{H^{s+4}}^4 + \|S\|_{H^{s+3}}^2\|A\|_{H^{s+1}}^2 + \|S\|_{H^{s+4}}^2\|A\|_{H^{s}}^2+\|S\|_{H^{s+2}}^4\|A\|_{H^{s}}^2\)^{1/2},\\
			%
			%&\quad 
			\leq \eps C\|u\|_{s+2}^2\(1 + \|u\|_{s+2}\).
		\end{split}\]
		We also get
		\[\begin{split}
			&[\N_2,\N_4](u) = D\N_2(u)\cdot\N_4(u) - D\N_4(u)\cdot\N_2(u)\\
			%
%			&\quad= \(\begin{array}{c}
%				%
%				0\\
%				%
%				i(\eps-1)\frac{\Delta \(-i\eps A\Delta S\)}{2}
%				%
%				+i\eps\(i(\eps-1)\frac{\Delta A}{2}\)\Delta S
%			\end{array}\),\\
			%
%			&\quad= \frac{\eps(\eps-1)}{2}\(\begin{array}{c}
%				%
%				0\\
%				%
%				\Delta \( A\Delta S\)
%				%
%				-\Delta A\Delta S
%			\end{array}\),\\
			%
			&\quad= \frac{\eps(\eps-1)}{2}\(\begin{array}{c}
				0\\
				A\Delta^2S+2\nabla A\cdot \nabla \Delta S%\sum_{k=1}^d\pa_k A\cdot \pa_k\Delta S
			\end{array}\),
		\end{split}\]
		so that
		\[
			\|[\N_2,\N_4](u)\|_{s}\leq \eps C\|u\|^{2}_{s+2}\ ,
		\]
		and the result follows.
		%
	%\end{proof}
	%

	\section{Numerical experiments}\label{sec:num}
	In this part, we illustrate the behavior of the schemes \eqref{scheme1} and \eqref{scheme2} introduced in Section \ref{sec:schemeconstr}. 
%	As mentioned in the introduction, quadratic observables have some peculiarities for this problem. For this reason, the convergence properties of the different schemes will be illustrated separately, on the one hand for the functions $S^\eps$, $A^\eps$ (resp. $\Psi^\eps$ for the Strang splitting scheme) and, on the other hand, for the density $\rho^\eps=|A^\eps|^2$ (resp.  $\rho^\eps=|\Psi^\eps|^2$). 
	We restrict ourselves to the one-dimensional periodic setting in which the equations studied remain unchanged and a Fourier spectral discretization can be used. Note that eikonal equation \eqref{eq:SCflot1phase} is solved using the method of characteristics and an interpolation method based on a direct discrete Fourier series evaluation. Many other methods are available to solve this equation. Let us mention in particular \cite{MR3588729,MR3299095} where these questions are discussed in the context of advection equations. 
	
	We consider the following initial data:
	\be\label{eq:initialdata}
	\begin{split}
		&A_0(x) = \sin(x),\quad S_0(x) ={\sin(x)}/{2},\\
	        &\pe(0,\cdot) = A_0(\cdot)e^{iS_0(\cdot)/\eps},
	\end{split}
	\ee
	and the potential
	\[
		\mathcal{V}(x) = \frac{\sin(x)}{1+\cos(x)^2}
	\]
	where $x\in\mathbb{T} = \RR/2\pi\ZZ$, for which caustics appear numerically at time $T_c = 0.8$. In our simulations, the semiclassical parameter $\eps$ varies from $1$ to $2^{-10}$.
	
	The numerical solutions $(S^\eps, A^\eps)$, resp. $\Psi^\eps$, are compared to corresponding reference solutions $(S^\eps_{ref}, A^\eps_{ref})$, resp. $\Psi^\eps_{ref}$, which, in the absence of analytical solutions, are respectively obtained thanks to our second order splitting method \eqref{scheme2} and thanks to a splitting scheme of order $4$ for \eqref{eq:GPElinear} (see \cite{Yoshida}), with very small time and space steps. More precisely, to compute $(S^\eps_{ref}, A^\eps_{ref})$, we have taken $N_x = 2^{8}$ and $h = 2^{-13}T_f$, and to compute $\Psi^\eps_{ref}$, in order to fit with the constraints on the time step and on the space step
	\[
		h \ll \eps \mbox{ and } \Delta x \ll \eps,
	\]
 the space interval $[0,2\pi]$ is discretized with $N_x = 2^{12}$ points and the time step is $h = 2^{-13}T_f$. 	 

	The various errors that are represented in the figures below are defined as follows:
	\[
		\begin{split}
			&err_{\rho^\eps}(T) = \frac{\|\rho_{ref}^\eps(T) - \rho^{\eps}(T)\|_{L^1}}{\|\rho_{ref}^\eps(T)\|_{L^1}},\quad err_{\pe}(T) = \frac{\|\Psi^\eps_{ref}(T)-\pe(T)\|_{L^2}}{\|\psi^\eps_{ref}(T)\|_{L^2}},
		\end{split}
	\]
	and
	\[
		\begin{split}	
			&err_{(\se,\ae)}(T) = \(\frac{\|S^\eps_{ref}(T)-\se(T)\|_{L^2}^2+\|A^\eps_{ref}(T)-\ae(T)\|_{L^2}^2}{\|S^\eps_{ref}(T)\|_{L^2}^2+\|A^\eps_{ref}(T)\|_{L^2}^2}\)^{1/2},
		\end{split}
	\]
	where 
	\[
		\|u\|_{L^1} = \Delta x \sum_{k = 0}^{N_x-1}|u_k|, \quad\|u\|_{L^2} = \sqrt{\Delta x \sum_{k = 0}^{N_x-1}|u_k|^2},
	\] 
	with $\rho_{ref}^\eps(T) = |\Psi^{\eps}_{ref}(T)|^2$ and $\rho^{\eps}(T) = |\ae(T)|^2$. 

	We first study qualitatively the dynamics, in order to guess what is the time of appearance of the caustics.
	Figures \ref{fig:evolDensity} and \ref{fig:evolPhase} represent the density $|A^\eps|^2$ and the phase $S^\eps$ at times $T_f = 0$, $0.3$, $0.6$, $0.8$, $1$  for $\eps = 2^{-4}$. The caustics appear around $t=0.8$. At time $t=1$, oscillations at other scales than those of the phase can be observed in $|A^\eps|^2$ whereas $S^\eps$ ceases to be smooth. These figures are obtained by using our scheme \eqref{scheme2} with $N_x = 2^{8}$ and $N_t = T_f/h = 2^{9}$.
	
%	Let us now illustrate the behavior of the Strang splitting scheme for \eqref{eq:GPE} at time $T_f = 0.1$ {\em i.e.} before the caustics. 
%	On Figures \ref{fig:Tf1cvgtpsord2-1NLS} and \ref{fig:Tf1cvgtpsord2-2NLS}, errors on $\rho^\eps$ and $\pe$ with respect to the time step $h$, for fixed $N_x = 2^9$, are represented and on Figures \ref{fig:Tf1cvgdxord2-1NLS} to \ref{fig:Tf1cvgdxord2-2NLS}, errors with respect to $\Delta x$, for fixed $N_t = h/T_f = 2^{15}$, are represented. Regarding the observable $\rho^\eps = |\Psi^{\eps}|^2$, \ref{fig:Tf1cvgtpsord2v1NLS}, \ref{fig:Tf1cvgtpsord2v2NLS}, \ref{fig:Tf1cvgdxord2v1NLS} and \ref{fig:Tf1cvgdxord2v2NLS} corroborate the fact that the error behaves as
%	\[
%		\calO \(h^2 + C_{\eps,N}\Delta x^N\)
%	\]
%	where $N>0$ and $C_{\eps,N}\to +\infty$ as $\eps\to 0$ \cite{MR1880116,MR2047194,MR3138106}. This is in agreement with the results obtained by Carles \cite{MR3138106} in the weakly nonlinear case before the caustics; however, our simulations suggest that this behavior persists in the supercritical case. If we observe the wave function, the situation is completely different: the Strang splitting scheme is not UA any more when $h\to 0$. Figures \ref{fig:Tf1cvgtpsord2v3NLS}, \ref{fig:Tf1cvgtpsord2v4NLS}, \ref{fig:Tf1cvgdxord2v3NLS} and \ref{fig:Tf1cvgdxord2v4NLS} indeed suggest that the error of $\Psi^\eps$ behaves like 
%	\[
%		\calO \(\frac{h^2}{\eps} + C_{\eps,N}\Delta x^N\)
%	\]
%	where $N>0$ and $C_{\eps,N}\to +\infty$ as $\eps\to 0$.
	
	Let us now focus on the experiments performed with our first and second-order methods at time $T_f = 0.2$ before the caustics. We start with the first-order scheme \eqref{scheme1}. Figures \ref{fig:Tf1cvgtpsord2-1} and \ref{fig:Tf1cvgtpsord2-2} represent the errors on $\rho^\eps$ and $(\se,\ae)$ {\em w.r.t.}{\@} the time step $h$ for a fixed $N_x = 2^7$. Figures \ref{fig:Tf1cvgdxord2-1} and \ref{fig:Tf1cvgdxord2-2} represent the errors {\em w.r.t.}\@ $\Delta x$ for fixed $N_t = h/T_f = 2^{13}$. All these figures illustrate the fact that our scheme is UA with respect to $\eps$, for the quadratic observables as well as for the whole unknown $(\se,\ae)$ itself. Figures \ref{fig:Tf1cvgtpsord2-1} and \ref{fig:Tf1cvgtpsord2-2} show that \eqref{scheme1} is uniformly of order $1$ in time, whereas Figures  \ref{fig:Tf1cvgdxord2-1} and \ref{fig:Tf1cvgdxord2-2} show that the convergence is uniformly spectral in space. 
	
	Figures \ref{fig:Tf1cvgtpsord4-1} to \ref{fig:Tf1cvgdxord4-2} illustrate the behavior of our second-order scheme \eqref{scheme2} at $T_f = 0.2$: here again, it appears that, before the caustics, our method is UA with an order $2$ in time and with spectral in space accuracy.
	
	Finally, let us explore the behavior of the splitting methods after caustics, by observing the error on the density $\rho^\eps$. Figures \ref{fig:Tf6cvgtpsord4} and \ref{fig:Tf6cvgdxord4} present the same simulations as Figures \ref{fig:Tf1cvgtpsord2-1} and \ref{fig:Tf1cvgdxord2-1}, except that the final time is now $T_f = 1$, \textit{i.e.} we illustrate the behaviors of scheme \eqref{scheme2} after the caustics. In that case, it appears that our methods are not UA, neither in $h$, nor in $\Delta x$, with respect to $\eps$.  Notice that, although it is not UA any longer, our scheme \eqref{scheme2} still has second-order accuracy in time and spectral accuracy in space (with $\eps$-dependent constants). Recall that the same scheme written on \eqref{eq:SCv1} would not be usable in the same situation, since $S^\eps$ ceases to be regular for $\eps>0$, after the formation of caustics.

	\appendix
	\section{Proof of Lemma \ref{lem:flotdiff}}\label{sec:lemdiff}
	\subsection{Study of the differentiability of $\varphi^1$.}
	%
	%
%	\begin{lemma}\label{lem:derivativesflot}
%		%
%		Let $M>0$ and $s>d/2+1$. There are $C_{11} = C_{11}(M)>0$ and $h_{11} = h_{11}(M)>0$ such that for all $h\in[0,h_{1}]$
%		%
%		\begin{enumerate}[(i)]
%			\item $\varphi^1_h : B_{s+2,M}\subset \Sigma_{s+2}\to \Sigma_{s}$	is differentiable,
%			\item for $i = 2,3,4$, $\varphi^i_h : B_{s+2,M}\subset \Sigma_{s+2}\to \Sigma_{s+2}$	is differentiable,
%			\item for $i = 1,2,3,4$, and $u_0\in \Sigma_{s}$
%				\[\begin{split}
%					%
%					&\|\pa_2\varphi^i_h(u)\cdot u_0\|_{s}\leq \exp\(C_{11}h\)\|u_0\|_{s},\\
%					&|\< u_0, \(D\mathcal{N}_i(\varphi^i_h(u))\cdot u_0\)\>_s|\leq C_{11}\|u_0\|^2_{s},
%				\end{split}\]
%		\end{enumerate}
%		%
%		where $\<\cdot,\cdot\>_s$ is defined in \eqref{eq:produitscalaire} and
%		%
%		\[
%			B_{s_0,M} = \{u\in \Sigma_{s_0}:\;\|u\|_{s_0}\leq M\}
%		\]
%		%
%		for all $s_0\geq0$.
%		%
%	\end{lemma}
	%
	%
	%\begin{proof}
		%
		%In the remaining of this section, we study the differentiability of the four flows. 
		
		The proof of this lemma is divided in several steps. Let us fix $s>d/2+1$ and $M>0$.

		\subsubsection{Notations.}
		For any  Banach spaces $E$ and $F$, we denote $\mathscr{L}(E,F)$ the set of continuous linear maps between $E$ and $F$ endowed with the norm 
		\[
			\|l\|_{\mathscr{L}(E,F)} = \sup\{\|l(x)\|_{F},\; x\in E, \; \|x\|_E\leq 1\}
		\]
		where $\|\cdot\|_E$ and $\|\cdot \|_F$ are the norms of $E$ and $F$. 

		Let us define for $u_0 = (S_0,A_0)$%\in \Sigma_{s+1}$
		\[\begin{split}
			\Theta^1_h\cdot u_0 = \(\begin{array}{c}
			 	\widetilde S^1_h
				\\
				\widetilde A^1_h
			\end{array}\) %= 
			 %\pa_2\varphi^i_h(u)\cdot u_0
		\end{split}\]
		the solution of
		\[\begin{split}
			 &\pa_{h}\Theta^1_h = D\N_1(\varphi^1_h(u))\cdot \Theta^1_h\\
			 &\Theta^1_0\cdot u_0 = u_0.
		\end{split}\]
		We denote $\Gamma^1_h =\varphi^1_h(u + u_0)-\varphi^1_h(u) - \Theta^1_h\cdot u_0$,
		\[
			\varphi^1_h(u) = \(\begin{array}{c}
				S^1_h\\
				A^1_h
			\end{array}\)\mbox{, }\varphi^1_h(u + u_0) = \(\begin{array}{c}
				\underline{S^1_h}\\
				\underline{A^1_h}
			\end{array}\),
		\]
		$v^1_h = \nabla S^1_h$, $\underline v^1_h = \nabla \underline S^1_h$, $\widetilde v^1_h = \nabla \widetilde S^1_h$, $\omega^1_h = \underline S^1_h-S^1_h-\widetilde S^1_h$ and $B^1_h = \underline A^1_h-A^1_h-\widetilde A^1_h$.
		%
		
%		The proof of the following lemmas is organized as follows : we prove for any $i = 1,2,3$ that : 
%		%
%		\begin{enumerate}[(a)]
%			\item the application $\Theta^1_h$ is well-defined and continuous,
%			\item the flow $\varphi^1_h$ is differentiable in $u$ and $\Theta^1_h$ is its derivative,
%			\item the function $(h,u)\mapsto \varphi^1_h(u)$ is a $C^1$-application.
%		\end{enumerate}

		%=========================================
		%\textit{ Case : $i=1$.}
%		\begin{lemma}\label{lem:flot1diff}
%			%
%			Let $s>d/2+1$ and $M>0$. There exists $h_{8} = h_{8}(M)>0$ such that the following two points hold true.
%			%
%			\begin{enumerate}[(i)]
%				\item \label{lempt11} Let $s_1\geq s$. The function 
%				%
%				\[
%					(h,u)\in [0,h_{8}]\times\(B_{s}(M)\cap \Sigma_{s_1+3}\)\mapsto \varphi^1_h(u)\in\Sigma_{s_1}
%				\]
%				is  a $C^1$-application.
%				%
%				\item \label{lempt12} Let $s_2\geq s$ and $M_2>0$. There exists $C_{8} = C_{8}(M,M_2)>0$ such that for any $u\in B_{s}(M)\cap B_{s_2+1}(M_2)\cap \Sigma_{s_2+3}$ and any $u_0\in \Sigma_{s_2}$, we have
%%				%
%				\[\begin{split}
%					%
%					&\|\pa_2\varphi^1_h(u)\cdot u_0\|_{s_2}\leq \exp\(C_{8}h\)\|u_0\|_{s_2},\\
%					&|\< u_0, D\mathcal{N}_1(\varphi^1_h(u))\cdot u_0\>_{s_2}|\leq C_{8}\|u_0\|^2_{s_2},
%				\end{split}\]
%		\end{enumerate}
%		%
%		where $\<\cdot,\cdot\>_{s_0}$ is defined in \eqref{eq:produitscalaire} and $B_{s_0}(M)$ in \eqref{eq:boule}.
%		%
%		\end{lemma}
		
		%
		%
		
		%\begin{proof}
		%
		%
		
		\subsubsection{Definition of $h_{8}$.}
		Lemma \ref{lem:flot1} ensures that for any	
		 $u\in B_{s}(2M)$,
		we have for $h\in [0,h_{5}(2M)]$ that
		\be\begin{split}\label{eq:lemdiffcontrole}
			&\|\varphi^1_h(u)\|_{s}\leq 4M.
		\end{split}\ee
		We denote $h_8(M) = h_5(2M)$.

		Let $s'\geq s$. If moreover, $u\in\Sigma_{s'}$, then we have
		\be\begin{split}\label{eq:lemdiffcontrole2}
			&\|\varphi^1_h(u)\|_{s'}\leq \exp\(C_{5}(2M)h\)\|u\|_{s'}.
		\end{split}\ee

		\subsubsection{Continuity of $\varphi^1$.}
		Let $s'\geq s$, $M'>0$ and $u_1,u_2\in B_{s}(M)\cap B_{s'+1}(M')$.
		
		By \eqref{eq:lemdiffcontrole} and \eqref{eq:lemdiffcontrole2}, we obtain that
		$\varphi^1_h(u_1)$ and $\varphi^1_h(u_2)$ are well-defined on $[0,h_8]$ and satisfy
		\[
			\|\varphi^1_h(u_1)\|_{s'+1}+\|\varphi^1_h(u_2)\|_{s'+1}\leq 2\exp\(C_{5}(2M)h_8\)M'
		\]
		for all $h\in[0,h_8].$
		By Lemma \ref{lem:stabaux} and an integration by parts, we get that there exists $C = C(M,M')>0$ such that for all $h\in[0,h_8]$
		\be\label{eq:continuityflot1}
			\|\varphi^1_h(u_1)- \varphi^1_h(u_2)\|_{s'}\leq C\|u_1-u_2\|_{s'+1}.
		\ee
		Moreover, for fixed $u\in B_{s}(M)\cap B_{s'+1}(M')$, Lemma \ref{lem:flot1} ensures that $h\in[0,h_8]\mapsto \varphi^1_h(u)\in \Sigma_{s'}$ is continuous so that
		\be\label{eq:continuityflot12}
			(h,u)\in[0,h_8]\times \Sigma_{s'+1}\mapsto \varphi^1_h(u)\in \Sigma_{s'}
		\ee
		is also continuous.

		\subsubsection{Well-posedness, continuity and estimates on the norm for $\Theta^1_h$.}
		Let $s_2\geq s$, $M_2>0$, $u\in B_{s}(M)\cap B_{s_2+1}(M_2)$ and $u_0\in\Sigma_{s_2}$. 
		We recall that the function $\Theta^1_h\cdot u_0 =: (\widetilde S^1_h,\widetilde A^1_h)^T$ satisfies
		\[\begin{split}
			&\pa_h \widetilde S^1_h  +\nabla S^1_h\cdot \nabla \widetilde S^1_h = 0\\
			&\pa_h \widetilde A^1_h +\nabla S^1_h\cdot \nabla \widetilde A^1_h 
			+ \frac{\widetilde A^1_h}{2}\Delta S^1_h
			= -\nabla \widetilde S^1_h\cdot \nabla  A^1_h 
			- \frac{A^1_h}{2}\Delta \widetilde S^1_h
			+\frac{i}{2}\Delta \widetilde A^1_h
		\end{split}\]
		and $\Theta^1_0\cdot u_0 = u_0$.
		The existence and uniqueness of $\widetilde S^1_h$ follows for instance from the method of  characteristics.
		We have
		\[
			\pa_h \widetilde v^1_h + \(v^1_h\cdot \nabla\)\widetilde v^1_h = - \( \widetilde v^1_h\cdot \nabla\) v^1_h
		\]
		and Lemma \ref{eq:burger} with $R = - \( \widetilde v^1_h\cdot \nabla\) v^1_h$ gives us  that 
		\[\begin{split}
			\pa_h\|\widetilde v^1_h\|_{H^{s_2+1}}^2
			%
%			&\leq C\(\|\widetilde v^1_h\|_{H^{s_2+1}}^2\|\nabla v^1_h\|_{L^\infty}
%			+ \|\widetilde v^1_h\|_{H^{s_2+1}}\|v^1_h\|_{H^{s_2+1}}\|\nabla \widetilde v^1_h\|_{L^\infty}\)
%			+ \<\Lambda^{s_2+1}\widetilde v^1_h,\Lambda^{s_2+1}R\>\\
			%
			&\leq C\|\widetilde v^1_h\|_{H^{s_2+1}}^2\|S^1_h\|_{H^{s_2+3}}\leq C\|\widetilde v^1_h\|_{H^{s_2+1}}^2\|\varphi^1_h(u)\|_{s_2+1}.
		\end{split}\]
		We also have
		\[
			\pa_h \|\widetilde S^1_h\|^2_{L^2}\leq C\|\widetilde S^1_h\|_{L^2}\|\widetilde S^1_h\|_{H^{1}}\|S^1_h\|_{W^{1,\infty}}
		\]
		so that
		\[
			\pa_h \|\widetilde S^1_h\|^2_{H^{s_2+2}}\leq C\|\widetilde S^1_h\|^2_{H^{s_2+2}}\|\varphi^1_h(u)\|_{s_2+1}.
		\]
		The existence and uniqueness of $\widetilde A^1_h$ follows from the fact that
		$\widetilde w^1_h = \widetilde A^1_h\exp\(iS^1_h\)$ satisfies
		\[
			i\pa_h \widetilde w^1_h = -\frac{\Delta}{2} \widetilde w^1_h 
			-\(\nabla \widetilde S^1_h\cdot \nabla  A^1_h 	+ \frac{A^1_h}{2}\Delta \widetilde S^1_h\)\exp\(iS^1_h\).
		\]
		Lemma \ref{eq:transport} with 
		$R = -\nabla \widetilde S^1_h\cdot \nabla  A^1_h 
			- \frac{A^1_h}{2}\Delta \widetilde S^1_h
			+\frac{i}{2}\Delta \widetilde A^1_h$
		ensures that
		\[\begin{split}
			\pa_t\| \widetilde A^1_h\|_{H^{s_2}}^2
			%
%			&\leq C\| \widetilde A^1_h\|_{H^{s_2}}^2\| S^1_h\|_{H^{s_2+2}}+ \RE \<\Lambda^{s_2} \widetilde A^1_h,\Lambda^{s_2}R\>\\
%			%
%			&\leq C\(\| \widetilde A^1_h\|_{H^{s_2}}^2\| S^1_h\|_{H^{s_2+2}} + \| \widetilde A^1_h\|_{H^{s_2}}\| \widetilde S^1_h\|_{H^{s_2+2}}\| A^1_h\|_{H^{s_2+1}} \)\\
			%
			&\leq C\| \Theta^1_h\cdot u_0\|_{s_2}^2\| \varphi^1_h(u)\|_{s_2+1} 		
		\end{split}\]
		so that
		\[
			\pa_t\| \Theta^1_h\cdot u_0\|_{s_2}^2\leq C\| \Theta^1_h\cdot u_0\|_{s_2}^2\| \varphi^1_h(u)\|_{s_2+1}.
		\]
		By \eqref{eq:lemdiffcontrole2} and Gronwall's Lemma, there is $C_{8} = C_{8}(M,M_2)>0$ such that for any $h\in[0,h_{8}]$,
		\be\label{eq:partlemdiff}
			\| \Theta^1_h\cdot u_0\|_{s_2}\leq \exp\(C_{8}h\)\|u_0\|_{s_2}.
		\ee
		Using directly the integrations by parts of the proof of Lemmas \ref{eq:burger} and \ref{eq:transport}, we obtain actually that 
		\[
			|\< u_0, D\mathcal{N}_1(\varphi^1_h(u))\cdot u_0\>_{s_2}|\leq C_{8}\|u_0\|^2_{s_2},
		\]
		for all $u_0\in \Sigma_{s_2}$.

		\subsubsection{Differentiability of $\varphi^1$.}
		By Lemma \ref{lem:flot1} and equations \eqref{eq:SCflot1}, the application 
		\[
			h\in[0,h_8]\mapsto \varphi^1_h(u)\in\Sigma_{s_1}
		\]
		 is differentiable in $h$ for any $u\in B_{s}(M)\cap \Sigma_{s_1+2}$.

		Let us prove that $\varphi^1_h$ is differentiable in $u$ and that $\Theta^1_h$ is its derivative.

		Let $M_1>0$ and $u,u_0\in B_{s}(M)\cap B_{s_1+2}(M_1)$. We have that
		$u,u + u_0\in B_{s}(2M)\cap B_{s_1+2}(2M_1)$.
		By \eqref{eq:lemdiffcontrole}  and \eqref{eq:lemdiffcontrole2}, we obtain that for all $h\in[0,h_{8}]$,
		\[\begin{split}
			&\|\varphi^1_h(u)\|_{s_1+2} + \|\varphi^1_h(u+u_0)\|_{s_1+2}\leq 4\exp\(C_{5}(2M)h\)M_1.
		\end{split}\]
		We have
		\[\begin{split}
			\pa_h \nabla\omega^1_h%\(\underline{v^1_h}-v^1_h-\widetilde{v^1_h}\) 
			%
%			= -({\underline{v^1_h}}\cdot\nabla)\underline{v^1_h}
%			%
%			+(v^1_h\cdot\nabla){v^1_h}
%			%
%			+\(v^1_h\cdot \nabla\)\widetilde v^1_h
%			%
%			 + \( \widetilde v^1_h\cdot \nabla\) v^1_h
%			%
%			\\
%			%
%			= -({{v^1_h}}\cdot\nabla)\(\underline{v^1_h}-v^1_h-\widetilde v^1_h\)
%			%
%			- (\underline{v^1_h}\cdot \nabla)\underline{v^1_h}
%			%
%			+ ({v^1_h}\cdot \nabla)\underline{v^1_h}
%			%
%			%
%			\\
%			%
%			%
%			+ \( \widetilde v^1_h\cdot \nabla\) v^1_h
%			%
%			\\
%			%
%			= -({{v^1_h}}\cdot\nabla)\(\underline{v^1_h}-v^1_h-\widetilde v^1_h\)
%			%
%			- \(\(\underline{v^1_h}-v^1_h-\widetilde v^1_h \)\cdot \nabla\)\underline{v^1_h}
%			%
%			\\
%			%
%			-(\widetilde v^1_h \cdot \nabla)\underline{v^1_h}
%			%
%			%
%			+ \( \widetilde v^1_h \cdot \nabla\) v^1_h
%			%
%			\\
			%
			= -(v^1_h+\widetilde v^1_h)\cdot\nabla(\nabla\omega^1_h)%\(\underline{v^1_h}-v^1_h-\widetilde{v^1_h}\)
			- \(\nabla\omega^1_h\cdot \nabla\)\underline{v^1_h}
			- \( \widetilde v^1_h\cdot \nabla\)\widetilde v^1_h.
		\end{split}\]
		By Lemma \ref{eq:burger}, we obtain taking $v_1 = v^1_h+\widetilde v^1_h$ and
		\[
			R = - \(\nabla \omega^1_h\cdot \nabla\)\underline{v^1_h}
			- \(\widetilde v^1_h\cdot \nabla\)\widetilde v^1_h
		\]
		that
		\[\begin{split}
			\pa_t\|\nabla\omega^1_h\|_{H^{s_1+1}}^2
			&\leq C\|\nabla\omega^1_h\|_{H^{s_1+1}}^2\(\|v^1_h\|_{H^{s_1+1}}+\|\widetilde v^1_h\|_{H^{s_1+1}}+\|\underline v^1_h\|_{H^{s_1+2}}\)
			%
			%\\
			%
			%+ \|\omega^1_h\|_{H^{s_1+1}}\|v^1_h+\widetilde v^1_h\|_{H^{s_1+1}}\|\nabla \omega^1_h\|_{L^\infty}
			%
			\\
			&\qquad+ C\|\nabla\omega^1_h\|_{H^{s_1+1}}\|\widetilde v^1_h\|_{H^{s_1+2}}^2.
		\end{split}\]
		%
%		and
%		\[
%			\pa_t\|\nabla\omega^1_h\|_{H^{s+1}}^2\leq C\|\nabla\omega^1_h\|_{H^{s+1}}^2\(1+\|\varphi^1_h(u)\|_{s+1}+\|\Theta^1_h\cdot u_0\|_{s}\) +\|\Theta^1_h\cdot u_0\|_{s+1}^4
%		\]
%		%
		Moreover, we have
		\[\begin{split}
			\pa_h\omega^1_h
			%
%			=
%			%
%			-\frac{1}{2}\(|\underline v^1_h|^2-|v^1_h|^2-2v^1_h\cdot \widetilde v^1_h\)
%			%
%			\\
%			%
%			 = -\frac{1}{2}((\underline v^1_h-v^1_h-\widetilde v^1_h)\cdot (\widetilde  v^1_h+v^1_h)  
%			 %
%			 + (v^1_h+\widetilde v^1_h)\cdot (\widetilde  v^1_h+v^1_h)
%			 %
%			 \\
%			 %
%			 -\underline v^1_h\cdot (\widetilde v^1_h + v^1_h)
%			 %
%			 +|\underline v^1_h|^2
%			 %
%			 -|v^1_h|^2-2v^1_h\cdot \widetilde v^1_h
%			 %
%			 )
%			 %
%			 \\
%			%
%			 = -\frac{1}{2}((\underline v^1_h-v^1_h-\widetilde v^1_h)\cdot (\widetilde  v^1_h+v^1_h)  
%			 %
%			 %
%			 \\
%			 %
%			 +\underline v^1_h\cdot (\underline v^1_h-\widetilde v^1_h - v^1_h)
%			 %
%			 +|\widetilde v^1_h|^2
%			 %
%			 )\\
			%
			%
			 = -\frac{1}{2}\(\nabla \omega^1_h\cdot (\widetilde  v^1_h+v^1_h)  
			 +\underline v^1_h\cdot \nabla \omega^1_h
			 +|\widetilde v^1_h|^2
			 \)
		\end{split}\]
		so that
		\[\begin{split}
			&\pa_h\|\omega^1_h\|^2_{H^{s_1+2}}\leq\|\Theta^1_h\cdot u_0\|_{s_1+1}^4
			\\
			&\qquad+ C\|\omega^1_h\|_{H^{s_1+2}}^2\(1+\|\varphi^1_h(u)\|_{s_1}+\|\varphi^1_h(u+u_0)\|_{s_1+1}+\|\Theta^1_h\cdot u_0\|_{s_1}\)
		\end{split}\]
		We also have
		\[\begin{split}
			\pa_h B^1_h 
			=
			-\nabla \underline S^1_h\cdot  \nabla B^1_h 
			-  B^1_h\frac{\Delta  \underline S^1_h}{2} 
			-\nabla \omega^1_h\cdot  \nabla (A^1_h+\widetilde A^1_h) 
			-  (A^1_h+\widetilde A^1_h)\frac{\Delta \omega_h^1}{2} 
			\\
			+i\frac{\Delta B^1_h}{2}
			- \nabla \widetilde S^1_h\cdot  \nabla\widetilde A^1_h - \widetilde A^1_h\frac{\Delta \widetilde S^1_h}{2}
			\\
		\end{split}\]
		and Lemma \ref{eq:transport} ensures taking 
		\[
			R = -\nabla \omega^1_h\cdot  \nabla (A^1_h+\widetilde A^1_h) 
			-  (A^1_h+\widetilde A^1_h)\frac{\Delta \omega_h^1}{2} 
			+i\frac{\Delta B^1_h}{2}
			- \nabla \widetilde S^1_h\cdot  \nabla\widetilde A^1_h - \widetilde A^1_h\frac{\Delta \widetilde S^1_h}{2}
		\]
		that,
		\[\begin{split}
			&\pa_t\|B^1_h\|_{H^{s_1}}^2
			%
%			&\leq C\|B^1_h\|_{H^{s}}^2\|\underline S^1_h\|_{H^{s+2}}+ \RE \<\Lambda^{s}B^1_h,\Lambda^{s}R\>
%			%
%			\\
%			%
%			&\leq C\|B^1_h\|_{H^{s}}^2\|\underline S^1_h\|_{H^{s+2}}
%			%
%			+ C\|B^1_h\|_{H^{s}}\|\omega^1_h\|_{H^{s+2}}\(\|A^1_h\|_{H^{s+1}}+\|\widetilde A^1_h\|_{H^{s+1}}\)
%			%
%			\\
%			&\qquad  + C\|B^1_h\|_{H^{s}}\|\Theta^1_h\cdot u_0\|^2_{s+1}
%			%
%			\\
			\leq \|\Theta^1_h\cdot u_0\|^4_{s_1+1}
			\\
			&\quad+C\|\Gamma^1_h\|_{s_1}^2
			\(1+\|\varphi^1_h(u+u_0)\|_{s_1}+\|\varphi^1_h(u)\|_{s_1+1}+\|\Theta^1_h\cdot u_0\|_{s_1+1}\)
		\end{split}\]
		and
\[\begin{split}
			&\pa_h\|\Gamma^1_h\|_{s_1}^2
			\leq \|\Theta^1_h\cdot u_0\|^4_{s_1+1}
			\\
			&\quad+C\|\Gamma^1_h\|_{s_1}^2
			\(1+\|\varphi^1_h(u+u_0)\|_{s_1+1}+\|\varphi^1_h(u)\|_{s_1+1}+\|\Theta^1_h\cdot u_0\|_{s_1+1}\).
		\end{split}\]
		By \eqref{eq:partlemdiff} with $s_2 = s_1+1$ and Gronwall's Lemma, we get that there exists $C = C(M,M_1)>0$ such that for all $h\in[h,h_{8}]$,
		\[
			\|\Gamma^1_h\|_{s_1}\leq C\|u_0\|_{s_1+1}^2\leq C\|u_0\|_{s_1+2}^2%
		\]
		%
		%
		%============
		We proved that for any $h\in[0,h_{8}]$
		\[
			\varphi^1_h : B_{s}(M)\cap \Sigma_{s_1+2}\to \Sigma_{s_1}
		\]
		is differentiable in $B_{s}(M)\cap \Sigma_{s_1+2}$. 

		\subsubsection{Proof of point \eqref{lempt11}.}
		Let us prove that the application
		\[
			(h,u)\in[0,h_{8}]\times \(B_{s}(M)\cap \Sigma_{s_1+4}\) \mapsto \varphi^1_h(u)\in \Sigma_{s_1}
		\]
		is a $C^1$-function.
		
		%\textcolor{red}{ A VOIR AU DESSUS.}
		Using equations \eqref{eq:SCflot1} and \eqref{eq:continuityflot12}, we get that
		\[
			(h,u)\in[0,h_{8}]\times \(B_{s}(M)\cap \Sigma_{s_1+3}\)\mapsto \varphi^1_h(u)\in \Sigma_{s_1+2}
		\]
		is continuous so that the partial derivative
		\[
			(h,u)\in[0,h_{8}]\times \(B_{s}(M)\cap \Sigma_{s_1+3}\) \mapsto \pa_h\varphi^1_h(u)  = \mathcal{N}_1\varphi^1_h(u) \in \Sigma_{s_1}
		\]
		is also continuous.
		Let us study the continuity of 
		\[
			(h,u)\mapsto \pa_2\varphi^1_h(u).
		\]
		Let $u_1,u_2\in B_{s}(M)\cap B_{s_1+2}(M_1)$. We denote $\varphi^1_h(u_i) = (S^{1,i}_h, A^{1,i}_h)$ and $\pa_2\varphi^1_h(u_i)\cdot u_0 = (\widetilde S^{1,i}_h,  \widetilde A^{1,i}_h)$ for $i=1,2$. We have
		\[\begin{split}
			&\pa_h \(\widetilde S^{1,1}_h-\widetilde S^{1,2}_h\)  
			+\nabla S^{1,1}_h\cdot \nabla \(\widetilde S^{1,1}_h-\widetilde S^{1,2}_h\) =  - \nabla  \( S^{1,1}_h- S^{1,2}_h\)\cdot \nabla \widetilde S^{1,2}_h
		\end{split}\]
		so that
		\[\begin{split}
%			%
%			&\pa_h\(\nabla\widetilde S^{1,1}_h-\nabla\widetilde S^{1,2}_h\)  
%			%
%			+\(\nabla S^{1,1}_h\cdot \nabla \)\(\nabla\widetilde S^{1,1}_h-\nabla\widetilde S^{1,2}_h\)
%			%
%			+\(\(\nabla\widetilde S^{1,1}_h-\nabla\widetilde S^{1,2}_h\)\cdot\nabla\)\nabla S^{1,1}_h
%			%
%			\\
%			%
%			 &=  - \(  \( \nabla S^{1,1}_h- \nabla S^{1,2}_h\)\cdot \nabla\)\nabla \widetilde S^{1,2}_h 
%			  - \(\nabla \widetilde S^{1,2}_h\cdot \nabla\)\( \nabla S^{1,1}_h- \nabla S^{1,2}_h\)
%			 %
%			 \\
			 &\pa_h\(\nabla\widetilde S^{1,1}_h-\nabla\widetilde S^{1,2}_h\)  
			+\(\nabla S^{1,1}_h\cdot \nabla \)\(\nabla\widetilde S^{1,1}_h-\nabla\widetilde S^{1,2}_h\)
			\\
			 &\quad=
			-\(\(\nabla\widetilde S^{1,1}_h-\nabla\widetilde S^{1,2}_h\)\cdot\nabla\)\(\nabla S^{1,1}_h
			  + \widetilde S^{1,2}_h \)
			  \\
			  &\qquad
			  - \(\nabla \widetilde S^{1,2}_h\cdot \nabla\)\( \nabla S^{1,1}_h- \nabla S^{1,2}_h\).
		\end{split}\]
		By Lemma \ref{eq:burger} with $v_1 = \nabla S^{1,1}_h$ and
		\[\begin{split}
			R &= -\(\(\nabla\widetilde S^{1,1}_h-\nabla\widetilde S^{1,2}_h\)\cdot\nabla\)\(\nabla S^{1,1}_h
			  + \widetilde S^{1,2}_h \)
			  \\
			  &\quad- \(\nabla \widetilde S^{1,2}_h\cdot \nabla\)\( \nabla S^{1,1}_h- \nabla S^{1,2}_h\),
		\end{split}\]
		which satisfies
		\[\begin{split}
			\|R\|_{H^{s_1+1}}
			&\leq 
			C\|\widetilde S^{1,1}_h-\widetilde S^{1,2}_h\|_{H^{s_1+2}}\(\|S^{1,1}_h\|_{H^{s_1+3}}+\|\widetilde S^{1,2}_h\|_{H^{s_1+3}}\)
			\\
			&\qquad +C\|S^{1,1}_h-S^{1,2}_h\|_{H^{s_1+3}}\|\widetilde S^{1,2}_h\|_{H^{s_1+2}}
		\end{split}\]
		we obtain that
		\[\begin{split}
			\pa_t\|\nabla\widetilde S^{1,1}_h-\nabla\widetilde S^{1,2}_h\|_{H^{s_1+1}}^2
			&\leq C\|\nabla\widetilde S^{1,1}_h-\nabla\widetilde S^{1,2}_h\|_{H^{s_1+1}}^2
			\(\|S^{1,1}_h\|_{H^{s_1+3}}+\|\widetilde S^{1,2}_h\|_{H^{s_1+3}}\)
			\\
			&+C\|\nabla\widetilde S^{1,1}_h-\nabla\widetilde S^{1,2}_h\|_{H^{s_1+1}}\|S^{1,1}_h-S^{1,2}_h\|_{H^{s_1+3}}\|\widetilde S^{1,2}_h\|_{H^{s_1+2}}.
		\end{split}\]
		Moreover, we have 
		\[\begin{split}
			&\pa_t \|\widetilde S^{1,1}_h-\widetilde S^{1,2}_h\|^2_{L^2}\\
			&\;\leq C
			 \|\widetilde S^{1,1}_h-\widetilde S^{1,2}_h\|_{L^2}\(\|\widetilde S^{1,1}_h-\widetilde S^{1,2}_h\|_{H^1}\|S^{1,1}_h\|_{W^{1,\infty}}
			 +\|S^{1,1}_h-S^{1,2}_h\|_{H^1}\|\widetilde S^{1,2}_h\|_{W^{1,\infty}}
			\)
		\end{split}\]
		so that
		\[\begin{split}
			\pa_t\|\widetilde S^{1,1}_h-\widetilde S^{1,2}_h\|_{H^{s_1+2}}^2
			&\leq C\|\widetilde S^{1,1}_h-\widetilde S^{1,2}_h\|_{H^{s_1+2}}^2
			\(\|S^{1,1}_h\|_{H^{s_1+3}}+\|\widetilde S^{1,2}_h\|_{H^{s_1+3}}\)
			\\
			&\qquad+C\|\widetilde S^{1,1}_h-\widetilde S^{1,2}_h\|_{H^{s_1+2}}\|S^{1,1}_h-S^{1,2}_h\|_{H^{s_1+3}}\|\widetilde S^{1,2}_h\|_{H^{s_1+2}}.
		\end{split}\]
		We also have
		\[\begin{split}
			&\pa_h \(\widetilde A^{1,1}_h-\widetilde A^{1,2}_h\)
			 +\nabla S^{1,1}_h\cdot \nabla \(\widetilde A^{1,1}_h-\widetilde A^{1,2}_h\) 
			+ \(\widetilde A^{1,1}_h-\widetilde A^{1,2}_h\)\frac{\Delta S^{1,1}_h}{2}
			\\
			&\quad
			=-\nabla \( S^{1,1}_h-S^{1,2}_h\)\cdot \nabla \widetilde A^{1,2}_h 
			- \frac{\widetilde A^{1,2}_h}{2}\Delta \(S^{1,1}_h-S^{1,2}_h\)
			\\
			&\qquad
			-\nabla \(\widetilde S^{1,1}_h-\widetilde S^{1,2}_h\)\cdot \nabla  A^{1,1}_h 
			- \frac{A^{1,1}_h}{2}\Delta \(\widetilde S^{1,1}_h-\widetilde S^{1,2}_h\)
			\\
			&\qquad
			-\nabla \widetilde S^{1,2}_h\cdot \nabla \(A^{1,1}_h-A^{1,2}_h\) 
			- \frac{\(A^{1,1}_h-A^{1,2}_h\)}{2}\Delta \widetilde S^{1,2}_h
			+\frac{i}{2}\Delta \(\widetilde A^{1,1}_h-\widetilde A^{1,2}_h\).
		\end{split}\]
		Using Lemma \ref{eq:transport} with $v_1 = \nabla S^{1,1}_h$ and
		\[\begin{split}
			&R = -\nabla \( S^{1,1}_h-S^{1,2}_h\)\cdot \nabla \widetilde A^{1,2}_h 
			- \frac{\widetilde A^{1,2}_h}{2}\Delta \(S^{1,1}_h-S^{1,2}_h\)
			\\
			&\qquad
			-\nabla \(\widetilde S^{1,1}_h-\widetilde S^{1,2}_h\)\cdot \nabla  A^{1,1}_h 
			- \frac{A^{1,1}_h}{2}\Delta \(\widetilde S^{1,1}_h-\widetilde S^{1,2}_h\)
			\\
			&\qquad
			-\nabla \widetilde S^{1,2}_h\cdot \nabla \(A^{1,1}_h-A^{1,2}_h\) 
			- \frac{\(A^{1,1}_h-A^{1,2}_h\)}{2}\Delta \widetilde S^{1,2}_h
			+\frac{i}{2}\Delta \(\widetilde A^{1,1}_h-\widetilde A^{1,2}_h\).
		\end{split}\]
		which satisfies
		\[\begin{split}
			&\RE\<\Lambda^{s_1}\(\widetilde A^{1,1}_h-\widetilde A^{1,2}_h\), \Lambda^{s_1}R\>
			\\
			&\quad\leq C\|\widetilde A^{1,1}_h-\widetilde A^{1,2}_h\|_{H^{s_1}}
			\|S^{1,1}_h-S^{1,2}_h\|_{H^{s_1+2}}\|\widetilde A^{1,2}_h\|_{H^{s_1+1}}
			\\&\qquad
			+C\|\widetilde A^{1,1}_h-\widetilde A^{1,2}_h\|_{H^{s_1}}\|\widetilde S^{1,1}_h-\widetilde S^{1,2}_h\|_{H^{s_1+2}}\|A^{1,1}_h\|_{H^{s_1+1}}
			\\&\qquad 
			+C\|\widetilde A^{1,1}_h-\widetilde A^{1,2}_h\|_{H^{s_1}}\|A^{1,1}_h-A^{1,2}_h\|_{H^{s_1+1}}\|\widetilde S^{1,2}_h\|_{H^{s_1+2}}
		\end{split}\]
		we obtain that
		\[\begin{split}
			&\pa_h\|\(\pa_2\varphi^1_h(u_1)-\pa_2\varphi^1_h(u_2)\)\cdot u_0\|_{{s_1}}^2
			\\&\qquad
			\leq C\|\(\pa_2\varphi^1_h(u_1)-\pa_2\varphi^1_h(u_2)\)\cdot u_0\|^2_{s_1}\(\|\varphi^1_h(u_1)\|_{s_1+1} + \|\pa_2\varphi^1_h(u_2)\cdot u_0\|_{s_1+1}\)
			\\&\qquad
			+C\|\(\pa_2\varphi^1_h(u_1)-\pa_2\varphi^1_h(u_2)\)\cdot u_0\|_{s_1}
			\|\varphi^1_h(u_1)-\varphi^1_h(u_2)\|_{s_1+1}\|\pa_2\varphi^1_h(u_2)\cdot u_0\|_{s_1+1}.
		\end{split}\]
		Let us recall that $u_1,u_2\in B_{s}(M)\cap B_{s_1+2}(M_1)$. By \eqref{eq:lemdiffcontrole2}, \eqref{eq:continuityflot1} and \eqref{eq:partlemdiff}  with $s_2 = s_1$ and Gronwall's Lemma, we get that for all $h\in[0,h_{8}]$, 
		\[
			u\in B_{s}(M)\cap\Sigma_{s_1+2}\mapsto  \pa_2\varphi^1_h(u)\in \mathscr{L}(\Sigma_{s_1+2},\Sigma_{s_1})
		\]
		is continuous. Hence, we obtain that
		\[\begin{split}
			&(h,u)\in[0,h_{8}]\times \(B_{s}(M)\cap \Sigma_{s_1+3}\)\mapsto (\pa_2\varphi^1_h(u),\pa_h\varphi^1_h(u))\in \mathscr{L}(\Sigma_{s_1+3},\Sigma_{s_1})\times \Sigma_{s_1}
		\end{split}\]
		is continuous and the result follows.

		\subsection{Study of the differentiability of $\varphi^2$ and $\varphi^3$.}

		Let $u,u_0\in \Sigma_{s}$. Since $\mathcal{N}_2$ is linear, we have that
		\[
			\Theta^2_h\cdot u_0 = \varphi^2_h(u_0),
		\]
		$\varphi^2_h$ is differentiable on $\Sigma_{s}$ and for any $h\geq 0$, 
		\[\begin{split}
			&\| \pa_2\varphi^2_h(u)\cdot u_0\|_{s} = \|\varphi^2_h(u_0)\|_{s}= \| u_0\|_{s},\\
			&|\< u_0, D\mathcal{N}_2(\varphi^2_h(u))\cdot u_0\>_{s}|\leq C\|u_0\|^2_{s}.
		\end{split}\]
		and the result follows.
		We easily prove that $\varphi^3_i$ is differentiable, that for any $h\geq 0$, $\Theta^3_h\cdot u_0 = u_0$, that
		\[
			\| \Theta^3_h\cdot u_0\|_{s} = \| \Theta^3_0\cdot u_0\|_{s}= \| u_0\|_{s}.
		\]
		and
		\[
			|\< \chi, \(D\mathcal{N}_3(\varphi^3_h(u))\cdot \chi\)\>_{s}|\leq C\|\chi\|^2_{s},
		\]
		for all $\chi\in \Sigma_s$.

\bibliographystyle{siam}
\bibliography{bibliographiebibdesk}	
%%%%%%%%%%%%%%%%%%%%%%%%%%%%%%%%%%%%%%%%%%%%%	
	
	\begin{figure}[p]
	\centering
	\hspace{-4cm}
	\begin{subfigure}[t]{0.67\textwidth}
		\centering
		\includegraphics[height=5.cm,width=\textwidth]{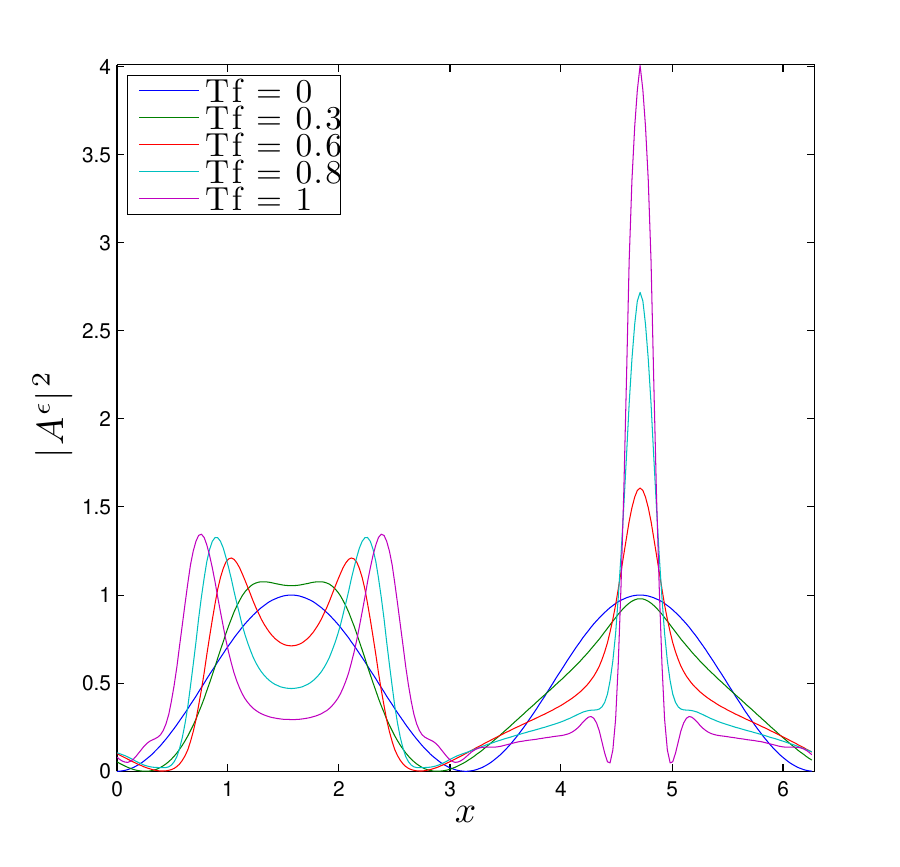}
		\caption{\label{fig:evolDensity} Evolution of the density $|A^\eps|^2$ for $\eps = 2^{-4}$.} 	
	\end{subfigure}
	\hspace{-1cm}
	\begin{subfigure}[t]{0.67\textwidth}
		\centering
		\includegraphics[height=5.cm,width=\textwidth]{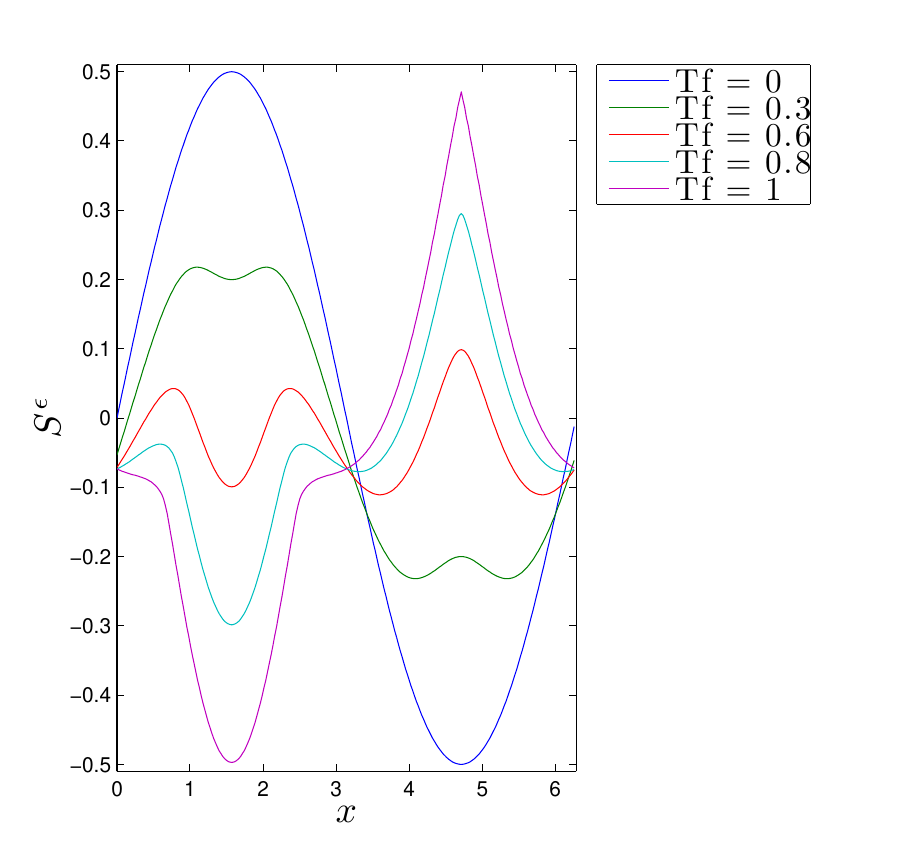}
		\caption{\label{fig:evolPhase} Evolution of the phase $S^\eps$ for $\eps = 2^{-4}$.} 
	\end{subfigure}
	\hspace{-4cm}
	\caption{Evolution of the density and of the phase}
	\end{figure}
%	%%%%%%%%%%
%	%% Figures ordre 2 WKB
%	%%%%
	\begin{figure}[p]
	\centering
	\hspace{-4cm}
	\begin{subfigure}[t]{0.63\textwidth}
		\centering
		\includegraphics[height=5.cm,width=\textwidth]{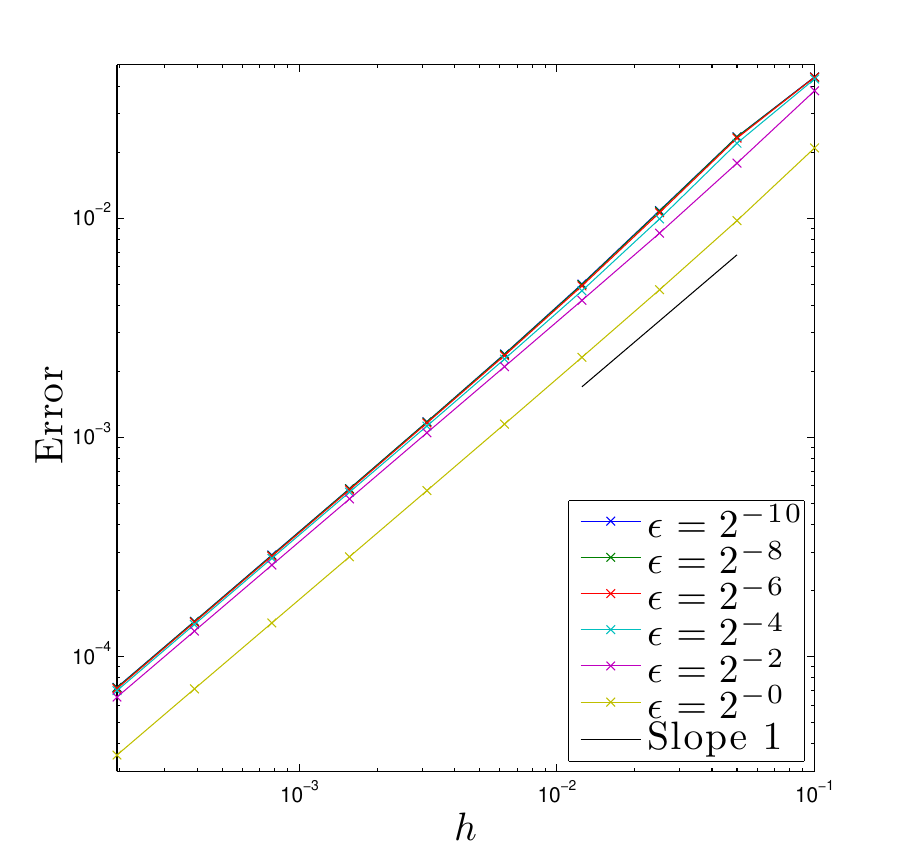}
		\caption{\label{fig:Tf1cvgtpsord2v1} $err_{\rho^\eps} (T_f = 0.2)$ w.r.t $h$, $N_x = 2^{8}$}
	\end{subfigure}
	\hspace{-1cm}
	\begin{subfigure}[t]{0.71\textwidth}
		\centering
		\includegraphics[height=5.cm,width=\textwidth]{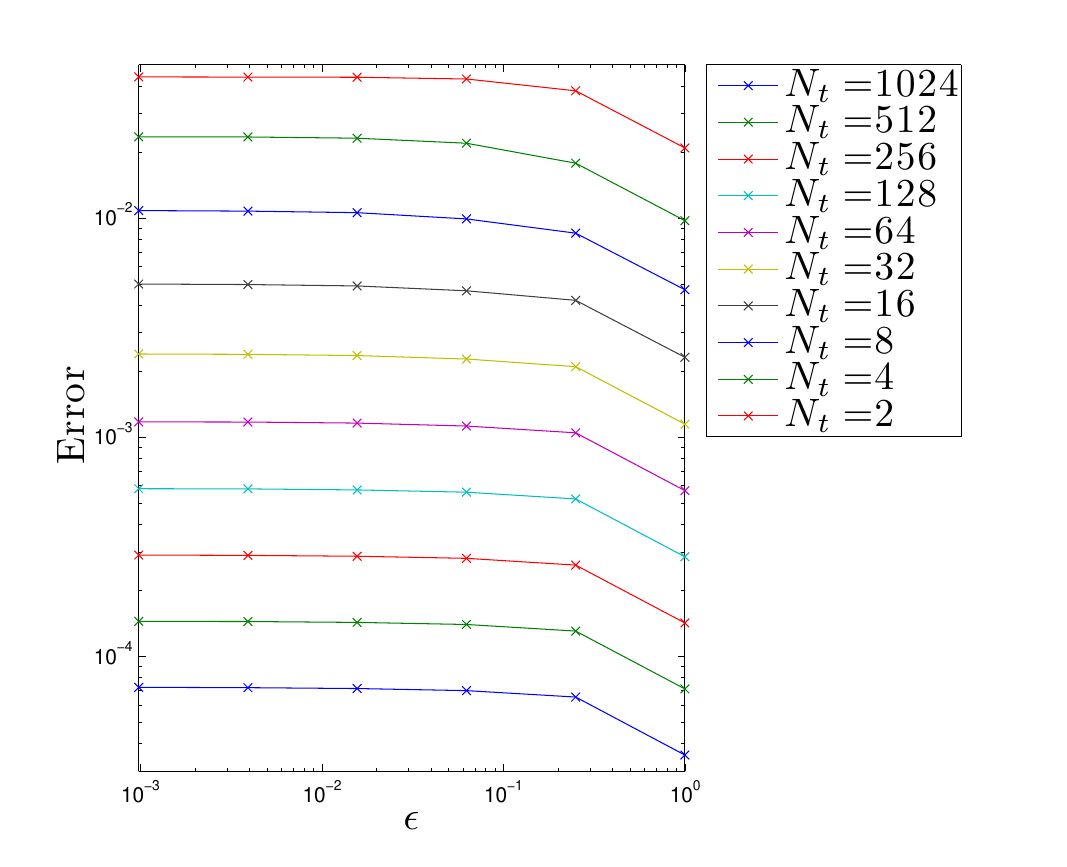}
		\caption{\label{fig:Tf1cvgtpsord2v2} $err_{\rho^\eps} (T_f = 0.2)$ w.r.t $\eps$, $N_x = 2^{8}$} 
	\end{subfigure}
	\hspace{-4cm}
	\caption{\label{fig:Tf1cvgtpsord2-1} Error on the density $\rho^\eps$ for the splitting scheme \eqref{scheme1} of order $1$ before the caustics: dependence on $\eps$ and on $h$.}
		\end{figure}
%	%
%	%
%	%
%	%	
		\begin{figure}[p]
	\centering
	\hspace{-4cm}
	\begin{subfigure}[t]{0.63\textwidth}
		\centering
		\includegraphics[height=5.cm,width=\textwidth]{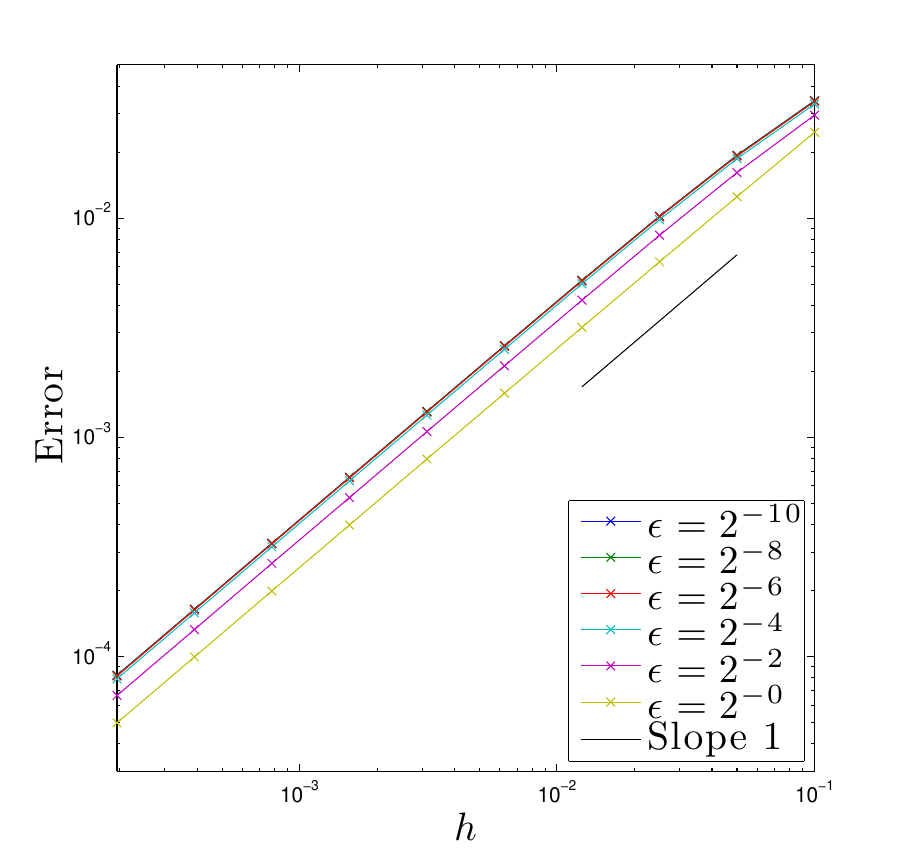}
		\caption{\label{fig:Tf1cvgtpsord2v3} $err_{(\se,\ae)} (T_f = 0.2)$ w.r.t $h$, $N_x = 2^{8}$}
	\end{subfigure}
	\hspace{-1cm}
	\begin{subfigure}[t]{0.71\textwidth}
		\centering
		\includegraphics[height=5.cm,width=\textwidth]{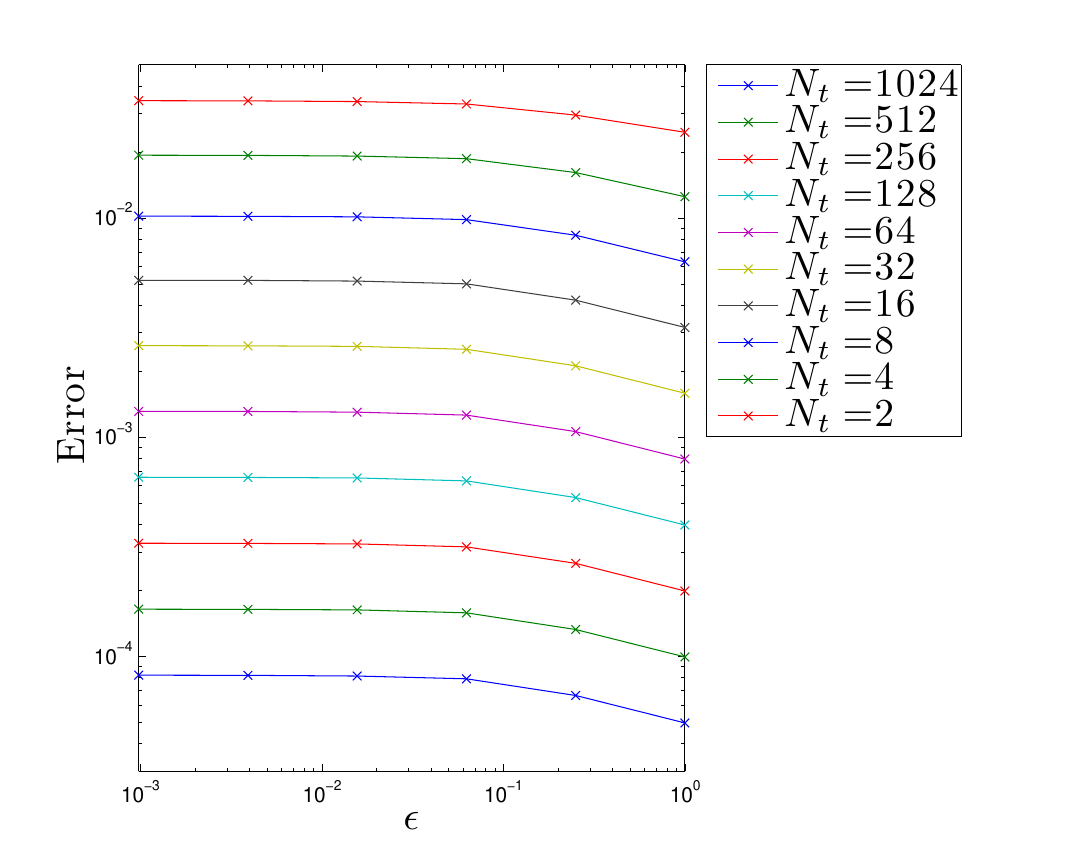}
		\caption{\label{fig:Tf1cvgtpsord2v4} $err_{(\se,\ae)} (T_f = 0.2)$ w.r.t $\eps$, $N_x = 2^{8}$} 
	\end{subfigure}
	\hspace{-4cm}
	\caption{\label{fig:Tf1cvgtpsord2-2} Error on $(\se,\ae)$ for the splitting scheme \eqref{scheme1} of order $1$ before the caustics: dependence on $\eps$ and on $h$.}
		\end{figure}
				\begin{figure}[p]
	\centering
	\hspace{-4cm}
	\begin{subfigure}[t]{0.67\textwidth}
		\centering
		\includegraphics[height=5.cm,width=\textwidth]{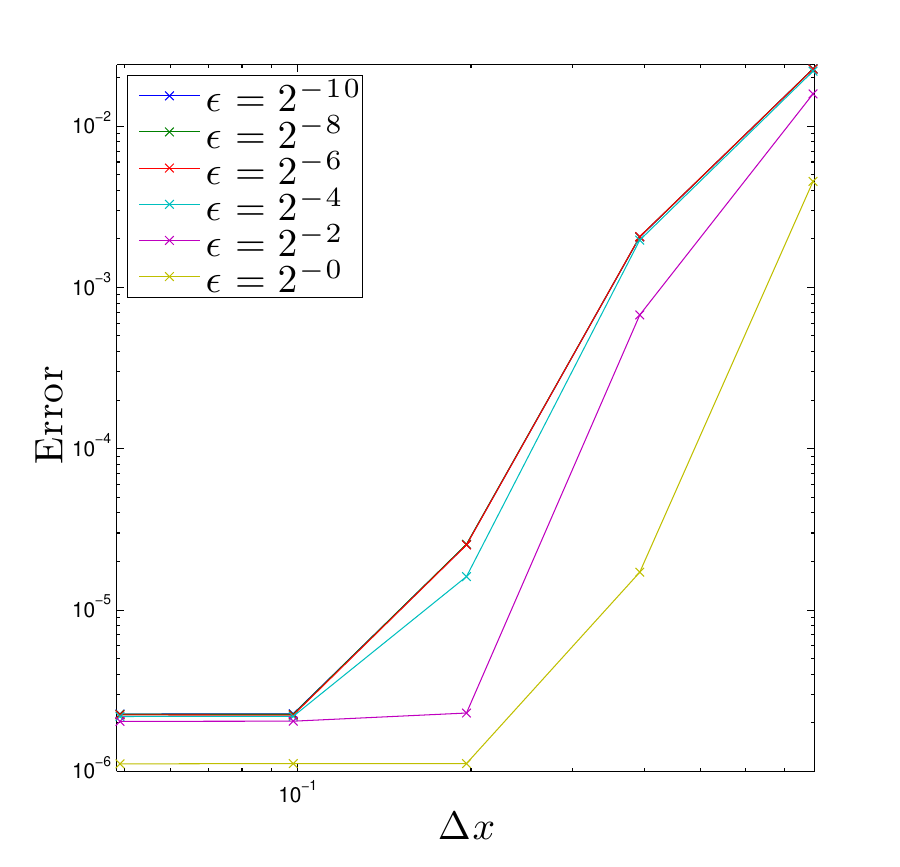}
		\caption{\label{fig:Tf1cvgdxord2v1} $err_{\rho^\eps} (T_f = 0.2)$ w.r.t $\Delta x$, $N_t = 2^{15}$}
	\end{subfigure}
	\hspace{-1cm}
	\begin{subfigure}[t]{0.67\textwidth}
		\centering
		\includegraphics[height=5.cm,width=\textwidth]{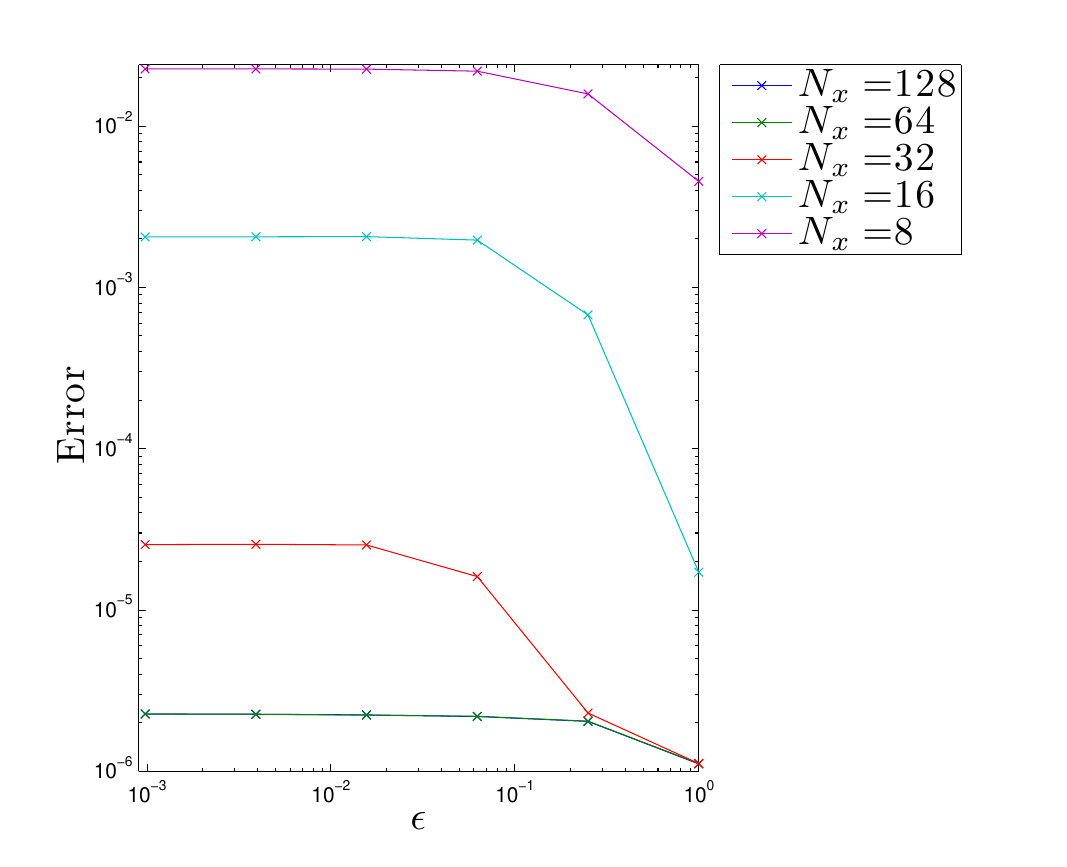}
		\caption{\label{fig:Tf1cvgdxord2v2} $err_{\rho^\eps} (T_f = 0.2)$ w.r.t $\eps$, $N_t = 2^{15}$} 
	\end{subfigure}
	\hspace{-4cm}
		\caption{\label{fig:Tf1cvgdxord2-1} Error on the density $\rho^\eps$ for the splitting scheme \eqref{scheme1} of order $1$ before the caustics: dependence on $\eps$ and on $\Delta x$.}

		\end{figure}
				\begin{figure}[p]
	\centering
	\hspace{-4cm}
	\begin{subfigure}[t]{0.67\textwidth}
		\centering
		\includegraphics[height=5.cm,width=\textwidth]{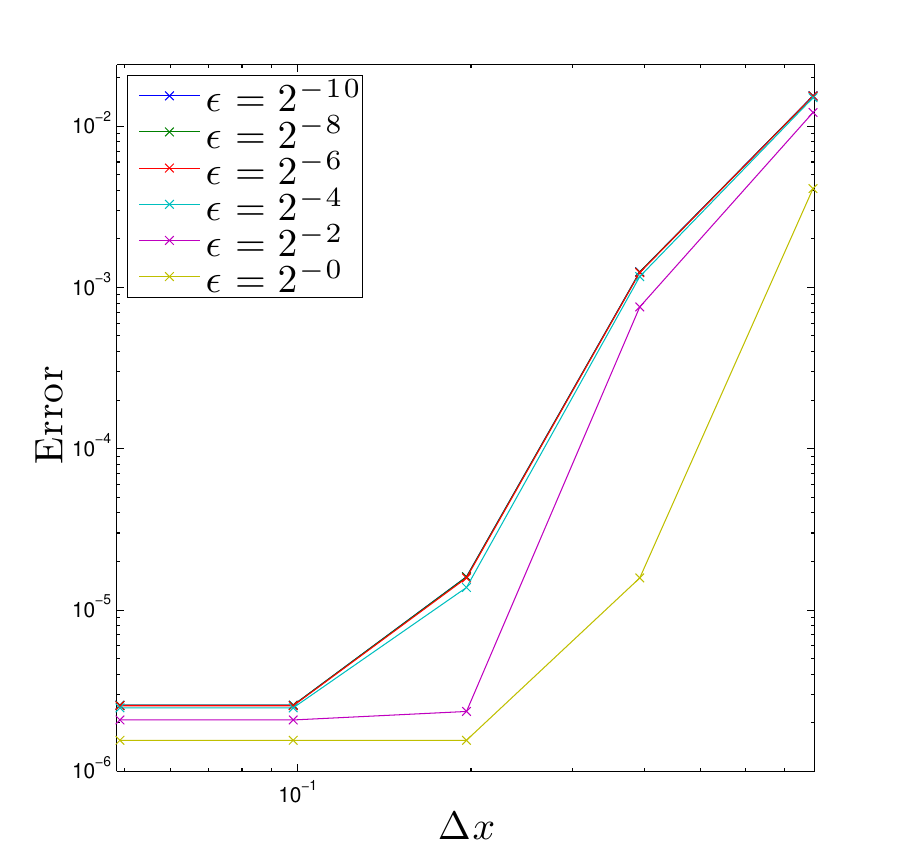}
		\caption{\label{fig:Tf1cvgdxord2v3} $err_{(\se,\ae)} (T_f = 0.2)$ w.r.t $\Delta x$, $N_t = 2^{15}$}
	\end{subfigure}
	\hspace{-1cm}
	\begin{subfigure}[t]{0.67\textwidth}
		\centering
		\includegraphics[height=5.cm,width=\textwidth]{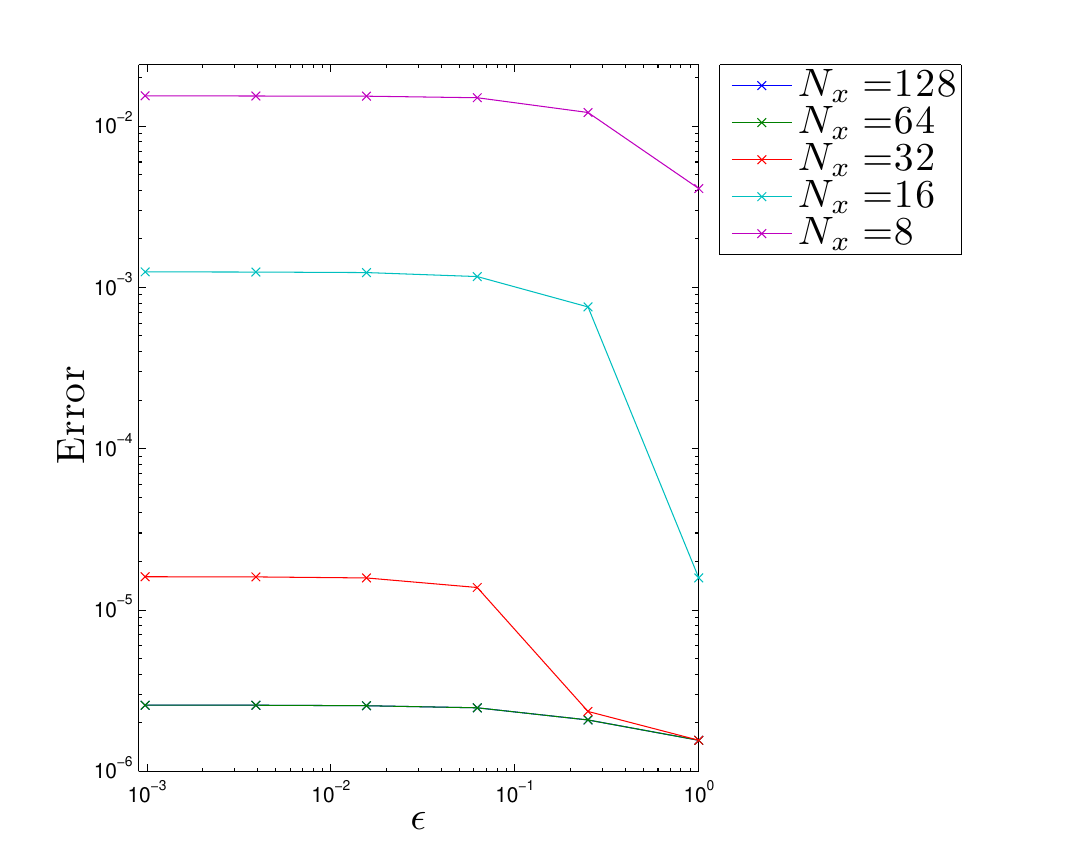}
		\caption{\label{fig:Tf1cvgdxord2v4} $err_{(\se,\ae)} (T_f = 0.2)$ w.r.t $\eps$, $N_t = 2^{15}$} 
	\end{subfigure}
	\hspace{-4cm}
	\caption{\label{fig:Tf1cvgdxord2-2} Error on $(\se,\ae)$ for the splitting scheme \eqref{scheme1} of order $1$ before the caustics: dependence on $\eps$ and on $\Delta x$.}		\end{figure}
	\begin{figure}[p]
	\centering	
		\hspace{-5cm}
		\begin{subfigure}[t]{0.63\textwidth}
		\centering
		\includegraphics[height=5.cm,width=\textwidth]{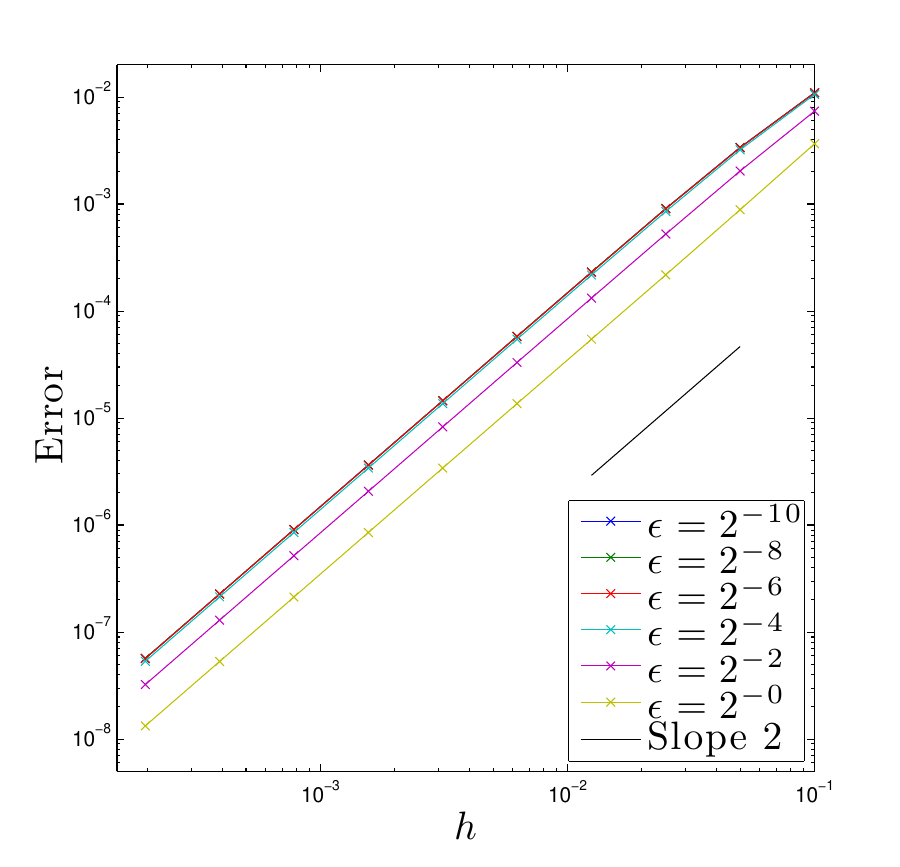}
		\caption{\label{fig:Tf1cvgtpsord4v1} $err_{\rho^\eps} (T_f = 0.2)$ w.r.t $h$, $N_x = 2^8$ } 
		\end{subfigure}
		\hspace{-1cm}
		\begin{subfigure}[t]{0.71\textwidth}
		\centering
		\includegraphics[height=5.cm,width=\textwidth]{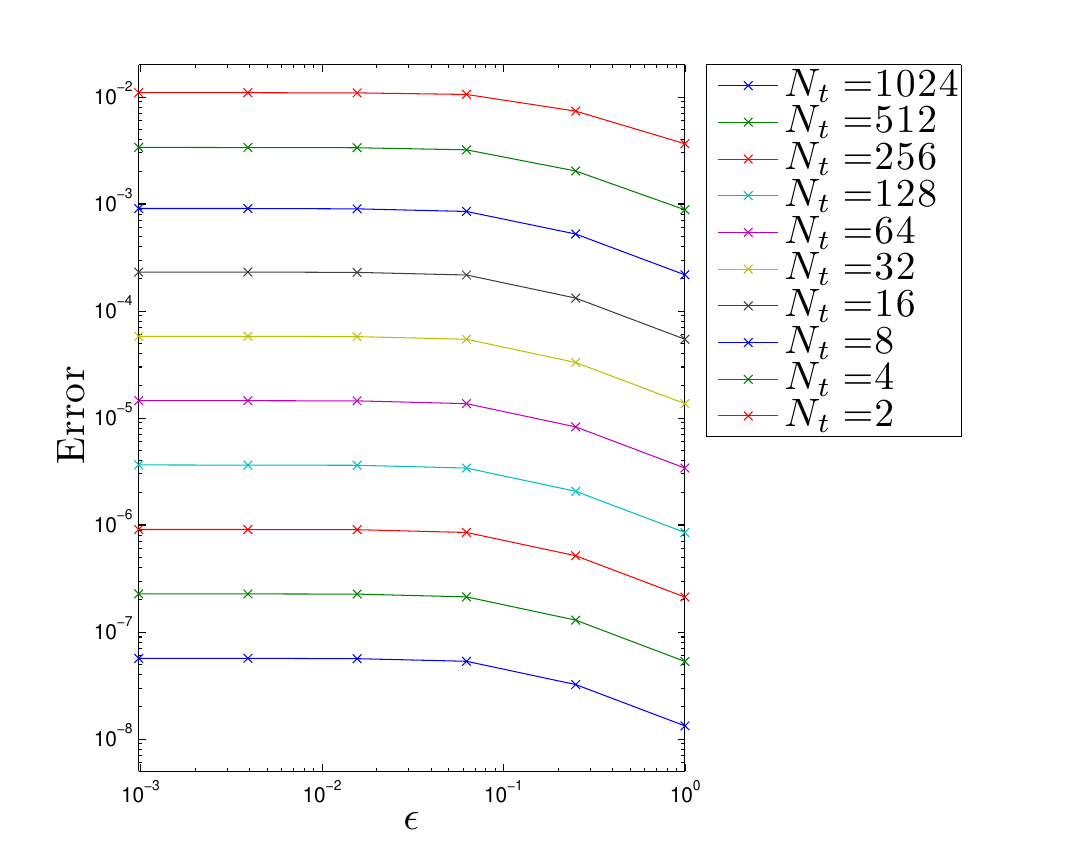}
		\caption{\label{fig:Tf1cvgtpsord4v2} $err_{\rho^\eps} (T_f = 0.2)$ w.r.t $\eps$, $N_x = 2^8$} 
		\end{subfigure}
		\hspace{-6cm}
		\caption{\label{fig:Tf1cvgtpsord4-1}Error on the density $\rho^\eps$ for the splitting scheme \eqref{scheme2} of order $2$ before the caustics: dependence on $\eps$ and on $h$.}

		\end{figure}
		
		\begin{figure}[p]
		\centering	
		\hspace{-5cm}
		\begin{subfigure}[t]{0.63\textwidth}
		\centering
		\includegraphics[height=5.cm,width=\textwidth]{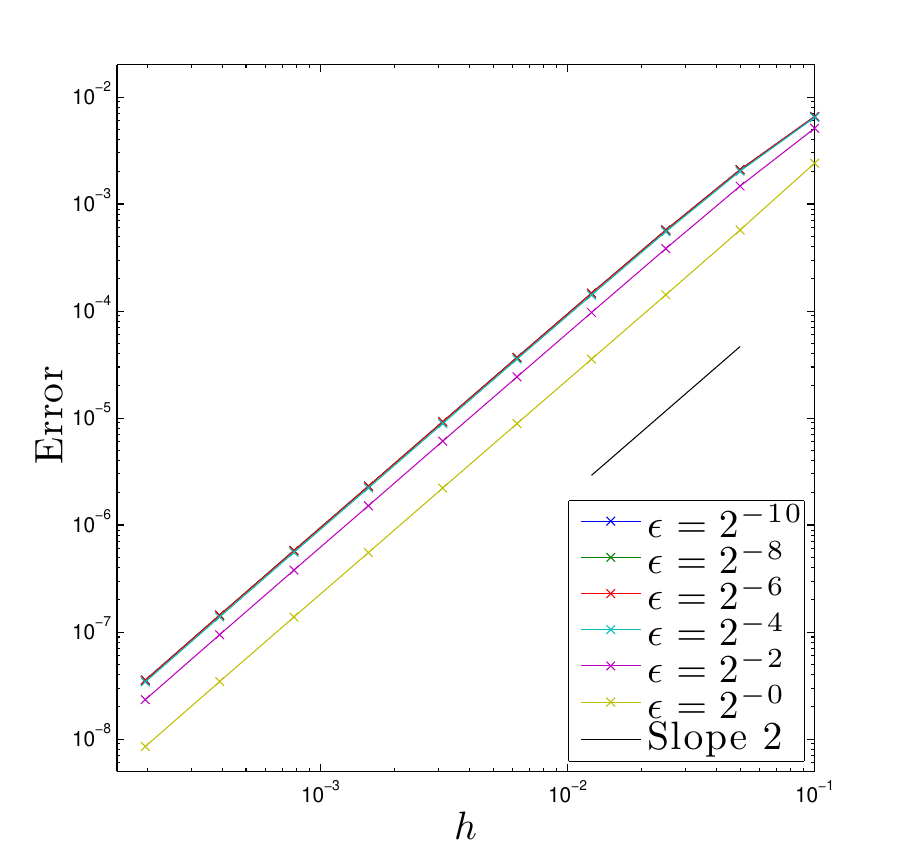}
		\caption{\label{fig:Tf1cvgtpsord4v3} $err_{(\se,\ae)} (T_f = 0.2)$ w.r.t $h$, $N_x = 2^8$ } 
		\end{subfigure}
		\hspace{-1cm}
		\begin{subfigure}[t]{0.71\textwidth}
		\centering
		\includegraphics[height=5.cm,width=\textwidth]{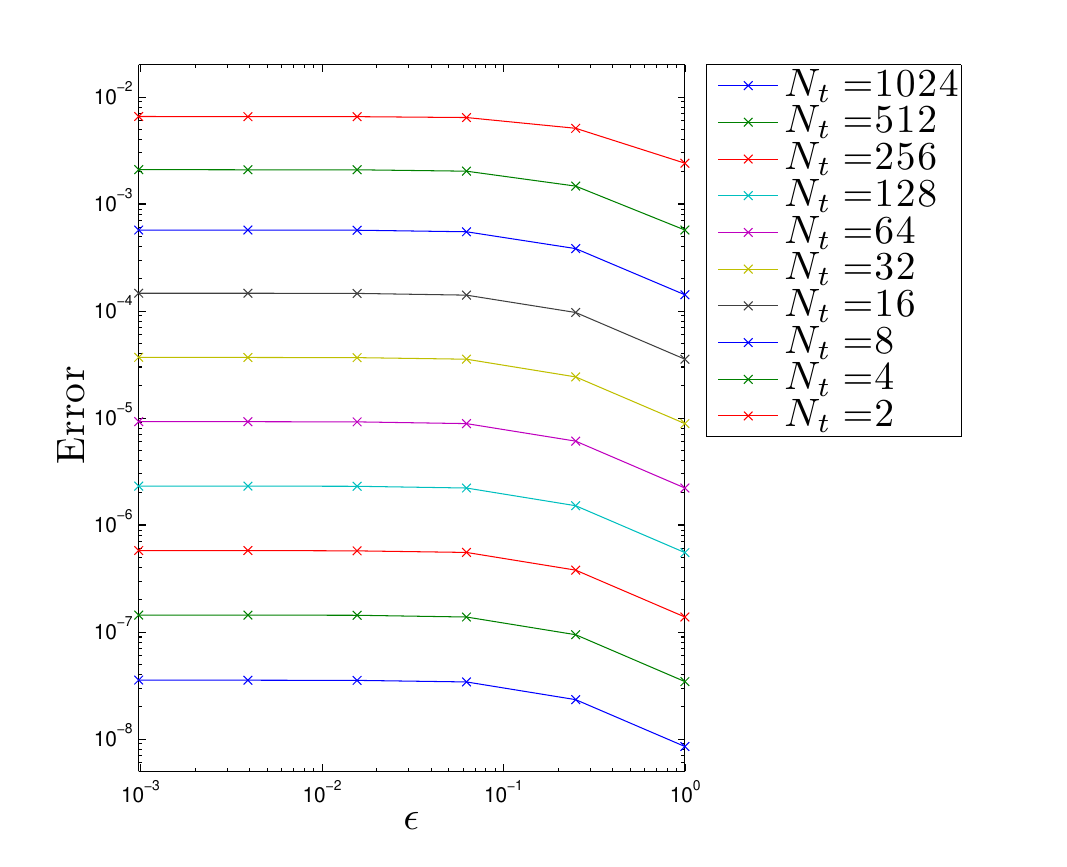}
		\caption{\label{fig:Tf1cvgtpsord4v4} $err_{(\se,\ae)} (T_f = 0.2)$ w.r.t $\eps$, $N_x = 2^8$} 
		\end{subfigure}
		\hspace{-6cm}
		\caption{ \label{fig:Tf1cvgtpsord4-2}Error on $(\se,\ae)$ for the splitting scheme \eqref{scheme2} of order $2$ before the caustics: dependence on $\eps$ and on $h$.}

		\end{figure}
				\begin{figure}[p]
		\centering	
		\hspace{-5cm}
		\begin{subfigure}[t]{0.67\textwidth}
		\centering
		\includegraphics[height=5.cm,width=\textwidth]{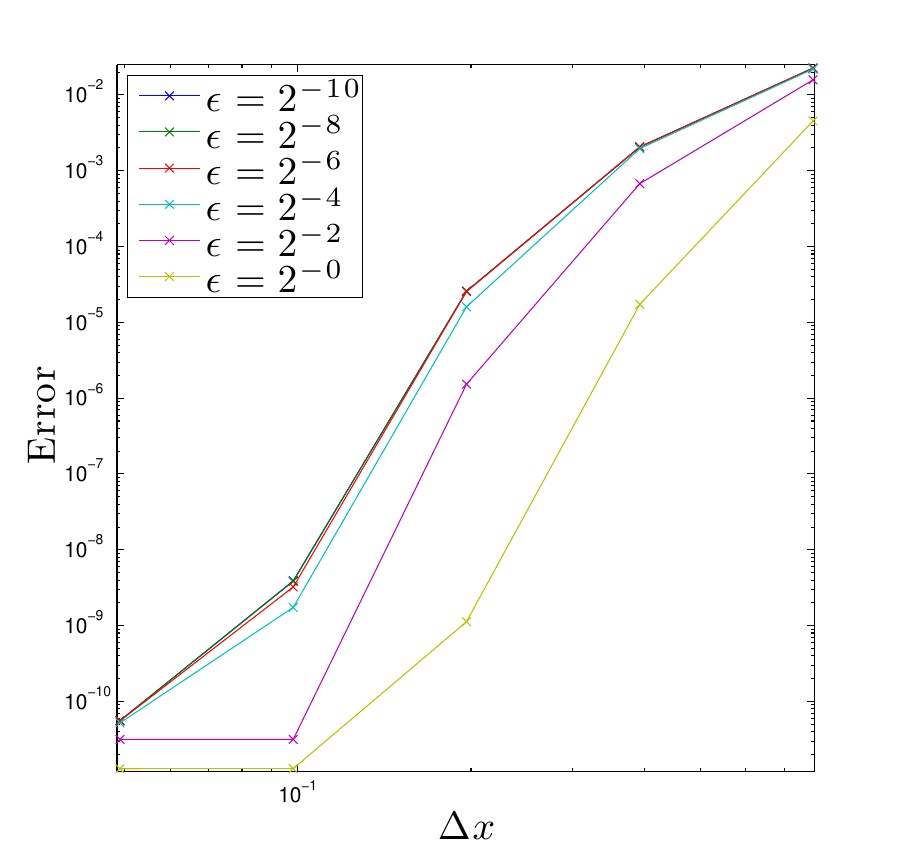}
		\caption{\label{fig:Tf1cvgdxord4v1} $err_{\rho^\eps} (T_f = 0.2)$ w.r.t $\Delta x$, $N_t = 2^{15}$} 
		\end{subfigure}
		\hspace{-1cm}
		\begin{subfigure}[t]{0.67\textwidth}
		\centering
		\includegraphics[height=5.cm,width=\textwidth]{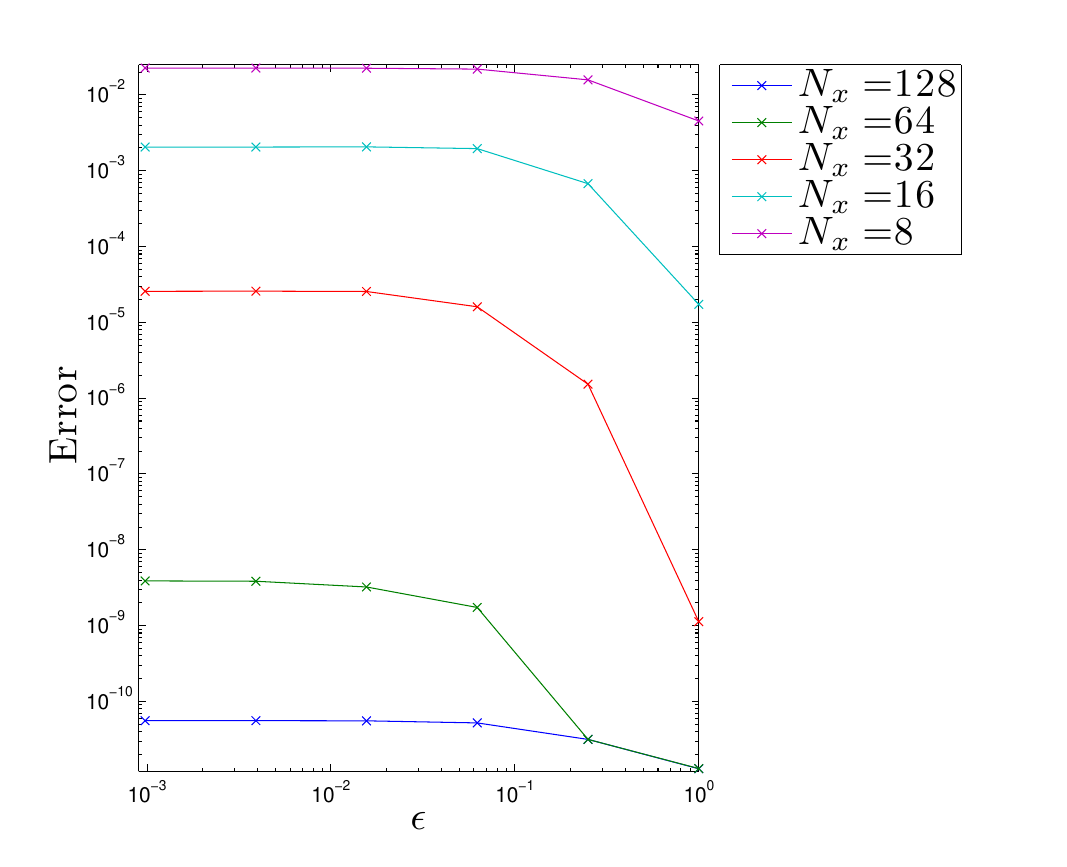}
		\caption{\label{fig:Tf1cvgdxord4v2} $err_{\rho^\eps} (T_f = 0.2)$ w.r.t $\eps$, $N_t = 2^{15}$} 
		\end{subfigure}
		\hspace{-6cm}
		\caption{\label{fig:Tf1cvgdxord4-1} Error on $\rho^\eps$ for the splitting scheme \eqref{scheme2} of order $2$ before the caustics: dependence on $\eps$ and on $\Delta x$.}
		\end{figure}
		
				\begin{figure}[p]
		\centering	
		\hspace{-5cm}
		\begin{subfigure}[t]{0.67\textwidth}
		\centering
		\includegraphics[height=5.cm,width=\textwidth]{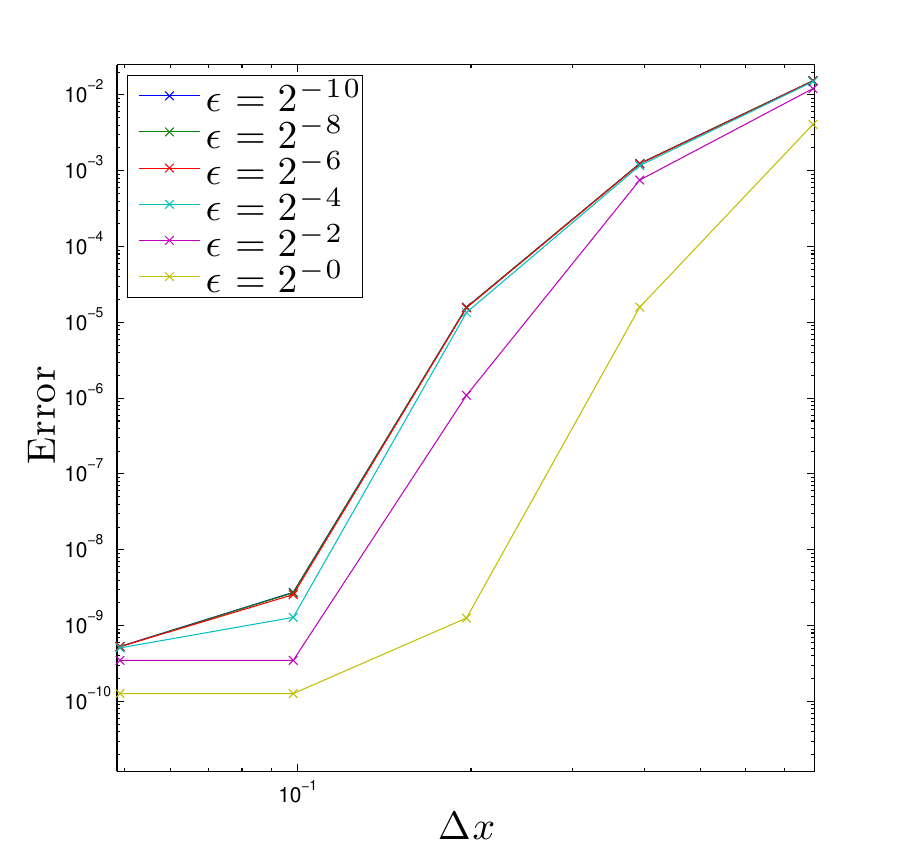}
		\caption{\label{fig:Tf1cvgdxord4v3} $err_{(\se,\ae)} (T_f = 0.2)$ w.r.t $\Delta x$, $N_t = 2^{15}$} 
		\end{subfigure}
		\hspace{-1cm}
		\begin{subfigure}[t]{0.67\textwidth}
		\centering
		\includegraphics[height=5.cm,width=\textwidth]{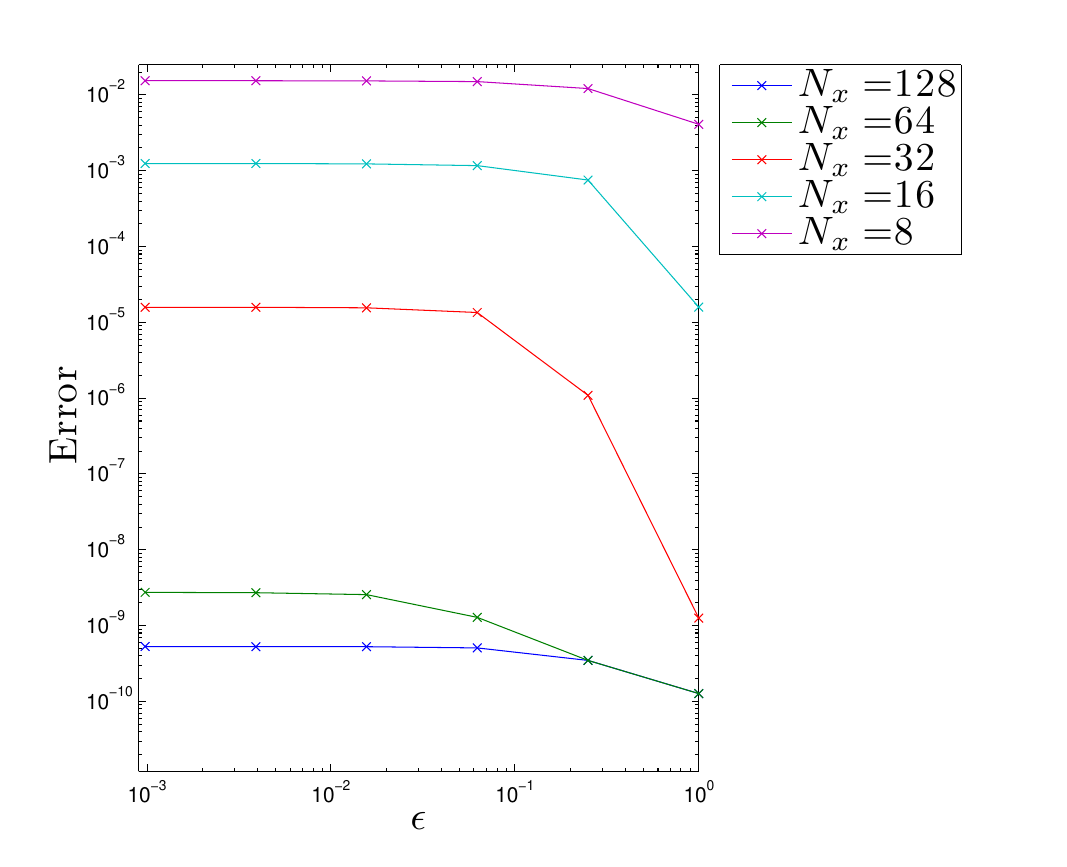}
		\caption{\label{fig:Tf1cvgdxord4v4} $err_{(\se,\ae)} (T_f = 0.2)$ w.r.t $\eps$, $N_t = 2^{15}$} 
		\end{subfigure}
		\hspace{-6cm}
				\caption{\label{fig:Tf1cvgdxord4-2}Error on $(\se,\ae)$ for the splitting scheme \eqref{scheme2} of order $2$ before the caustics: dependence on $\eps$ and on $\Delta x$.}

		\end{figure}
\begin{figure}[p]
	\centering	
		\hspace{-5cm}
		\begin{subfigure}[t]{0.63\textwidth}
		\centering
		\includegraphics[height=5.cm,width=\textwidth]{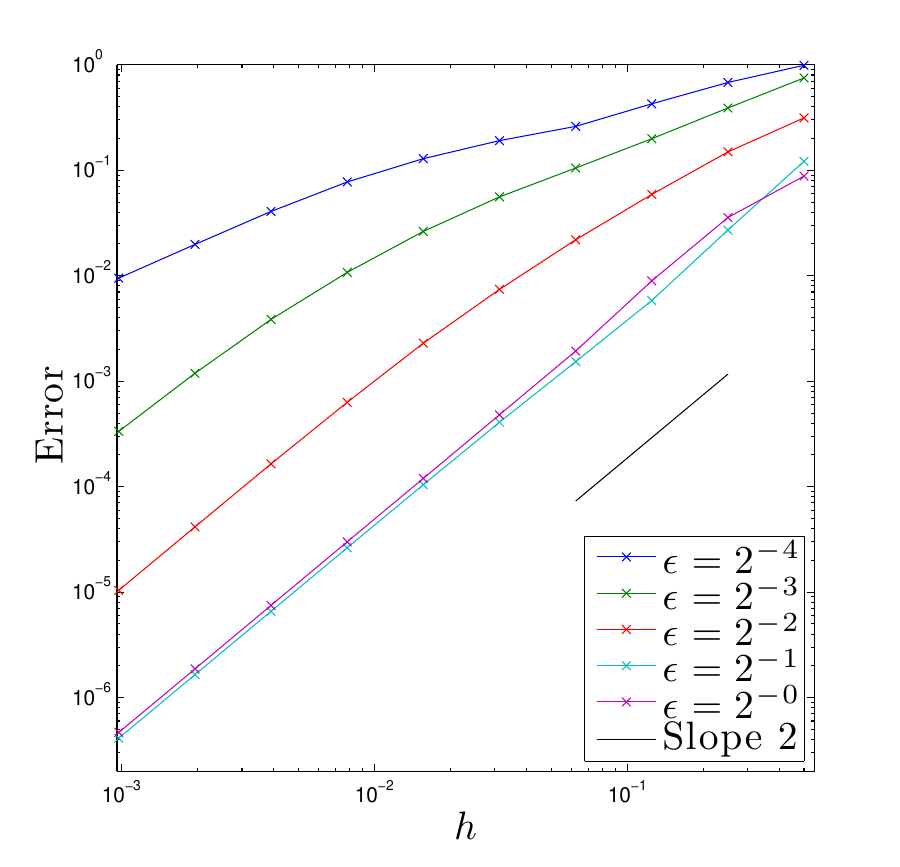}
		\caption{\label{fig:Tf6cvgtpsord4v1} $err_{\rho^\eps} (T_f = 1)$ w.r.t $h$, $N_x = 2^8$ } 
		\end{subfigure}
		\hspace{-1cm}
		\begin{subfigure}[t]{0.71\textwidth}
		\centering
		\includegraphics[height=5.cm,width=\textwidth]{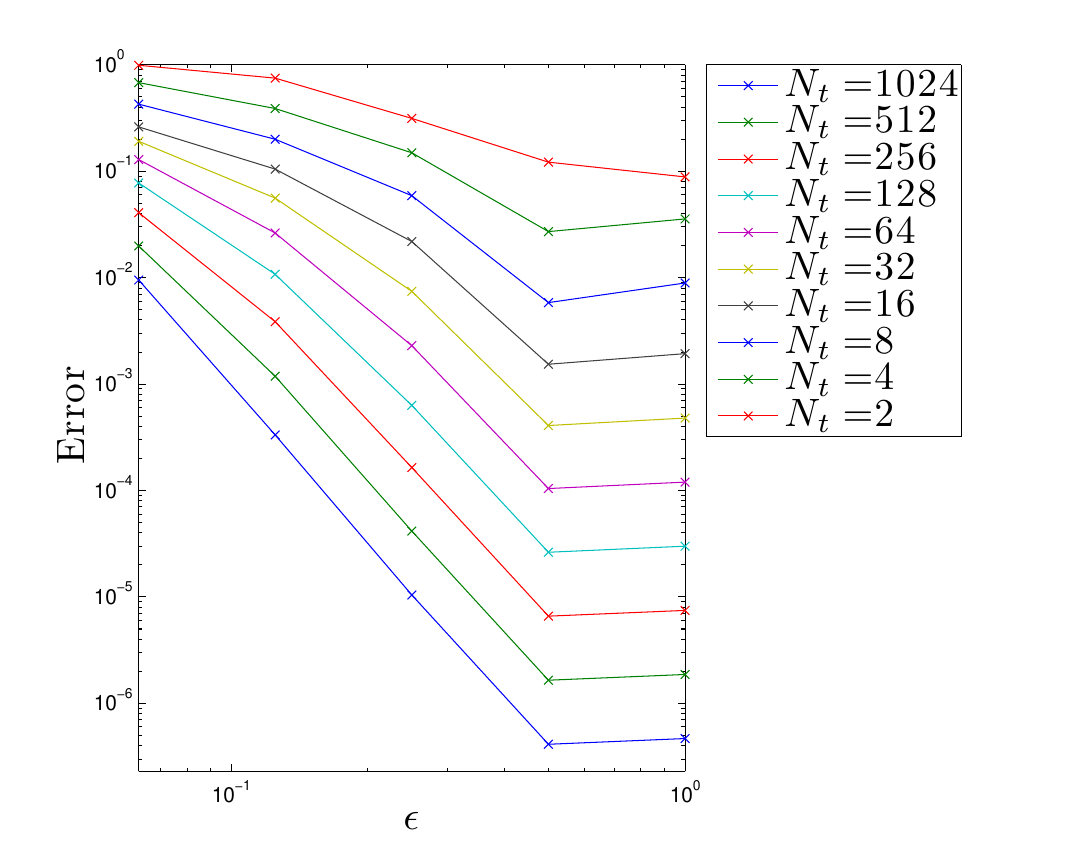}
		\caption{\label{fig:Tf6cvgtpsord4v2} $err_{\rho^\eps} (T_f = 1)$ w.r.t $\eps$, $N_x = 2^8$} 
		\end{subfigure}
		\hspace{-6cm}
		\caption{ \label{fig:Tf6cvgtpsord4}Error on $\rho^\eps$ for the splitting scheme \eqref{scheme2} of order $2$ after the caustics, dependence on $\eps$ and on $h$.}
	
		\end{figure}
		\begin{figure}[p]
		\centering	
		\hspace{-5cm}
		\begin{subfigure}[t]{0.71\textwidth}
		\centering
		\includegraphics[height=5.cm,width=\textwidth]{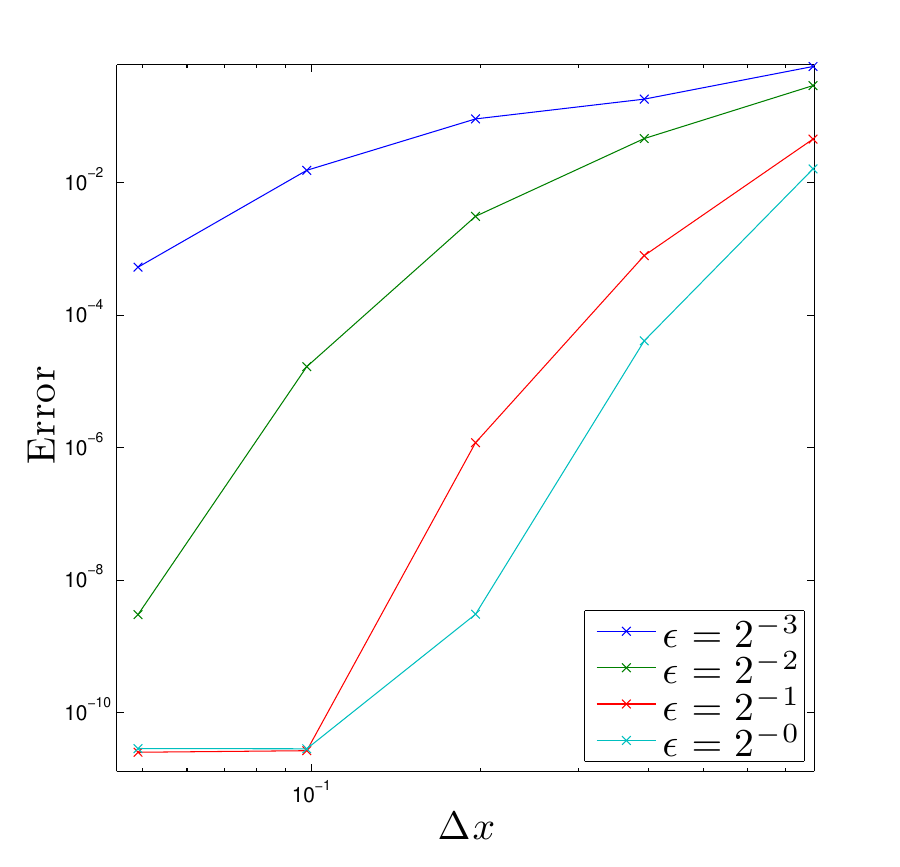}
		\caption{\label{fig:Tf6cvgdxord2v1} $err_{\rho^\eps} (T_f = 1)$ w.r.t $\Delta x$, $N_t = 2^{17}$  } 
		\end{subfigure}
		\hspace{-1cm}
		\begin{subfigure}[t]{0.63\textwidth}
		\centering
		\includegraphics[height=5.cm,width=\textwidth]{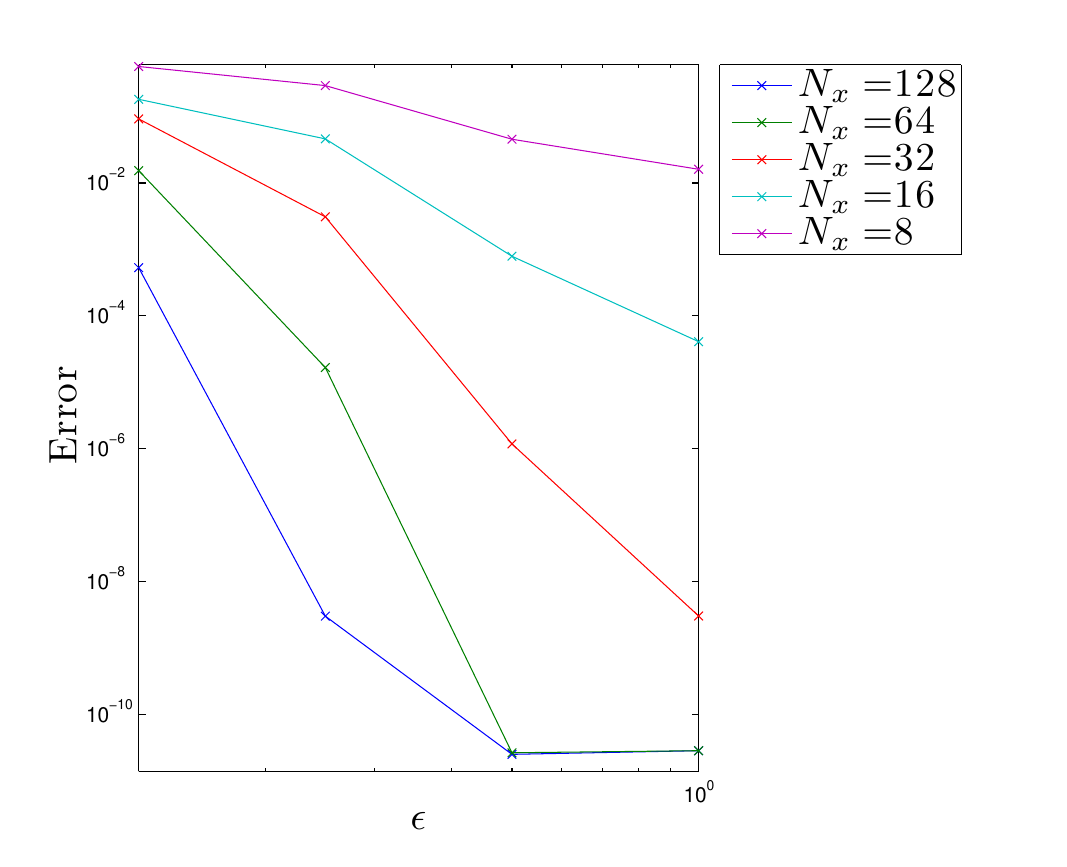}
		\caption{\label{fig:Tf6cvgdxord2v2} $err_{\rho^\eps} (T_f = 1)$ w.r.t $\eps$, $N_t = 2^{17}$ } 
		\end{subfigure}
		\hspace{-6cm}
		\caption{\label{fig:Tf6cvgdxord4}Error on $\rho^\eps$ for the splitting scheme \eqref{scheme2} of order $2$ after the caustics, dependence on $\eps$ and on $\Delta x$.}

		\end{figure}

 \end{document}